\newtheorem{theorem}{Theorem}
\newtheorem{definition}[theorem]{Definition}
\newtheorem{lemma}[theorem]{Lemma}
\newtheorem{proposition}[theorem]{Proposition}
\newtheorem{remark}[theorem]{Remark}
\newenvironment{proof}[1][Proof]{\noindent\textbf{#1.} }{\ \rule{0.5em}{0.5em}}
\begin{document}

\title{Analytic one-dimensional maps and two-dimensional ordinary differential
	equations can robustly simulate Turing machines}
\author{Daniel S. Gra\c{c}a\\Universidade do Algarve, C. Gambelas, 8005-139 Faro, Portugal\\\& Instituto de Telecomunica\c{c}\~{o}es, Lisbon, Portugal
	\and N. Zhong\\DMS, University of Cincinnati, Cincinnati, OH 45221-0025, U.S.A.}
\maketitle

\begin{abstract}
	In this paper, we analyze the problem of finding the minimum
	dimension $n$ such that a closed-form analytic map/ordinary
	differential equation can simulate a Turing machine over
	$\mathbb{R}^{n}$ in a way that is robust to perturbations. We show
	that one-dimensional closed-form analytic maps are sufficient to
	robustly simulate Turing machines; but the minimum dimension for the
	closed-form analytic ordinary differential equations to robustly
	simulate Turing machines is two, under some reasonable
	assumptions. We also show that any Turing machine can be simulated by a two-dimensional $C^{\infty}$
	ordinary differential equation on the
	compact sphere $\mathbb{S}^{2}$.
\end{abstract}

\section{Introduction}

As it is well known (see e.g.\ \cite{Sip12}), a Turing machine is a
mathematical model of computation that formalizes the notion of
algorithm/computation over discrete structures such as the set of
positive integers $\mathbb{N}$ or integers $\mathbb{Z}$. In
practice, Turing machines are computationally equivalent to a
standard digital computer. Furthermore, following the work of Turing
and others it is also known that some problems such as the Halting
Problem or Hilbert's 10th problem are \emph{noncomputable}, i.e.\
there is no Turing machine (i.e.\ no algorithm) that solves those
problems \cite{Tur36}, \cite{Mat93}. This remarkable result shows
that some problems are algorithmically unsolvable. Examples of other
nontrivial behavior regarding Turing machines include, for instance,
the existence of \emph{universal Turing machines}, which can
simulate the computation of any other Turing machine (see
\cite{Tur36} or e.g.\ \cite{Sip12}), or the existence of
self-reproducing Turing machines which output their own description
\cite{Mos10}.

Although the results above are considered classically over discrete
structures (e.g.\ $\mathbb{N}$), often they can be studied over
continuous spaces such as $\mathbb{R}^{n}$. The idea is to simulate
the computation of a Turing machine with a continuous map/flow. If a
continuous system is able to simulate any Turing machine (or,
equivalently, a universal Turing machine), then this system is
usually referred to as \emph{Turing universal}. A consequence of the
noncomputability of the Halting Problem is that the long term
behavior of Turing universal systems is highly complex (in a manner
distinct from behaviors considered e.g.\ in chaos theory) and has
some characteristics which are not computable (see e.g.\
\cite{Moo91}). However, in applications, it is often desirable that
Turing universal systems are relatively simple and mathematically
well-behaved so that they can be used in meaningful situations. For
this reason, one might be interested in having properties such as
low-dimensionality, reasonable smoothness, or robustness to
perturbations for Turing universal systems.

In this paper, we investigate the problem of determining the lowest
dimension $n$ such that the analytic maps/ODEs defined on
$\mathbb{R}^n$ can robustly simulate Turing machines.

It is well-known that piecewise affine and other types of maps and
ODEs can simulate Turing machines on $\mathbb{R}^{n}$, see
e.g.~\cite{Moo91}, \cite{KCG94}, \cite{Bra95}, \cite{Koi96},
\cite{Bou99}, \cite{KP05}, \cite{BC08}, \cite{BP18}). However, some
authors have claimed (see e.g.~\cite{MO97}, \cite{KM99},
\cite{AB01}, \cite{KM99}, \cite{BGH13}) that, when focusing on more
physically realistic systems simulating Turing machines, one might
desire other additional attributes such as robustness to noise (it
is known that the addition of noise to some classes of systems
reduces their computational power, see e.g.~\cite{MO97},
\cite{AB01}) or smoothness of the dynamics since most classical
physical systems are expressed with smooth (actually analytic)
functions. In \cite{KM99} the authors have shown that closed-form
analytic maps (i.e.~those analytic functions which can be expressed
in terms of elementary functions such as
polynomials, trigonometric functions, exponential and logarithmic
functions, their composition, etc.) are capable of simulating Turing
machines with exponential slowdown in dimension one or in real time
in dimension $\geq 2$. In \cite{GCB08} it was shown that, under a
certain notion of robustness (see Theorems \ref{Th:Main} and
\ref{Th:Simulation} below), the class of closed-form analytic maps on
$\mathbb{R}^{3}$ as well as the class of ODEs defined with analytic
closed-form functions in $\mathbb{R}^{6}$ can robustly simulate
Turing machines.

In the present paper, we show that one-dimensional closed-form
analytic maps can robustly simulate Turing machine on $\mathbb{R}$
(Theorem \ref{Th:one-dim-Turing-map}) in real time (i.e.~without the
exponential slowdown of \cite{KM99}; the simulation in \cite{KM99}
is not robust to perturbations either) and that two-dimensional
closed-form analytic ODEs can also robustly simulate Turing machines
(Theorem \ref{Th:two-dim-ODE-Turing}), both in the sense of
\cite{GCB08}. We also show that, under certain reasonable
assumptions, none of one-dimensional autonomous analytic ODEs can
simulate (robustly or not) a universal Turing machine.

Similar to what is done in \cite{CMP21}, we show that there is a
$C^{\infty
}$ ODE which can simulate Turing machines over the compact set $\mathbb{S}
^{n}=\{x\in\mathbb{R}^{n+1}:\left\Vert x\right\Vert =1\}$ for
$n\geq2$. The difference between our result and the result of
\cite{CMP21} is that, although the flow of \cite{CMP21} is
mathematically simpler (it is polynomial), Turing universality is
only achieved over $\mathbb{S}^{n}$ for $n\geq17$. The authors then
use this polynomial flow to construct a (Euler) partial differential
equation which is Turing universal. We also note that in
\cite{CMPP21} the authors proved that there are Turing complete
(stationary Euler) fluid-flows on a Riemannian 3-dimensional sphere.
The difference of the later result from the one presented in this
paper is that we use ODEs instead of partial differential equations,
and our results are for $\mathbb{S}^{2}$ instead of
$\mathbb{S}^{3}$.

The outline of the paper is as follows. In Section
\ref{Sec:Dimension3} we review the construction presented in
\cite{GCB08} to create analytic maps on $\mathbb{R}^{3}$ which can
robustly simulate Turing machines. In Section \ref{Sec:auxiliary} we
present some auxiliary functions. Building on these results, we show
in Section \ref{Sec:Turing-map} that one-dimensional analytic maps
can robustly simulate Turing machines. By iterating these maps with
ODEs, we are able to show in Section \ref{Sec:ODE-simulation} that
two-dimensional ODEs can robustly simulate Turing machines. In
Section \ref{Sec:Compact}, we construct a $C^{\infty}$ ODE that can
simulate Turing machines over the compact set $\mathbb{S}^{2}$.
Finally, in Section \ref{Sec:ODE-one-dimension} we show that under
reasonable hypothesis, no one-dimensional analytic ODE can simulate
a universal Turing machine.

\section{Simulating Turing machines in dimension three\label{Sec:Dimension3}}

In this section, we review several results from \cite{GCB08} which
are useful for proving our main results.

We first recall some basic results from computability theory (see
e.g. \cite{Sip12}). Given a finite set $\Sigma$ (the
\emph{alphabet}), a \emph{word} over $\Sigma$ is a finite sequence
$w=(w_{1},...,w_{k})\in \Sigma^{k}$ for some $k\in\mathbb{N}_{0}$
($k$ is the length of the word), where $\mathbb{N}_0$ is the set of
all non-negative integers. Note that there is a special sequence,
represented by $\epsilon$, which denotes the word of length $0$. As
usual, for notational simplicity, we will denote the word
$w=(w_{1},...,w_{k})$ simply as $w=w_{1}...w_{k}$. The set of all
words over $\Sigma$ is denoted by $\Sigma^{\ast}$. We also recall
that a Turing machine is a discrete dynamical system defined by the
iteration of a map, although it is usually viewed as a finite-state
machine since this approach is often more convenient. More
specifically, let $\Sigma$ be an alphabet, and take some symbol
$B\notin\Sigma$, which is usually known as the \textit{blank
symbol}, and let $Q$ be a finite set known as the set of
\textit{states} with some special elements $q_{0},q_{h}\in Q$,
called the initial state and the final state, respectively. Then a
Turing machine $M$ is a map
$F_{M}:\Sigma^{\ast}\times\Sigma^{\ast}\times
Q\rightarrow\Sigma^{\ast }\times\Sigma^{\ast}\times Q$ that works as
follows when viewed as a machine. It has a bi-infinite tape, divided
into \emph{cells}, and a head which is
associated to some state of $Q$. Given some $(u,v,q)\in\Sigma^{\ast}
\times\Sigma^{\ast}\times Q$ (the \emph{configuration} of the Turing machine),
where $u=u_{1}u_{2}\ldots u_{n}$ and $v=v_{1}v_{2}\ldots v_{p}$, then the tape
contents of the Turing machine at this configuration is
\begin{equation}
...B\,B\,B\,v_{p}...\,v_{2}v_{1}\,u_{1}u_{2}\ldots u_{n}\,B\,B\,B...,
\label{Eq:config}
\end{equation}
while its associated state is $q$. In this case the Turing machine is also
said to be reading symbol $v_{1}$. Then, \textit{depending only on the value
of }the current state and of the symbol being read by the head, the machine
simultaneously (i) updates its state, (ii) updates the symbol being read by
the head and (iii) either moves the head one cell to the right, one cell to
the left, or maintains the head on the same position.

A Turing machine $M$ computes a function $f:\Sigma^{\ast}\rightarrow
\Sigma^{\ast}$ as follows. Given a word $w$ it starts its
computation on the initial configuration $(w,\epsilon,q_{0})$, i.e.
in the initial configuration the state is the initial state and the
tape contains the input $w$ only. Then $M$ proceeds with the
computation until it reaches the halting state $q_{h}$. In this case
we say that the Turing machine has \emph{halted} with configuration
$(u_{h},v_{h},q_{h})\in\Sigma^{\ast}\times\Sigma^{\ast}\times Q$. In
this case its output will be $u_{h}$, i.e. $u_{h}=f(w)$. If $M$ does
not halt with input $w$, then $f(w)$ is undefined.

Given some Turing machine $M$ as described above, let $k=1+\#\Sigma$ and take
an injective map $\gamma:\Sigma\rightarrow\{0,1,2,\ldots,k-1\}$ with
$\gamma(B)=0$. Let $(u,v,q)$ be the current configuration of $M$ and let us
further assume that $M$ has $m$ states, represented by the numbers
$1,\ldots,m$, and that if $M$ reaches an halting configuration, then it moves
to the same configuration (i.e.$\ F_{M}(u_{h},v_{h},q_{h})=(u_{h},v_{h}
,q_{h})$). Take
\begin{equation}
\begin{array}
[c]{l}
y_{1}=\gamma(u_{1})+\gamma(u_{2})k+\ldots+\gamma(u_{n})k^{n-1}\\
y_{2}=\gamma(v_{1})+\gamma(v_{2})k+\ldots+\gamma(v_{p})k^{p-1}
\end{array}
\label{Eq:ConfigEncoding}
\end{equation}
and suppose that $q$ is the state associated to the current
configuration. Then $(y_{1},y_{2},q)\in\mathbb{N}^{3}$ encodes
unambiguously the current configuration of $M$. Under these
assumptions, the transition function of $M$ can be encoded as a
function $\bar{f}_{M}:\mathbb{N}^{3}\rightarrow \mathbb{N}^{3}$. In
\cite{GCB08} it was shown that $\bar{f}_{M}$ can be extended to a
function $f_{M}:\mathbb{R}^{3}\rightarrow\mathbb{R}^{3}$, which has
the following properties: (i) it is capable of simulating $M$ in the
presence of perturbations; (ii) the function $f$ is analytic, and
each of its components can be expressed using only the following
terms: variables, polynomial-time computable constants (see Remark
\ref{Remark:computable} for a definition), $+$, $-$, $\times$,
$\sin$, $\cos$, $\arctan$. The precise statement of this result is
given below, where $\left\Vert f\right\Vert =\sup_{x\in A}\left\Vert
f(x)\right\Vert $ for a function $f:A\subseteq
\mathbb{R}^{l}\rightarrow\mathbb{R}^{j}$, $\left\Vert y\right\Vert
=\max_{1\leq i\leq j}\left\vert y_{i}\right\vert $ for
$y=(y_{1},\ldots ,y_{j})\in\mathbb{R}^{j}$, and $f^{[k]}$ denotes
the $k$th iterate of the function $f:A\rightarrow A$, which is
defined as follows: $f^{[0]}(x)=x$, $f^{[k+1]}(x)=f^{[k]}(f(x))$.

\begin{theorem}
[{\cite[p. 333]{GCB08}}]\label{Th:Main}Let $\psi:\mathbb{N}^{3}\rightarrow
\mathbb{N}^{3}$ be the transition function of a Turing machine $M$ under the
encoding described above, and let $0<\delta<\varepsilon<1/2$. Then $\psi$
admits a globally analytic closed-form extension $f_{M}:\mathbb{R}
^{3}\rightarrow\mathbb{R}^{3}$ such that the expression of each component of
$f_{M}$ can be written using only the following terms: variables,
polynomial-time computable constants, $+$, $-$, $\times$, $\sin$, $\cos$,
$\arctan$. Moreover, $f_{M}$ is robust to perturbations in the following
sense: for all $f$ such that $\left\Vert f-f_{M}\right\Vert \leq\delta$, for
all $j\in\mathbb{N}$, and for all $\bar{x}_{0}\in\mathbb{R}^{3}$ satisfying
$\left\Vert \bar{x}_{0}-x_{0}\right\Vert \leq\varepsilon,$ where $x_{0}
\in\mathbb{N}^{3}$ represents a configuration according to the encoding
described above,
\[
\left\Vert f^{[j]}(\bar{x}_{0})-\psi^{\lbrack j]}(x_{0})\right\Vert
\leq\varepsilon.
\]

\end{theorem}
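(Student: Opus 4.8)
The plan is to reduce the statement to a single quantitative one-step estimate for $f_M$ together with a short induction on $j$, and to build $f_M$ by composing a few closed-form analytic ``error-correcting'' gadgets. Concretely, I would aim to construct $f_M$ so that
\[
\|f_M(z)-\psi(c)\|\le\varepsilon-\delta\qquad\text{whenever }c\in\mathbb{N}^3\text{ encodes a configuration and }\|z-c\|\le\varepsilon .
\]
Granting this, the theorem is immediate by induction on $j$: the case $j=0$ is the hypothesis on $\bar x_0$, and if $\|f^{[j]}(\bar x_0)-\psi^{[j]}(x_0)\|\le\varepsilon$ then, setting $z=f^{[j]}(\bar x_0)$ and $c=\psi^{[j]}(x_0)$ (an integer configuration), we get $\|f^{[j+1]}(\bar x_0)-\psi^{[j+1]}(x_0)\|\le\|f(z)-f_M(z)\|+\|f_M(z)-\psi(c)\|\le\delta+(\varepsilon-\delta)=\varepsilon$. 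So the entire content is the construction of $f_M$ and the verification of the displayed estimate.

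For the construction I would first note that one step of $M$ uses only a bounded amount of information: the state $q\in\{1,\dots,m\}$ and the scanned symbol, which under \eqref{Eq:ConfigEncoding} is the digit $r=y_2\bmod k$; and that in each of the finitely many cases (the pair $(q,r)$, refined by $y_1\bmod k$ when a head move needs it) the update of $(y_1,y_2,q)$ is one fixed affine rule over $\mathbb{N}$ --- writing the new symbol (digit $r'$) sends $y_2$ to $y_2-r+r'$, a head move sends $(y_1,y_2)$ to something like $(ky_1+r',\lfloor y_2/k\rfloor)$ or its mirror, and $q$ is replaced by a constant. So $f_M$ should round the incoming coordinates to genuine integers, decide which case applies, and apply the matching affine rule. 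For the rounding I would take $\sigma=g^{\circ N}$ with $g(x)=x-\frac1{2\pi}\sin(2\pi x)$: one checks that $g(n)=n$ for every $n\in\mathbb{Z}$ and $0<g(\xi)<\xi$ for $\xi\in(0,\tfrac12)$, so iterating $g$ a number $N$ of times that depends only on the margin $\tfrac12-\varepsilon$ and on a target accuracy $\eta$ pulls every point within $\varepsilon$ of an integer to within $\eta$ of it, uniformly over all integers by periodicity. For the case split I would use exact selectors: a residue selector $\chi_r(x)=\prod_{j=1}^{k-1}\frac{\cos(\frac{2\pi}{k}(x-r))-\cos\frac{2\pi j}{k}}{1-\cos\frac{2\pi j}{k}}$, which is a polynomial in $\sin\frac{2\pi}{k}x$ and $\cos\frac{2\pi}{k}x$ equal to $1$ at integers $\equiv r\ (\mathrm{mod}\ k)$ and to $0$ at the other integers, together with a Lagrange-type polynomial selector $\chi^Q_q$ in $q$ (harmless, since $q$ ranges over a bounded set); then $f_M$ is a sum, over the cases, of terms $\chi^Q_q\,\chi_r\,\chi_{r_1}\,\Phi$, each $\Phi$ being the corresponding affine rule composed with $\sigma$. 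Every ingredient is a finite combination of polynomials, $\sin$ and $\cos$ (the statement also allows $\arctan$), is analytic and pole-free on all of $\mathbb{R}^3$, and the constants that occur ($\pi$, $1/k$, the table entries, $N$) are polynomial-time computable, so the term restriction in the statement is met; and since $\sigma$, $\chi_r$, $\chi^Q_q$ and $\Phi$ are exact on integers, $f_M$ genuinely extends $\psi$.

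To verify the displayed estimate, I would observe that after the initial $\sigma$-stage each coordinate of $z$ lies within $\eta$ of the corresponding coordinate of the configuration $c$ (this is where $\varepsilon<\tfrac12$ is used), so the selectors evaluate to within $O(\eta)$ of their exact $0/1$ values, and hence $f_M(z)$ equals $\Phi(c)$ plus an error bounded by a constant --- depending on $k$, $m$ and the fixed number of arithmetic operations, not on $z$ --- times $\eta$. Choosing $N$ large enough makes this total error smaller than the fixed positive number $\varepsilon-\delta$, and since $\Phi(c)=\psi(c)$ the estimate follows. The step I expect to be the main obstacle is exactly the uniform error control behind the $\sigma$-stage: manufacturing a closed-form analytic, pole-free rounding whose accuracy is genuinely uniform in the integer being approximated, despite the fact that a head move multiplies a coordinate by $k$ while $y_1$ and $y_2$ are unbounded. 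That amplification is precisely what forces the rounding to be strong enough, and arranging it to hold simultaneously near every integer --- rather than merely on a compact range --- is where the care goes; the finite case analysis of the transition table, the selectors on bounded ranges, and the final induction are then routine bookkeeping.
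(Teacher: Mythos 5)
The paper itself does not reprove this theorem --- it is quoted from \cite{GCB08} --- so your proposal can only be compared with that construction, and in spirit it is the same: round the coordinates with a trigonometric error-correcting map, select the active transition by interpolating the finite transition table (residue selectors in $\sin,\cos$ plus a Lagrange selector in the state), apply the corresponding affine update, and absorb the $\delta$-perturbation through a one-step estimate $\left\Vert f_{M}(z)-\psi(c)\right\Vert \leq\varepsilon-\delta$ followed by induction on $j$. The induction, the exactness of the extension on $\mathbb{N}^{3}$, and the closed-form/polynomial-time-constant requirements are all handled correctly.

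The gap is in the quantitative verification of the one-step estimate. You claim that after a fixed number $N$ of rounding iterations the output error is a constant (depending only on $k$, $m$ and the number of operations, ``not on $z$'') times $\eta$. That is false for the construction as you describe it: $f_{M}$ is a sum over cases of terms $\chi_{q}^{Q}\,\chi_{r}\,\chi_{r_{1}}\,\Phi$, and for the \emph{inactive} cases the selector factors are only $O(\eta)$ rather than exactly $0$, while the factors $\Phi(z)$ they multiply are of size comparable to $\left\Vert z\right\Vert$, i.e.\ unbounded over the set of configurations (the amplification by $k$ in a head move, which you single out, is a harmless constant factor; the real problem is the product ``small selector error times arbitrarily large $\Phi$''). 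Hence the one-step error is of order $\eta\left\Vert c\right\Vert$, and no fixed $N$ (fixed $\eta$) yields a bound $\leq\varepsilon-\delta$ uniformly over all configurations $x_{0}\in\mathbb{N}^{3}$, which is exactly what the theorem demands. The standard repair, used in \cite{GCB08} and mirrored in this paper's own auxiliary machinery, is to make the error-correction strength depend on the magnitude of the values being processed --- compare $\Psi(x,y)$ of Lemma \ref{Lemma:Psi}, whose correction factor $e^{-y}$ is invoked with $y$ a polynomial in $\left\Vert x\right\Vert _{2}^{2}$ in Proposition \ref{Prop:many-to-one} --- so that the residual rounding/selector error, multiplied by the polynomially large quantities appearing in the update, stays uniformly small. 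Your construction needs this magnitude-dependent precision (or an equivalent device) inserted before the selectors and multiplications; with that modification the rest of your argument goes through.
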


We note that the proof of this theorem is constructive and that
$f_M$ can be obtained explicitly. A continuous-time version of
Theorem \ref{Th:Main} was also proved in \cite{GCB08}.

\begin{remark}
\label{Remark:computable}We note that we can define computable real
constants and computable real functions using the approach of
computable analysis. These notions can be presented in several
equivalent but different ways. For example, according to the
approach presented in \cite{Ko91} (see also \cite{BHW08}), a number
$c\in\mathbb{R}$ is computable if there is a Turing machine $M$
that, on input $n\in\mathbb{N}$, outputs (in finite time) a rational
$q_{n}$ with the property that $\left\vert q_{n}-c\right\vert
\leq2^{-n}$. If the Turing machine $M$ runs in polynomial time (in
$n$), then we say that $c$ is computable in polynomial time.
Similarly, a function $f:\mathbb{R\rightarrow R}$ is computable if
there is an oracle Turing machine $M$ that computes $f(x)$ in the
sense that, given as input $n\in \mathbb{N}$ and any oracle
$\varphi: \mathbb{N}\to \mathbb{Q}$ recording $x\in \mathbb{R}$
(i.e.~with the property that $\left\vert x-\varphi(n)\right\vert
\leq2^{-n}$), $M$ outputs a rational number $q_{n}$ such that
$\left\vert q_{n}-c\right\vert \leq2^{-n}$.
A $C^{1}$ real function
$f:\mathbb{R\rightarrow R}$ is $C^{1}$-computable if $f$ and its
derivative are both computable. These notions can be generalized to
$\mathbb{R}^{n}$ in a straightforward manner.
\end{remark}

\begin{theorem}
[{\cite[p. 333]{GCB08}}]\label{Th:Simulation}Let $\psi:\mathbb{N}
^{3}\rightarrow\mathbb{N}^{3}$ be the transition function of a
Turing machine $M$ under the encoding described above; let
$0<\varepsilon\leq1/4$; and let $0\leq\delta<2\varepsilon\leq1/2$.
Then there exist

\begin{itemize}
\item $\eta>0$ satisfying $\eta<1/2$, which can be computed from
$\psi,\varepsilon,\delta$, and

\item an analytic closed-form function $g_{M}:\mathbb{R}^{7}\rightarrow
\mathbb{R}^{6}$ which can be written using only the following terms:
variables, polynomial-time computable constants, $+$, $-$, $\times$, $\sin$,
$\cos$, $\arctan$
\end{itemize}

\noindent such that the ODE $z^{\prime}=g_{M}(t,z)$ robustly simulates $M$ in
the following sense: for all $g$ satisfying $\left\Vert g-g_{M}\right\Vert
\leq\delta<1/2$ and for every $x_{0}\in\mathbb{N}^{3}$ that encodes a
configuration according to the encoding described above, if $\bar{x}_{0}
,\bar{y}_{0}\in\mathbb{R}^{3}$ satisfy the conditions $\left\Vert \bar{x}
_{0}-x_{0}\right\Vert \leq\varepsilon$ and $\left\Vert \bar{y}_{0}
-x_{0}\right\Vert \leq\varepsilon$, then the solution $z(t)$ of
\[
z^{\prime}=g(t,z),\qquad z(0)=(\bar{x}_{0},\bar{y}_{0})
\]
satisfies, for all $j\in\mathbb{N}$ and for all $t\in\lbrack j,j+1/2]$,
\begin{equation}
\left\Vert z_{2}(t)-\psi^{\lbrack j]}(x_{0})\right\Vert \leq\eta,
\label{Eq:ODE_bound}
\end{equation}
where $z\equiv(z_{1},z_{2})$, with $z_{1}\in\mathbb{R}^{3}$ and $z_{2}
\in\mathbb{R}^{3}$.
\end{theorem}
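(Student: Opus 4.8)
The plan is to derive the continuous-time statement from the discrete-time one. Theorem~\ref{Th:Main} already supplies an analytic closed-form map $f_M\colon\mathbb{R}^3\to\mathbb{R}^3$ (using only $+,-,\times,\sin,\cos,\arctan$ and polynomial-time computable constants) that robustly simulates $M$; from its explicit construction one may in addition take it to be \emph{error-contracting}, i.e.\ to send the $\varepsilon'$-ball around a configuration into an arbitrarily small ball around the next one, for any chosen $0<\delta'<\varepsilon'<1/2$. The task is then to make the flow of a non-autonomous ODE carry out the iteration $x_0\mapsto\psi(x_0)\mapsto\psi^{[2]}(x_0)\mapsto\cdots$, using the classical ``a flow iterates a map'' device. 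I would keep the state in two blocks $z=(z_1,z_2)$ with $z_1,z_2\in\mathbb{R}^3$ — whence input dimension $7=1+3+3$ (counting $t$) and output dimension $6$ — and alternate, over the two halves of each unit interval, between the phase ``$z_1$ takes the value $f_M(z_2)$ while $z_2$ is held'' and the phase ``$z_2$ takes the value $z_1$ while $z_1$ is held''. With the invariant $z_1(j)\approx z_2(j)\approx\psi^{[j]}(x_0)$, on $[j,j+\tfrac12]$ the held block $z_2$ sits at $\psi^{[j]}(x_0)$, which is exactly what \eqref{Eq:ODE_bound} demands.

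Concretely I would first build two $1$-periodic analytic ``clock'' functions $r_1,r_2\colon\mathbb{R}\to[0,1]$ in the allowed vocabulary, e.g.\ $r_1(t)=\tfrac12+\tfrac1\pi\arctan\!\bigl(\lambda\sin(2\pi t)\bigr)$ and $r_2=1-r_1$, so that $r_1$ is within $O(1/\lambda)$ of $1$ on $[j,j+\tfrac12]$ and of $0$ on $[j+\tfrac12,j+1]$, with only narrow transition windows, and symmetrically for $r_2$. Then I would set, for $i=1,2$, $z_i'=r_i(t)\,T_i(z)+\bigl(1-r_i(t)\bigr)\,E(z_i)$, where $T_1$ drives $z_1$ toward $f_M(z_2)$ and $T_2$ drives $z_2$ toward $z_1$, while $E(z_i)$ is an error-correcting term — built coordinatewise from $\sin(2\pi z_i)$ — pulling a held block onto the integer lattice so it stays at its current configuration. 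For the targeting one wants a function that forces the active block to within a small, \emph{starting-point-independent} residual of its target over a half-interval of forcing (the scalar model $\dot u=-u^3$ does this: $|u|$ falls to $O(R^{-1/2})$ after total forcing $R$, regardless of $|u(0)|$), which matters because consecutive configuration encodings can be far apart. Since $f_M$, $\sin$, $\cos$, $\arctan$ and the arithmetic operations are all admissible, $g_M(t,z)$ is closed-form analytic as required, and because every term is dissipative toward a target the solution stays confined near the relevant configurations on each cycle, so it exists for all $t\ge 0$.

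The heart of the proof is an induction on $j$ maintaining $\|z_1(j)-\psi^{[j]}(x_0)\|,\|z_2(j)-\psi^{[j]}(x_0)\|\le\kappa$ for a small fixed $\kappa$ (base case $z(0)=(\bar x_0,\bar y_0)$, $\kappa=\varepsilon$). On $[j,j+\tfrac12]$ the error-correction keeps the held $z_2$ within $\eta$ of the integer $\psi^{[j]}(x_0)$ — this is \eqref{Eq:ODE_bound} — and, provided this quantity stays below $\varepsilon'$, error-contraction of $f_M$ makes $f_M(z_2(t))$ lie in a tiny ball around $\psi^{[j+1]}(x_0)$ throughout, while the targeting drives $z_1$ to within a controlled residual of this (nearly constant) value; on $[j+\tfrac12,j+1]$ the roles swap and the invariant is restored at $t=j+1$. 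The perturbation hypothesis $\|g-g_M\|\le\delta$ is absorbed with no Gronwall-type estimate: it merely shifts the equilibria of the targeting and of the error-correction by amounts $O(\delta)$, which the contractive/pinning structure then keeps from accumulating over $j$; the resulting $\eta$ is the sum of the error-correction residual, the targeting residual, and the $O(\delta)$ shift, all computable from $\psi,\varepsilon,\delta$, and the room left by $0\le\delta<2\varepsilon\le 1/2$ is what lets these fit under $\eta<1/2$ while still keeping the admissibility bound below $\varepsilon'$.

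The step I expect to be the main obstacle is the quantitative bookkeeping that makes this induction close \emph{uniformly in $j$}: the clock steepness, the targeting strength and the error-correction strength must be chosen, in the right order after fixing $\varepsilon'$ close to $1/2$, so that (held-block drift)~$+$~(targeting residual)~$+$~($O(\delta)$ perturbation)~$+$~(incoming error~$\kappa$) stays simultaneously below $\varepsilon'$ and below $\eta<1/2$, and, crucially, so that a held block does not wander while the other block races across the possibly large gap between two successive configuration encodings. Reconciling ``move the active block far in half a time unit'' with ``keep the held block essentially fixed,'' all inside the closed-form vocabulary $+,-,\times,\sin,\cos,\arctan$, is the delicate point; it is precisely the lattice error-correction term, active exactly when a block's targeting is off, that makes the $j$-uniform control possible.
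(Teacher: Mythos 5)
Your overall architecture is the same as the one used in \cite{GCB08} (and reprised in Section \ref{Sec:ODE-simulation} of this paper for the two-dimensional case): two blocks $z_1,z_2\in\mathbb{R}^3$, cubic targeting with a starting-point-independent residual (Lemma \ref{Lem:Target}), alternation over half-unit intervals driven by periodic analytic clocks, lattice error-correction, an induction on $j$, and absorption of the $\delta$-perturbation through the contractive structure (Lemma \ref{Lem:Target:pertub}) rather than a Gronwall estimate. So the route is the intended one.

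However, there is a genuine gap in the one place you yourself flag as delicate: keeping the held block essentially fixed. Your equation $z_i'=r_i(t)\,T_i(z)+(1-r_i(t))\,E(z_i)$ uses a clock $r_1(t)=\tfrac12+\tfrac1\pi\arctan(\lambda\sin 2\pi t)$ with a \emph{fixed} steepness $\lambda$, so during the off-phase the leakage is of order $|T_i(z)|/\lambda$. But $T_i(z)$ is a cubic in the distance to the target, and that distance is unbounded along a computation: consecutive configuration encodings $\psi^{[j]}(x_0),\psi^{[j+1]}(x_0)$ grow without bound (the encoding multiplies/divides by $k$ as the tape grows), so $|f_M(z_2)-z_1|$ during $z_1$'s held phase is not uniformly bounded in $j$. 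Hence for any fixed $\lambda$ the leakage eventually exceeds any prescribed drift bound, and the lattice-pinning term $E(z_i)$ cannot save you: being built from $\sin(2\pi z_i)$ it is \emph{bounded}, so it cannot cancel an unboundedly large spurious drift, and once the block is pushed more than $1/2$ off its integer the pinning even pulls it to the wrong integer. This is exactly why the construction in \cite{GCB08} (and in (\ref{Eq:Simulation})--(\ref{SistemaCont}) here) makes the damping \emph{state-dependent}: the second argument of $\phi$ contains $\tfrac{c}{\gamma}(z_i-\mathrm{target})^4$, and since $\phi(t,y)\le e^{-y}/8$ on the off-phase while $|x|^3\le x^4+1$, the product (cubic targeting term)$\times$(damping) is $\le\gamma$ uniformly, no matter how far the target is. Without this device (or an equivalent one), your induction does not close uniformly in $j$, so the proposal as written does not yield the theorem; with it, the rest of your bookkeeping (choice of $\gamma$, $\eta$, and the $O(\delta)$ accounting) goes through essentially as you describe.
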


\section{Some useful auxiliary functions\label{Sec:auxiliary}}

In this section we present several functions and results which are
needed in subsequent sections. The function $\Upsilon$ presented in
the next lemma can be seen as a generalization of the
error-correcting function $l_{2}$ from \cite[Lemma 9]{GCB08}. More
specifically, the function $\Psi$ can be viewed as a function that
improves the accuracy of approximations within distance $\leq1/5$ of
an integer (the function $l_{2}$ of \cite[Lemma 9]{GCB08} has a
similar property, but only works for the integers $0$ and 1$)$,
where the correction factor is bounded by $e^{-y}$, and $y>0$ is the
second argument of $\Psi$.

\begin{lemma}
\label{Lemma:Psi}Let $\Psi:\mathbb{R}^{2}\rightarrow\mathbb{R}$ be given by
$\Psi(x,y)=x-\frac{1}{2\pi}\arcsin(\sin(2\pi x)(1-e^{-y-2})).$ Then
$\left\vert \Psi(x,y)-k\right\vert <e^{-y}\left\vert x-k\right\vert $ whenever
$\left\vert x-k\right\vert \leq1/5$ for some $k\in\mathbb{Z}$ and $y\geq0$.
\end{lemma}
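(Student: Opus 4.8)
The plan is to verify the inequality directly from the explicit formula for $\Psi$. Fix $k\in\mathbb{Z}$ and write $x = k + t$ with $|t|\le 1/5$. Since $\sin(2\pi x) = \sin(2\pi(k+t)) = \sin(2\pi t)$, we have
\[
\Psi(x,y) - k = t - \frac{1}{2\pi}\arcsin\bigl(\sin(2\pi t)(1 - e^{-y-2})\bigr).
\]
So the claim reduces to showing $\left|t - \frac{1}{2\pi}\arcsin\bigl(\sin(2\pi t)\,\lambda\bigr)\right| < e^{-y}|t|$ where $\lambda = 1 - e^{-y-2} \in [1-e^{-2}, 1)$. First I would observe that on the range $2\pi|t| \le 2\pi/5 < \pi/2$, the map $t\mapsto \sin(2\pi t)$ is an increasing bijection onto a subinterval of $(-1,1)$, and $\arcsin$ is its inverse on that range; in particular $\frac{1}{2\pi}\arcsin(\sin(2\pi t)) = t$ exactly, which handles the case $\lambda = 1$ and shows the correction term vanishes when $y\to\infty$.

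The heart of the estimate is a one-variable calculus bound. Define $\phi(t) = t - \frac{1}{2\pi}\arcsin(\lambda \sin(2\pi t))$ for $|t|\le 1/5$. Then $\phi$ is odd, $\phi(0)=0$, and
\[
\phi'(t) = 1 - \frac{\lambda\cos(2\pi t)}{\sqrt{1-\lambda^2\sin^2(2\pi t)}}.
\]
I would show $0 \le \phi'(t) \le 1-\lambda = e^{-y-2}$ for all $|t|\le 1/5$: the lower bound because $\lambda\cos\theta \le \sqrt{1-\lambda^2\sin^2\theta}$ is equivalent to $\lambda^2\cos^2\theta + \lambda^2\sin^2\theta \le 1$, i.e.\ $\lambda^2\le 1$; the upper bound because $\frac{\lambda\cos\theta}{\sqrt{1-\lambda^2\sin^2\theta}} \ge \lambda\cos\theta \ge \lambda\cos(2\pi/5) \ge \lambda \ge 1 - e^{-y-2}$ — wait, this needs care since $\cos(2\pi/5)\approx 0.309$, so I would instead argue that $\frac{\lambda\cos\theta}{\sqrt{1-\lambda^2\sin^2\theta}}$, viewed as a function of $\lambda$, is minimized at the endpoint and bound it from below by a quantity of the form $1 - c\,e^{-y-2}$ with $c$ absorbed suitably; more robustly, since $\phi'(0) = 1-\lambda$ and one checks $\phi'$ is monotone in $|t|$ on $[0,1/5]$, the bound $|\phi'(t)|\le$ (its value at the endpoint) suffices, and at the endpoint a direct numerical estimate gives a constant times $e^{-y-2} \le e^{-y}$ once the constant is checked to be $\le e^2$. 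By the mean value theorem, $|\phi(t)| = |\phi(t)-\phi(0)| \le \bigl(\sup_{|s|\le 1/5}|\phi'(s)|\bigr)\,|t| \le e^{-y}|t|$, with strict inequality when $t\neq 0$ because $e^{-y-2} < e^{-y}$ strictly, and trivially when $t=0$.

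The main obstacle I anticipate is making the derivative bound $\sup_{|t|\le 1/5}|\phi'(t)| \le e^{-y}$ clean rather than relying on a fragile numerical coincidence: one wants $\phi'(t)$ controlled by $e^{-y-2}$ times an absolute constant $\le e^2$, uniformly on $|t|\le 1/5$. I would handle this by writing $1 - \frac{\lambda\cos\theta}{\sqrt{1-\lambda^2\sin^2\theta}}$ with $\lambda = 1-\mu$, $\mu = e^{-y-2}\in(0, e^{-2}]$, expanding to first order in $\mu$ and bounding the remainder, so that the supremum is $\le C\mu$ for an explicit $C$; since $\mu \le e^{-2}$ the slack between $e^{-y-2}$ and $e^{-y}$ (a factor $e^2 \approx 7.39$) comfortably absorbs $C$ and the $1/5$ window. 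The oddness of $\phi$ and the identity $\sin(2\pi x)=\sin(2\pi t)$ are the only structural facts needed; everything else is a careful but elementary estimate on an interval of length $2/5$.
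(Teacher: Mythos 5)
Your reduction via periodicity to $\phi(t)=t-\tfrac{1}{2\pi}\arcsin(\lambda\sin 2\pi t)$ with $\lambda=1-\mu$, $\mu=e^{-y-2}$, the formula for $\phi'$, and the lower bound $\phi'\geq 0$ are all fine, but the step your whole argument rests on --- $\sup_{|t|\leq 1/5}\phi'(t)\leq e^{-y}$ --- is false, and your own escape hatch cannot repair it. Indeed $\phi'$ is increasing in $|t|$, so the supremum is $\phi'(1/5)=1-\frac{(1-\mu)\cos(2\pi/5)}{\sqrt{1-(1-\mu)^{2}\sin^{2}(2\pi/5)}}$, and expanding in $\mu$ gives $\phi'(1/5)\sim \sec^{2}(2\pi/5)\,\mu\approx 10.47\,\mu$ as $\mu\to 0$. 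Since $\sec^{2}(2\pi/5)\,e^{-2}\approx 1.42>1$, for all large $y$ one has $\phi'(1/5)>e^{-y}$ (concretely, $y=5$ gives $\phi'(1/5)\approx 0.0094$ versus $e^{-5}\approx 0.0067$). So the constant $C$ you hoped to absorb is exactly $\sec^{2}(2\pi/5)\approx 10.47$, which exceeds the available slack $e^{2}\approx 7.39$: bounding the worst-case slope over the whole window and multiplying by $|t|$ is genuinely too lossy near the endpoint, where $\arcsin$ is steepest.

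The fix is to use an averaged rather than a uniform derivative bound, which is what the paper does: write $\phi(t)=\tfrac{1}{2\pi}\bigl(\arcsin(\sin 2\pi t)-\arcsin(\lambda\sin 2\pi t)\bigr)$ (valid since $2\pi|t|\leq 2\pi/5<\pi/2$) and apply the mean value theorem to $\arcsin$ \emph{in its argument}, between $\lambda\sin(2\pi t)$ and $\sin(2\pi t)$; both points lie in $[-\sin(2\pi/5),\sin(2\pi/5)]$, so the Lipschitz constant is $1/\cos(2\pi/5)\approx 3.24$ (first power of the secant, not its square), and the increment is $\mu|\sin 2\pi t|\leq 2\pi\mu|t|$. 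This yields $|\phi(t)|\leq \frac{1}{\cos(2\pi/5)}\,\mu\,|t|\approx 3.24\,\mu|t|<e^{2}\mu|t|=e^{-y}|t|$, which is precisely the paper's estimate (the paper uses $|\sin 2\pi t|\leq 7|t|$, giving the constant $\approx 3.6$, still below $e^{2}$). Equivalently, you could keep your derivative formula but bound $\frac{1}{t}\int_{0}^{t}\phi'(s)\,ds$ instead of $\sup_{s}\phi'(s)$; as written, however, the proposal does not prove the lemma.
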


\begin{proof}
Let $\overline{\Psi}:\mathbb{R}\rightarrow\mathbb{R}$ be defined by
$\overline{\Psi}(x)=x-\frac{1}{2\pi}\arcsin(\sin(2\pi x))$. We note that,
since $\sin(2\pi x)$ has period 1, $\overline{\Psi}(x)=k$ if $x\in\lbrack
k-1/4,k+1/4]$ for some $k\in\mathbb{Z}$. However, although it is continuous,
the function $\overline{\Psi}$ is not analytic, since it is well-known that if
an analytic function is constant in a non-empty interval, e.g.~$[3/4,5/4]$,
then it should be constant everywhere on the real line $\mathbb{R}$, which is
not the case for $\overline{\Psi}$. The problem is that, although the
composition of analytic functions yields again an analytic function, the
derivative of $\arcsin y$ is not defined when $y=-1$ or $y=1$ and thus
$\overline{\Psi}$ is not analytic when $x=k-1/4$ or $x=k+1/4$ for some
$k\in\mathbb{Z}$. Note, however, that $\arcsin$ is analytic in $(-1,1)$.
Hence, we can multiply $\sin(2\pi x)$ by a value $1-e^{-y}$ (or $1-e^{-y-2}$,
which will be more convenient later on), which is slightly less than 1, to
ensure that the resulting function $\Psi$ is analytic, since in this way we
guarantee that $-1<\sin(2\pi x)(1-e^{-y})<1$ for any $x\in\mathbb{R}$ and
$y\geq0$. Next we notice that, by the mean value theorem
\begin{align*}
\left\vert \arcsin a-\arcsin b\right\vert  &  \leq\left\vert a-b\right\vert
\max_{x\in\lbrack a,b]}\frac{1}{\sqrt{1-x^{2}}}\\
&  =\left\vert a-b\right\vert \max\left(  \frac{1}{\sqrt{1-a^{2}}},\frac
{1}{\sqrt{1-b^{2}}}\right)  .
\end{align*}
Let us now take $g(x)=7x-\sin(2\pi x)$. We note that $g(0)=0$ and that
$g^{\prime}(x)=7-2\pi\cos(2\pi x)>0$. Hence we conclude that $g$ strictly
increases in $[0,1/5]$, which implies $7\left\vert x\right\vert \geq\left\vert
\sin(2\pi x)\right\vert $ when $x\in\lbrack-1/5,1/5]$. This implies that for
$x\in\lbrack k-1/5,k+1/5]$, where $k\in\mathbb{Z}$ is arbitrary, we have
\begin{align*}
\left\vert \Psi(x,y)-k\right\vert  &  =\left\vert x-\frac{1}{2\pi}\arcsin
(\sin(2\pi x)(1-e^{-y-2}))-\left(  x-\frac{1}{2\pi}\arcsin(\sin(2\pi
x))\right)  \right\vert \\
&  =\frac{1}{2\pi}\left\vert \arcsin(\sin(2\pi x))-\arcsin(\sin(2\pi
x)(1-e^{-y-2}))\right\vert \\
&  \leq\frac{1}{2\pi}\frac{1}{\sqrt{1-\sin^{2}(2\pi/5)}}\left\vert \sin\left(
2\pi x\right)  (1-(1-e^{-y-2}))\right\vert \\
&  =\frac{1}{2\pi}\frac{1}{\sqrt{1-\sin^{2}(2\pi/5)}}\left\vert \sin\left(
2\pi(x-k)\right)  \right\vert e^{-y-2}\\
&  <\left\vert \sin\left(  2\pi(x-k)\right)  \right\vert e^{-y}e^{-2}\\
&  \leq7\left\vert x-k\right\vert e^{-y}e^{-2}\\
&  \leq\left\vert x-k\right\vert e^{-y}.
\end{align*}
\end{proof}

We now present another error-correcting function $\sigma:\mathbb{R\rightarrow
R}$ which was first presented in \cite[Proposition 5]{GCB08}. This function is
a uniform contraction around integers. Unlike $\Psi$, one cannot prescribe the
amount of error reduction around each integer with a single application of the
map $\sigma$. On the other hand its use is not restricted to a $1/5$
-neighborhood of integers and can be used on larger neighborhoods. This last
property will be handy later on.

\begin{lemma}
[\cite{GCB08}]\label{Lemma:sigma}Let $\sigma:\mathbb{R\rightarrow R}$ be the
function defined by $\sigma(x)=x-0.2\sin(2\pi x)$. Let $\varepsilon\in
\lbrack0,1/2).$ Then there is some contracting factor $\lambda_{\varepsilon
}\in(0,1)$ such that, $\forall\delta\in\lbrack-\varepsilon,\varepsilon],$
$\forall n\in\mathbb{Z}$, $|\sigma(n+\delta)-n|<\lambda_{\varepsilon}\delta.$
\end{lemma}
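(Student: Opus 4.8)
The plan is to exploit the $1$-periodicity of $\sin(2\pi x)$ to reduce the two-variable claim to a single-variable estimate. Since $\sin(2\pi(n+\delta))=\sin(2\pi\delta)$ for every $n\in\mathbb{Z}$, we get $\sigma(n+\delta)-n=\delta-0.2\sin(2\pi\delta)=:h(\delta)$, independently of $n$. So it suffices to produce $\lambda_\varepsilon\in(0,1)$, depending only on $\varepsilon$, with $|h(\delta)|<\lambda_\varepsilon|\delta|$ for all nonzero $\delta\in[-\varepsilon,\varepsilon]$. Because $h$ is odd, it is enough to treat $\delta\in(0,\varepsilon]$.

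For such $\delta$ I would look at the ratio $r(\delta):=h(\delta)/\delta=1-0.2\,\frac{\sin(2\pi\delta)}{\delta}$ and show $\sup_{\delta\in(0,\varepsilon]}|r(\delta)|<1$ by bounding $r$ from both sides. The upper bound uses the elementary inequality $\sin u<u$ for $u>0$ (with $u=2\pi\delta$): this gives $\frac{\sin(2\pi\delta)}{\delta}<2\pi$, hence $r(\delta)>1-0.4\pi$; note $0.4\pi\approx1.257$, so $\sigma$ is not monotone near the integers, but still $|1-0.4\pi|<1$. For the other side, observe that for $\delta\in(0,1/2)$ one has $2\pi\delta\in(0,\pi)$, so $\sin(2\pi\delta)>0$ and therefore $r(\delta)<1$. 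To make this strict inequality uniform, extend the map $\delta\mapsto\frac{\sin(2\pi\delta)}{\delta}$ continuously to $[0,\varepsilon]$ with value $2\pi$ at $0$; since $\varepsilon<1/2$, it is continuous and strictly positive on the compact set $[0,\varepsilon]$, hence attains a minimum $c_\varepsilon>0$, yielding $r(\delta)\le 1-0.2\,c_\varepsilon<1$ throughout $(0,\varepsilon]$.

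Putting the bounds together, $\Lambda_\varepsilon:=\sup_{\delta\in(0,\varepsilon]}|r(\delta)|\le\max\{0.4\pi-1,\ 1-0.2\,c_\varepsilon\}<1$, and we may take any $\lambda_\varepsilon\in(\Lambda_\varepsilon,1)$. Then for all $n\in\mathbb{Z}$ and all nonzero $\delta\in[-\varepsilon,\varepsilon]$ we have $|\sigma(n+\delta)-n|=|\delta|\,|r(\delta)|\le\Lambda_\varepsilon|\delta|<\lambda_\varepsilon|\delta|$, which is the desired uniform contraction around every integer.

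The one genuine subtlety — and the reason the hypothesis $\varepsilon<1/2$ cannot be dropped — is the uniformity of the contraction factor: as $\delta\to\tfrac12^-$ we have $\sin(2\pi\delta)\to0$, so $r(\delta)\to1$ and no single $\lambda<1$ works on all of $(0,1/2)$. Restricting to $[0,\varepsilon]$ with $\varepsilon<1/2$ and invoking compactness is exactly what converts the pointwise strict inequality into a uniform one. Everything else in the argument is the estimate $\sin u<u$ together with the positivity of $\sin$ on $(0,\pi)$.
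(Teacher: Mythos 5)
Your proof is correct. The paper does not prove this lemma at all (it is imported from the cited reference), so there is nothing to match against; your argument stands as a valid self-contained verification. The reduction via $1$-periodicity to $h(\delta)=\delta-0.2\sin(2\pi\delta)$, the two-sided bound on the ratio $r(\delta)=1-0.2\sin(2\pi\delta)/\delta$ using $\sin u<u$ on one side and positivity of $\sin$ on $(0,\pi)$ plus compactness of $[0,\varepsilon]$ on the other, and the remark that uniformity genuinely needs $\varepsilon<1/2$ are all sound. Your bounds are also consistent with the explicit constant the paper quotes: for $\varepsilon=1/4$ the minimum of $\sin(2\pi\delta)/\delta$ on $(0,1/4]$ is $4$, so $r(\delta)\in(1-0.4\pi,\,0.2]$ and $|r(\delta)|<0.4\pi-1$, recovering $\lambda_{1/4}=0.4\pi-1$ (in fact the strictness of $\sin u<u$ lets you take $\lambda_\varepsilon=\Lambda_\varepsilon$ itself here, rather than something strictly larger). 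One cosmetic point: the inequality in the statement, read literally as $|\sigma(n+\delta)-n|<\lambda_\varepsilon\delta$, cannot hold for $\delta\le 0$; your reading with $|\delta|$ and $\delta\neq 0$ is the intended one.
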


The constants $\lambda_{\varepsilon}$ can usually be explicitly obtained. For
example, as shown in \cite{GCB08}, we can take $\lambda_{1/4}=0.4\pi
-1\approx0.2566371$.

It is well known that there are bijective functions from $\mathbb{N}^{2}$ to
$\mathbb{N}$. An example (see e.g.~\cite[pp. 26--27]{Odi89}) is the
dovetailing pairing map $I:\mathbb{N}^{2}\mathbb{\rightarrow N}$ defined by
the formula
\begin{equation}
I(x,y)=\frac{(x+y)^{2}+3x+y}{2}. \label{Eq:bijection-dim2-dim1}
\end{equation}
Using this map we can obtain a bijective map $I_{k}:\mathbb{N}^{k}
\rightarrow\mathbb{N}$, for $k\geq2$, by defining $I_{k}$ recursively:
$I_{2}(x_{1},x_{2})=I(x_{1},x_{2})$; $I_{k+1}(x_{1},\ldots,x_{k}
,x_{k+1})=I_{2}(I_{k}(x_{1},\ldots,x_{k}),x_{k+1})$. We now show that the maps
$I_{k}$ can be extended to $\mathbb{R}^{k}$ robustly around the integers.
Since each $I_{k}$ is a (multivariate) polynomial, to achieve this objective
we have to analyze how the error is propagated via the application of a
polynomial map. The following lemma is from \cite{BGP12}, and can be viewed as
an extension of a similar result proved in \cite[Lemma 11]{GCB08}\ for the
case of monomials. For multivariate polynomials, the multi-index notation is
used for compactness as follows: a monomial $x_{1}^{\alpha_{1}}\ldots
x_{k}^{\alpha_{k}}$ is represented by $x^{\alpha}$, where $x=(x_{1}
,\ldots,x_{k})$, $\alpha=(\alpha_{1},\ldots,\alpha_{k})$, $\left\vert
\alpha\right\vert =\alpha_{1}+\ldots+\alpha_{k}$ is the degree of the
monomial, and the degree of a (multivariate) polynomial is the maximum degree
of all the monomials which appear in its expression.

\begin{lemma}
[{\cite[Lemma 4]{BGP12}}]\label{Lem:Modulus_continuity}Let $P:\mathbb{R}
^{k}\rightarrow\mathbb{R}$ be a multivariate polynomial of degree $k$ and let
$x,y\in\mathbb{R}^{k}$ be such that $\left\Vert x\right\Vert ,\left\Vert
y\right\Vert \leq M$ for some $M\geq0$. Then
\[
\left\vert P(x)-P(y)\right\vert \leq kM^{k-1}\Sigma P\left\Vert
x-y\right\Vert
\]
where $\Sigma P$ denotes the sum of the absolute values of the coefficients of
$P$.
\end{lemma}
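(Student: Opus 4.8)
The plan is to reduce the multivariate statement to a one-variable estimate by moving from $x$ to $y$ one coordinate at a time, and then to control each one-dimensional increment monomial-by-monomial. First I would write $x=(x_1,\dots,x_k)$ and $y=(y_1,\dots,y_k)$ and introduce the hybrid points $z^{(0)}=x$, $z^{(j)}=(y_1,\dots,y_j,x_{j+1},\dots,x_k)$ for $1\le j\le k$, so that $z^{(k)}=y$. By the triangle inequality, $|P(x)-P(y)|\le\sum_{j=1}^{k}|P(z^{(j-1)})-P(z^{(j)})|$, and consecutive hybrids differ only in the $j$-th coordinate, where the difference of that coordinate is at most $\left\Vert x-y\right\Vert$ in the sup-norm. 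So it suffices to bound $|P(\dots,a,\dots)-P(\dots,b,\dots)|$ when only one slot changes from $a$ to $b$ with $|a|,|b|\le M$ and all other entries bounded by $M$.

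Next I would fix that single variable, say the $j$-th, and regard $P$ as a polynomial $Q(t)=P(\dots,t,\dots)$ in $t$ alone, of degree at most $k$, whose coefficient of $t^m$ is a sum of the original coefficients $c_\alpha$ (with $\alpha_j=m$) each multiplied by a product of the frozen coordinates, hence bounded in absolute value by $M^{|\alpha|-m}\le M^{k-m}$ times $\sum|c_\alpha|$ over the relevant terms. Then I would use the elementary one-variable bound $|t^m-s^m|\le m\,\widetilde M^{m-1}|t-s|$ for $|t|,|s|\le\widetilde M$ (itself the mean value theorem, or a telescoping $t^m-s^m=(t-s)\sum_{i=0}^{m-1}t^i s^{m-1-i}$), applied with $\widetilde M=M$. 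Summing over the monomials of $Q$ and bundling the powers of $M$, each term contributes at most $m\,M^{m-1}\cdot M^{k-m}=m\,M^{k-1}\le k\,M^{k-1}$ times its coefficient weight, so $|Q(a)-Q(b)|\le k M^{k-1}(\Sigma P)_j|a-b|$, where $(\Sigma P)_j$ collects the absolute values of coefficients of monomials actually involving $x_j$.

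Finally I would sum over $j=1,\dots,k$. The naive sum would give an extra factor of $k$, so the slightly delicate bookkeeping step — and the one place a careless argument loses the constant — is to notice that each coefficient $c_\alpha$ is charged in the $j$-th slice only when $\alpha_j\ge 1$, and more importantly that within the $j$-th slice its contribution already carries the full factor $|\alpha|\le k$ only if $\alpha_j$ is large; a sharper accounting replaces $m\le k$ by the actual exponent $\alpha_j$, and summing $\sum_j \alpha_j=|\alpha|\le k$ shows that the total weight attached to each $c_\alpha$ across all $k$ slices is at most $k\,M^{k-1}|c_\alpha|$, not $k^2M^{k-1}|c_\alpha|$. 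Collecting everything yields $|P(x)-P(y)|\le kM^{k-1}\,\Sigma P\,\left\Vert x-y\right\Vert$, which is the claim. I expect this last constant-tracking step to be the only real obstacle; the rest is the standard telescoping-plus-MVT routine.
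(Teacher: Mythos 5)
The paper itself gives no proof of this lemma---it is quoted directly from the cited source [BGP12, Lemma 4]---so there is no internal argument to compare against; judged on its own, your proof is correct and is the standard one: telescope through the hybrid points, reduce to a one-variable increment, use $|t^m-s^m|\le m M^{m-1}|t-s|$, and recover the constant $k$ (rather than $k^2$) by charging each coefficient $c_\alpha$ in the $j$-th slice with its actual exponent $\alpha_j$ and summing $\sum_j \alpha_j=|\alpha|\le k$ over the slices. You correctly identify that last bookkeeping point as the only delicate step and you execute it properly.

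One caveat, which is a defect of the statement rather than of your argument: the inequalities $M^{|\alpha|-m}\le M^{k-m}$ and $\alpha_j M^{|\alpha|-1}\le \alpha_j M^{k-1}$ that you invoke silently require $M\ge 1$ whenever $P$ contains monomials of degree strictly less than $k$. This cannot be repaired within the proof, because the lemma as literally stated fails for small $M$ in that case: for $P(x_1,x_2)=x_1x_2+x_1$ (degree $2$, $\Sigma P=2$), $x=(M,0)$, $y=(0,0)$ give $|P(x)-P(y)|=M$, whereas the claimed bound is $4M^2<M$ once $M<1/4$. So the statement should be read with $M\ge 1$, or with $M^{k-1}$ replaced by $\max\{1,M\}^{k-1}$; note that the paper's subsequent application is unaffected, since it immediately relaxes $\max(\Vert x\Vert,\Vert y\Vert)$ to $2+\Vert x\Vert^{2}+\Vert y\Vert^{2}$, which is at least $1$. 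With that reading, your proof is complete.
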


Now we are ready to state the result that shows the existence of robust
analytic extensions of $I_{k}$ for each $k$.

\begin{proposition}
\label{Prop:many-to-one}For each $k\in\mathbb{N}$, $k\geq2$, there exists an
analytic function $\Upsilon_{k}:\mathbb{R}^{k}\rightarrow\mathbb{R}$ with the
following properties:

\begin{enumerate}
\item If $x\in\mathbb{N}^{k}$, then $\Upsilon_{k}(x)=I_{k}(x)$;

\item For any $x\in\mathbb{R}^{k}$, if there is some $y\in\mathbb{N}^{k}$ such
that $\left\Vert x-y\right\Vert \leq1/5$, then $\left\vert \Upsilon
_{k}(x)-I_{k}(y)\right\vert \leq\left\Vert x-y\right\Vert \leq1/5.$
\end{enumerate}
\end{proposition}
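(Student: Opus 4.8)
The plan is to build $\Upsilon_k$ by induction on $k$, mirroring the recursive definition of $I_k$ but replacing each integer-rounding step with the error-correcting contraction $\Psi$ from Lemma~\ref{Lemma:Psi} so that the accumulated error never grows. For the base case $k=2$, set $\Upsilon_2 = I$, the polynomial in \eqref{Eq:bijection-dim2-dim1}; property~1 is immediate, and for property~2 I would apply Lemma~\ref{Lem:Modulus_continuity} to $I$ (degree $2$) on the ball of radius $M$ around the relevant integer point. The difficulty is that Lemma~\ref{Lem:Modulus_continuity} gives a Lipschitz constant $\,2M^{1}\,\Sigma I$ that is \emph{much larger than $1$}, so a single application of $I$ amplifies a $1/5$-error. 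The fix is to post-compose with $\Psi(\cdot,y)$ for a suitable second argument $y$: since $I(y)\in\mathbb{N}$ and $I$ maps the $1/5$-ball around $y$ into (say) the $c$-ball around $I(y)$ for an explicitly bounded $c$, I first apply $\sigma$ from Lemma~\ref{Lemma:sigma} enough times (a fixed number depending only on $c$, hence on $M$) to pull the error back below $1/5$, and then apply one copy of $\Psi$ with $y$ chosen large enough that $e^{-y}$ beats the residual amplification, yielding a final error $\leq \lVert x-y\rVert$. Actually, a cleaner route: first shrink the input error to something tiny with iterated $\sigma$, then a single $\Psi$ with a large weight gives error $\leq e^{-y}\cdot(\text{tiny}) \leq \lVert x - y \rVert$; I would pick whichever bookkeeping is simplest.

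For the inductive step, suppose $\Upsilon_k$ is constructed with properties~1--2. Define
\[
\Upsilon_{k+1}(x_1,\dots,x_{k+1}) \;=\; \Upsilon_2\bigl(\Phi(\Upsilon_k(x_1,\dots,x_k)),\, x_{k+1}\bigr),
\]
where $\Phi$ is an auxiliary analytic map (a fixed composition of $\sigma$'s and one $\Psi$) that takes a value within $1/5$ of an integer $n$ and returns something within $1/5$ of $n$ while guaranteeing it is in fact within, say, $1/20$ of $n$ so that the subsequent application of $\Upsilon_2$ (whose own internal $\Psi$-correction expects a $1/5$ input error on \emph{each} coordinate) stays in range. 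Property~1 follows since on integer inputs $\Upsilon_k$ returns $I_k(x_1,\dots,x_k)\in\mathbb{N}$, $\Phi$ fixes integers, and $\Upsilon_2=I$ agrees with $I_2$ on $\mathbb{N}^2$, so $\Upsilon_{k+1}$ equals $I_2(I_k(\cdot),x_{k+1})=I_{k+1}(\cdot)$. For property~2, if $\lVert(x_1,\dots,x_{k+1})-(y_1,\dots,y_{k+1})\rVert\le 1/5$ then by induction $\lvert\Upsilon_k(x_1,\dots,x_k)-I_k(y_1,\dots,y_k)\rvert\le 1/5$; the tightening map $\Phi$ brings this below the threshold the outer step needs without increasing it past $1/5$; and the base-case analysis of $\Upsilon_2$ then gives $\lvert\Upsilon_{k+1}(x)-I_{k+1}(y)\rvert\le\max(\text{that tightened error},\,\lvert x_{k+1}-y_{k+1}\rvert)\le 1/5$, with the max collapsing to $\le\lVert x-y\rVert$ after the final $\Psi$-contraction.

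Analyticity is inherited throughout: $I$ is polynomial, $\sigma$ and $\Psi$ are analytic on all of $\mathbb{R}^2$ (the latter precisely because the factor $1-e^{-y-2}$ keeps the argument of $\arcsin$ strictly inside $(-1,1)$, as established in Lemma~\ref{Lemma:Psi}), and compositions of analytic functions are analytic; moreover every ingredient is a closed-form expression in the permitted vocabulary ($+,-,\times,\sin,\cos,\arcsin$, constants), so $\Upsilon_k$ is as well. The one place requiring genuine care — the main obstacle — is quantitative: Lemma~\ref{Lem:Modulus_continuity} forces us to know an a priori bound $M$ on the magnitude of the integer point $y$ (equivalently on $I_{k}(y)$), and the number of $\sigma$-iterations and the size of the $\Psi$-weight must be chosen \emph{depending on that $M$}. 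Since the statement of Proposition~\ref{Prop:many-to-one} is about a fixed analytic function $\Upsilon_k$ valid for \emph{all} $y\in\mathbb{N}^k$, I cannot let the construction depend on $y$. The resolution is to route the error-correction through $\Psi(\cdot,y)$ where the second argument $y$ is itself taken to be (a smooth function of) the first argument — e.g.\ use $\Psi(x, \phi(x))$ with $\phi$ growing like $\log$ of the trivial bound on $\lvert x\rvert$ — so that larger intermediate values automatically receive stronger contraction, exactly as the lemma's $e^{-y}$ factor is designed to permit. Verifying that this self-referential choice of weight still keeps everything analytic and still yields the clean bound $\lVert x-y\rVert$ uniformly in $y$ is the crux, and is where I would spend the bulk of the argument.
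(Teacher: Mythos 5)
There is a genuine gap, and it sits exactly where you put the weight of your construction: you correct the error \emph{after} applying $I_2$, but no post-composition can work. If $\Upsilon_2=h\circ I_2$ for any function $h$, property~2 is already unsatisfiable: the Lipschitz constant of $I_2$ near $y\in\mathbb{N}^2$ grows like $\left\Vert y\right\Vert$, so for large $y$ the image under $I_2$ of the $1/5$-ball around $y$ is an interval of length much greater than $1$, and these intervals overlap for distinct integer points. Concretely, taking $y=(n,0)$ and $y'=(n-1,1)$, one has $I_2(y)-I_2(y')=1$, while for $n$ large there exist $x$ within $1/5$ of $y$ and $x'$ within $1/5$ of $y'$ with $I_2(x)=I_2(x')$; the single value $h(I_2(x))$ would then have to lie within $1/5$ of two integers at distance $1$. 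The information is lost the moment the amplified error crosses $1/2$, and neither $\sigma$ nor $\Psi$ can recover it: Lemma~\ref{Lemma:sigma} and Lemma~\ref{Lemma:Psi} only contract toward the \emph{nearest} integer, which by then may be the wrong one. Your intermediate plan of ``a fixed number of $\sigma$-iterations depending on $c$, hence on $M$'' also cannot be salvaged into a single analytic function: a fixed number of iterations yields a fixed contraction factor, while the amplification of $I_2$ over all of $\mathbb{N}^2$ is unbounded.

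The self-referential weight you propose in your last paragraph, $\Psi(x,\phi(x))$ with $\phi$ growing with the size of $x$, is indeed the right device --- but it must be applied to the \emph{input coordinates before} $I_2$, which is what the paper does: it sets $\Upsilon_2(x_1,x_2)=I_2\bigl(\Psi(x_1,32(1+\left\Vert x\right\Vert_2^2)),\Psi(x_2,32(1+\left\Vert x\right\Vert_2^2))\bigr)$, so that the pre-contraction $e^{-32(1+\left\Vert x\right\Vert_2^2)}\leq 1/(32(1+\left\Vert x\right\Vert_2^2))$ beats the Lipschitz bound $8(2+\left\Vert x\right\Vert^2+\left\Vert y\right\Vert^2)$ coming from Lemma~\ref{Lem:Modulus_continuity}, yielding $\left\vert\Upsilon_2(x)-I_2(y)\right\vert\leq\left\Vert x-y\right\Vert$. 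Once $\Upsilon_2$ is non-expanding on errors in this sense, your inductive step simplifies: $\Upsilon_{k+1}(x_1,\ldots,x_{k+1})=\Upsilon_2(\Upsilon_k(x_1,\ldots,x_k),x_{k+1})$ already propagates the bound $\leq\left\Vert x-y\right\Vert$, so the auxiliary tightening map $\Phi$ you insert between stages is unnecessary (though harmless). As written, however, your base case is the part that fails, and the ``crux'' you defer is precisely the content of the paper's proof.
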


\begin{proof}
We start with the case $k=2$. Since
\[
I_{2}(x_{1},x_{2})=\frac{(x_{1}+x_{2})^{2}+3x_{1}+x_{2}}{2}=\frac{x_{1}
^{2}+2x_{1}x_{2}+x_{2}^{2}+3x_{1}+x_{2}}{2},
\]
it is clear that the function $I_{2}$ is well-defined for all $x_{1},x_{2}
\in\mathbb{R}$. In the remaining of this proof, we assume that $I_{2}$ is
defined over $\mathbb{R}^{2}$. Since $I_{2}$ is a polynomial of degree 2, by
Lemma \ref{Lem:Modulus_continuity} we conclude that
\[
\left\vert I_{2}(x)-I_{2}(y)\right\vert \leq8\max(\left\Vert x\right\Vert
,\left\Vert y\right\Vert )\left\Vert x-y\right\Vert .
\]
Since $\left\Vert x\right\Vert \leq1+\left\Vert x\right\Vert ^{2}$ for all
$x\in\mathbb{R}^{2}$, it follows that
\begin{equation}
\left\vert I_{2}(x)-I_{2}(y)\right\vert \leq8(2+\left\Vert x\right\Vert
^{2}+\left\Vert y\right\Vert ^{2})\left\Vert x-y\right\Vert . \label{Eq:aux2}
\end{equation}
Set
\[
\Upsilon_{2}(x_{1},x_{2})=I_{2}(\Psi(x_{1},32(1+\left\Vert x\right\Vert
_{2}^{2})),\Psi(x_{2},32(1+\left\Vert x\right\Vert _{2}^{2}))),
\]
where $\left\Vert x\right\Vert _{2}^{2}=x_{1}^{2}+x_{2}^{2}$. As a composition
of analytic functions, $\Upsilon_{2}:\mathbb{R}^{2}\rightarrow\mathbb{R}$ is
clearly analytic. If $x_{1},x_{2}\in\mathbb{N}$, it is trivial to verify that
$\Upsilon_{2}(x_{1},x_{2})=I_{2}(x_{1},x_{2})$, which implies property 1. For
property 2, let us assume that $\left\Vert x-y\right\Vert \leq\varepsilon
\leq1/5$ for some $y\in\mathbb{N}^{k}$. Using Lemma \ref{Lemma:Psi} and the
inequality $e^{-l}<(1/l)$ for all $l\geq1$, we obtain the following estimate,
where $\varepsilon=\left\Vert x-y\right\Vert \leq1/5$:
\begin{align}
&  \left\Vert (\Psi(x_{1},32(1+\left\Vert x\right\Vert _{2}^{2})),\Psi
(x_{2},32(1+\left\Vert x\right\Vert _{2}^{2})))-(y_{1},y_{2})\right\Vert
\nonumber\\
&  \leq\varepsilon e^{-32(1+\left\Vert x\right\Vert _{2}^{2})}\nonumber\\
&  \leq\frac{\varepsilon}{32(1+\left\Vert x\right\Vert _{2}^{2})}.
\label{Eq:bound-I-k}
\end{align}
Recall that, on $\mathbb{R}^{2}$, $\Vert\cdot\Vert_{2}$ denotes the Euclidean
norm while $\Vert\cdot\Vert$ denotes the maximum norm. Since $\left\Vert
x-y\right\Vert \leq\frac{1}{5}$ and $\left\Vert y\right\Vert _{2}\leq\sqrt
{2}\left\Vert y\right\Vert $, it follows that $\left\Vert x-y\right\Vert
_{2}\leq\sqrt{2}\left\Vert x-y\right\Vert \leq1/2$, which further implies that
\begin{align*}
\left\Vert y\right\Vert _{2}^{2}  &  =\left\Vert y-x+x\right\Vert _{2}^{2}\\
&  \leq\left(  \left\Vert x-y\right\Vert _{2}+\left\Vert x\right\Vert
_{2}\right)  ^{2}\\
&  \leq\left(  \frac{1}{2}+\left\Vert x\right\Vert _{2}\right)  ^{2}\\
&  =\frac{1}{4}+\left\Vert x\right\Vert _{2}+\left\Vert x\right\Vert _{2}
^{2}\\
&  \leq\frac{1}{4}+1+\left\Vert x\right\Vert _{2}^{2}+\left\Vert x\right\Vert
_{2}^{2}<2+2\left\Vert x\right\Vert _{2}^{2}\text{.}
\end{align*}
Then it follows from this inequality, (\ref{Eq:aux2}), and (\ref{Eq:bound-I-k}) that
\begin{align*}
&  \left\vert \Upsilon_{2}(x)-I_{2}(y)\right\vert \\
&  =\left\vert \Upsilon_{2}(x)-\Upsilon_{2}(y)\right\vert \\
&  \leq8\max(\left\Vert x\right\Vert ,\left\Vert y\right\Vert )\left\Vert
(\Psi(x_{1},32(1+\left\Vert x\right\Vert _{2}^{2})),\Psi(x_{2},32(1+\left\Vert
x\right\Vert _{2}^{2})))-(y_{1},y_{2})\right\Vert \\
&  \leq8(2+\left\Vert x\right\Vert ^{2}+\left\Vert y\right\Vert ^{2}
)\frac{\varepsilon}{32(1+\left\Vert x\right\Vert _{2}^{2})}\\
&  \leq(2+\left\Vert x\right\Vert _{2}^{2}+\left\Vert y\right\Vert _{2}
^{2})\frac{\varepsilon}{(2+2\left\Vert x\right\Vert _{2}^{2}+(2+2\left\Vert
x\right\Vert _{2}^{2}))}\qquad(\left\Vert \cdot\right\Vert \leq\left\Vert
\cdot\right\Vert _{2})\\
&  \leq(2+\left\Vert x\right\Vert _{2}^{2}+\left\Vert y\right\Vert _{2}
^{2})\frac{\varepsilon}{(2+\left\Vert x\right\Vert _{2}^{2}+\left\Vert
y\right\Vert _{2}^{2})}=\varepsilon
\end{align*}
which proves property 2 for $k=2$.

For the case where $k>2$, the result is obtained inductively by setting
\[
\Upsilon_{k+1}(x_{1},\ldots,x_{k},x_{k+1})=\Upsilon_{2}(\Upsilon_{k}
(x_{1},\ldots,x_{k}),x_{k+1}).
\]
Property 1 is immediate; property 2 follows from the estimate below:
\begin{align*}
&  \left\Vert \Upsilon_{k+1}(x_{1},\ldots,x_{k},x_{k+1})-I_{k+1}(y_{1}
,\ldots,y_{k},y_{k+1})\right\Vert \\
&  =\left\Vert \Upsilon_{k+1}(x_{1},\ldots,x_{k},x_{k+1})-\Upsilon_{k+1}
(y_{1},\ldots,y_{k},y_{k+1})\right\Vert \\
&  =\left\Vert \Upsilon_{2}(\Upsilon_{k}(x_{1},\ldots,x_{k}),x_{k+1}
)-\Upsilon_{2}(\Upsilon_{k}(y_{1},\ldots,y_{k}),y_{k+1})\right\Vert \\
&  \leq\left\Vert (\Upsilon_{k}(x_{1},\ldots,x_{k}),x_{k+1})-(\Upsilon
_{k}(y_{1},\ldots,y_{k}),y_{k+1})\right\Vert \\
&  \leq\max\left(  \left\Vert \Upsilon_{k}(x_{1},\ldots,x_{k})-\Upsilon
_{k}(y_{1},\ldots,y_{k})\right\Vert ,\left\Vert x_{k+1}-y_{k+1}\right\Vert
\right) \\
&  \leq\max\left(  \left\Vert x_{1}-y_{1}\right\Vert ,\left\Vert x_{2}
-y_{2}\right\Vert ,\ldots,\left\Vert x_{k+1}-y_{k+1}\right\Vert \right) \\
&  \leq\left\Vert x-y\right\Vert .
\end{align*}
\end{proof}

As shown above, $I_{k}:\mathbb{N}^{k}\rightarrow\mathbb{N}$ provides a
bijection between $\mathbb{N}^{k}$ and $\mathbb{N}$ that can be robustly
extended to an analytic function $\Upsilon_{k}:\mathbb{R}^{k}\rightarrow
\mathbb{R}$. We now show that the inverse function of $I_{k}$ can also be
robustly extended to an analytic function from $\mathbb{R}$ to $\mathbb{R}
^{k}$. We write $I_{k}^{-1}(z)=(J_{k,1}(z),\ldots,J_{k,k}(z))$, where
$J_{k,1},\ldots,J_{k,k}:\mathbb{N\rightarrow N}$. The following result shows
that $I_{k}^{-1}:\mathbb{N\rightarrow N}^{k}$ can be robustly extended to an
analytic function from $\mathbb{R}$ to $\mathbb{R}^{k}$.

\begin{proposition}
\label{Prop:one-to-many}For each $k\in\mathbb{N}$, $k\geq2$, and for each
$1\leq i\leq k$, there exists an analytic function $\Omega_{k,i}
:\mathbb{R}\rightarrow\mathbb{R}$ with the following property: for any
$x\in\mathbb{R}$, if there is some $n\in\mathbb{N}$ such that $\left\vert
x-n\right\vert \leq1/5$, then $\left\vert \Omega_{k,i}(x)-J_{k,i}
(n)\right\vert \leq1/5.$
\end{proposition}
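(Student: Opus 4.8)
The plan is to mimic the structure of the proof of Proposition \ref{Prop:many-to-one}, but going in the opposite direction: instead of contracting coordinates and feeding them into a polynomial, I would first reconstruct the integer $n$ (error-correction), then apply a closed-form analytic extension of each projection $J_{k,i}$. The first observation is that $I_{k}^{-1}$ is built recursively from $I_{2}^{-1}$, so it suffices to handle $k=2$ and then compose: writing $I_{k+1}^{-1}$ in terms of $I_{k}^{-1}$ and $I_{2}^{-1}$ will let the induction go through, exactly as in the construction of $I_{k}$ and $\Upsilon_k$. So the core task is to build $\Omega_{2,1},\Omega_{2,2}:\mathbb{R}\to\mathbb{R}$ with the stated $1/5$-robustness.

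For $k=2$, recall from \eqref{Eq:bijection-dim2-dim1} that $I(x,y)=\frac{(x+y)^{2}+3x+y}{2}$. Setting $s=x+y$, one has $2z = s^2+2s+x-y = s(s+2)+x-y$; the standard way to invert this is to recover the "diagonal index" $s$ as $s = \lfloor (\sqrt{8z+1}-1)/2\rfloor$, and then $x = z - \frac{s(s+3)}{2}$ and $y = s - x$. The key point is that $J_{2,1}$ and $J_{2,2}$ are each given by a closed-form expression involving $\sqrt{8z+1}$ and a \emph{floor} operation, followed by arithmetic. The plan is therefore: (i) replace $z$ by an accurate integer approximation using an error-correcting contraction; since $z$ is only within $1/5$ of an integer but $z$ may be large, I would use $\Psi(z, c(1+z^2))$ from Lemma \ref{Lemma:Psi} for a suitable constant $c$, so that the output is within, say, $e^{-c(1+z^2)}/5$ of the true integer $n$; (ii) apply $t\mapsto (\sqrt{8t+1}-1)/2$, which is analytic for $t$ near a nonnegative integer and whose derivative is bounded on the relevant range — here I must track how the factor $1/\sqrt{8t+1}\sim 1/\sqrt{z}$ shrinks the error, which actually helps; (iii) apply an analytic replacement of the floor function valid in a neighborhood of the integers — for instance $\lfloor\cdot\rfloor$ can be written as $r - \overline{\Psi}(r)$-type corrections, but since $s=\lfloor\cdot\rfloor$ need not be close to an integer before correction, I would instead use the fact that $(\sqrt{8n+1}-1)/2$ for $n=I(a,b)$ actually lies within a controlled distance of the integer $a+b$ (this is a purely arithmetic fact about the pairing function), so that a single further application of $\Psi$ (or $\sigma$ from Lemma \ref{Lemma:sigma}, whichever neighborhood is needed) cleans it up; (iv) compute $x = z - s(s+3)/2$ and $y=s-x$ via $\Upsilon$-type polynomial-composition bookkeeping, using Proposition \ref{Prop:many-to-one} / Lemma \ref{Lem:Modulus_continuity} to bound error propagation through these low-degree polynomials, choosing the constant $c$ in step (i) large enough that the final error is $\leq 1/5$.

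The main obstacle I anticipate is controlling the floor/diagonal-index step: $\lfloor\cdot\rfloor$ is not analytic and not even close to the identity, so the argument must exploit a quantitative gap — namely that for $n=I(a,b)$ with $a,b\in\mathbb{N}$, the real number $(\sqrt{8n+1}-1)/2$ is bounded away from half-integers by an amount that does not decay (or decays slowly enough) as $n\to\infty$, so that an error-correcting function with a $1/5$- or $1/4$-neighborhood of validity suffices. Verifying this gap estimate, and checking that the accumulated error from the square-root step plus the two polynomial evaluations stays below $1/5$ after choosing $c$ appropriately, is the delicate part; everything else is a routine chain of Lemma \ref{Lemma:Psi}, Lemma \ref{Lemma:sigma}, and Lemma \ref{Lem:Modulus_continuity} estimates, together with the inductive composition $\Omega_{k+1,i}$ built from $\Omega_{k,\cdot}$ and $\Omega_{2,\cdot}$ in the same way $\Upsilon_{k+1}$ was built from $\Upsilon_k$ and $\Upsilon_2$.
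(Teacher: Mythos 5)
Your plan hits a genuine obstruction at exactly the step you flag as delicate, and the gap estimate you hope for is false. With the paper's pairing $I(a,b)=\frac{(a+b)^2+3a+b}{2}=\frac{s(s+1)}{2}+a$ where $s=a+b$ (so $J_{2,1}(z)=z-\frac{s(s+1)}{2}$, not $z-\frac{s(s+3)}{2}$ — a minor slip), the diagonal index is $s=\lfloor t\rfloor$ with $t=(\sqrt{8z+1}-1)/2$. For the last entry of each diagonal, $z=I(s,0)=\frac{s(s+1)}{2}+s$, one computes $8z+1=(2s+3)^2-8$, hence $t=s+1-\Theta(1/s)$. So the points of the image come arbitrarily close to the integers \emph{from below} (at rate $\Theta(1/\sqrt{z})$), and — worse — their \emph{nearest} integer is $s+1$ while the floor you need is $s$ (already for $s=1$: $t\approx1.56$). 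Consequently a "single further application of $\Psi$ or $\sigma$" cannot clean this up: $\Psi$ only acts within a $1/5$-neighborhood of integers, and $\sigma$ (or any fixed number of its iterates) contracts toward the nearest integer, which here is the wrong one. Any fix along your lines needs an analytic floor-surrogate whose transition sharpness grows with $z$ (a $1/s$-wide transition window below each integer), which is a substantive new construction with its own error analysis; none of Lemma \ref{Lemma:Psi}, Lemma \ref{Lemma:sigma}, or Lemma \ref{Lem:Modulus_continuity} supplies it. The inductive reduction to $k=2$ and the error-propagation bookkeeping in your steps (i), (ii), (iv) are fine; the proposal stands or falls on step (iii), and as written it falls.

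For contrast, the paper avoids explicit inversion of $I$ altogether: it observes that $J_{2,1},J_{2,2}:\mathbb{N}\to\mathbb{N}$ are computable (by brute-force search over $I_2(i,j)=x$) by a Turing machine $M$ running in explicit polynomial time $P$, and then defines $\Omega_{2,i}(x)=\varphi_2\circ\pi_4^6\circ\sigma^{[l]}\circ\Phi(P(x+1),\varphi_1(x))$, where $\Phi$ is the flow of the robust analytic ODE simulating $M$ from Theorem \ref{Th:Simulation}, and $\varphi_1(x)=2^{\Psi(x,x+2)}-1$, $\varphi_2(x)=\log_2(\Psi(x,2)+1)$ convert between a number and the encoding of its unary representation. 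There the only error-correction needed is within small neighborhoods of integers, where $\Psi$ and $\sigma$ do apply. If you want to keep a "closed-form inversion" route, you would have to either build the variable-sharpness floor approximation mentioned above and verify it on points $O(1/\sqrt{z})$ below integers, or switch to the paper's simulation-based construction.
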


\begin{proof}
First we prove the result when $k=2$. Let us assume that $i=1$ (the case where
$i=2$ is similar). Then, by definition, $J_{2,1}(I_{2}(x_{1},x_{2}))=x_{1}$.
Since $I_{2}(x_{1},x_{2})\geq x_{i}$ for $i=1,2$ or, more generally,
$I_{k}(x_{1},\ldots,x_{k})\geq x_{i}$ for all $i=1,\ldots,k$ (see
e.g.~\cite[p.~27]{Odi89}), we have the following algorithm to compute
$J_{2,1}$, given some input $x\in\mathbb{N}$:

\begin{enumerate}
\item For all $i=1,\ldots,x$

\item For all $j=1,\ldots,x$

\item If $I_{2}(i,j)=x$, then output $i$

\item Next $j$

\item Next $i$
\end{enumerate}

This algorithm always stops with the correct result. Hence $J_{2,1}$ can be
computed by a one tape Turing machine $M$. Furthermore, using well-known
techniques, we can assume that $M$ has the following properties: (i) the tape
alphabet of the Turing machine is $\{B,1\}$ where $B$ denotes the blank
symbol; (ii) the input alphabet is $\{1\}$; (iii) each input $z\in\mathbb{N}$
and the respective output of the computation is represented in unary, i.e.~by
a sequence of $z$ 1's; and (iv) $J_{2,1}$ is computed by $M$ in time
$P(n)=P(x)$, where $P$ is a polynomial which can be explicitly obtained and
which is assumed to be an increasing function. Regarding condition (i), we
notice that there are universal Turing machines which only use the alphabet
$\{B,1\}$ and hence we do not lose computational computational power with
respect to Turing machines using more symbols. For example, if we have a
Turing machine $M_{1}$ which tape alphabet has $k>3$ symbols (including the
blank symbol $B$), then we can create a Turing machine $M_{2}$ with tape
alphabet $\{B,0,1\}$ which simulates $M_{1}$ by taking some fixed
$l\in\mathbb{N}$ satisfying $l\geq\log_{2}(k)$ such that each symbol of
$M_{1}$ is represented by distinct strings (blocks) of $\{0,1\}^{\ast}$ of
length $l$, with the exception of the blank symbol of $M_{1}$ which is
represented by a block of $l$ blank symbols in $M_{2}$. By its turn, $M_{2}$
can be simulated by a Turing machine $M_{3}$ with tape alphabet $\{B,1\}$ by
coding each symbol of $M_{2}$ as a string of length 2 in $M_{3}$, e.g.~by
coding $0,1$, and $B$ as $1B,11,$ and $BB$, respectively. We note that
regarding condition (iv), the expression of $P$ depends on the exact
implementation details of $M$, but we prefer to omit the exact description of
$M$ and of $P$ for brevity (these can be obtained as usual, although the
procedure is a bit tedious and hence not of much interest for this proof).

Let $g_{M}:\mathbb{R}^{7}\rightarrow\mathbb{R}^{6}$ be the function given by
Theorem \ref{Th:Simulation} such that $y^{\prime}=g_{M}(y)$ simulates $M$
(taking $\varepsilon=1/5$ in that theorem), and let $\eta<1/2$ be the
associated constant such that (\ref{Eq:ODE_bound}) holds. (The value of
$\delta$ in Theorem \ref{Th:Simulation} is not really relevant; one may simply
take $\delta=1/5)$.) Let $l\in\mathbb{N}$ be chosen such that $\sigma^{\lbrack
l]}(\eta)\leq1/5$ (see Lemma \ref{Lemma:sigma}). Given an input $x\in
\mathbb{N}$ for the Turing machine $M$, let us assume that this input is
encoded in unary (i.e.~$x$ is represented by a sequence of $x$ 1's) when
processed by $M$. We can then transform this unary coding of $x$ into another
integer value $\varphi_{1}(x)$ via the coding (\ref{Eq:ConfigEncoding}), where
$\gamma(B)=0$ and $\gamma(1)=1$, which can then be used to create an initial
condition for $y^{\prime}=g_{M}(y)$ such that this IVP simulates $M$ with
input $x$. Note that although $x\in\mathbb{N}$ and $\varphi_{1}(x)\in
\mathbb{N}$, we do not necessarily have $x=\varphi_{1}(x)$. Since initially
the tape will be empty, with the exception of the input, and $M$ will be on
its initial state, which we assume to be the state 1 (we can assume, without
loss of generality, that the states of $M$ correspond to the elements of
$\{1,2,\ldots,m\}$), then the initial configuration of $M$ will be coded as
$(\varphi_{1}(x),0,1)$. Let $\Phi(t,\varphi_{1}(x))$ denote the solution of
$y^{\prime}=g_{M}(y)$ with initial condition associated to the configuration
$(\varphi_{1}(x),0,1)$ and let $\pi_{i}^{k}:\mathbb{R}^{k}\rightarrow
\mathbb{R}$ be the projection $\pi_{i}^{k}(x_{1},\ldots,x_{k})=x_{i}$ for
$1\leq i\leq k$. Note that $\Phi$ is analytic and that $M$ computes $J_{2,1}$.
We will use these facts to create the function $\Omega_{2,1}$ to be defined as
$\Omega_{2,1}(x)=\varphi_{2}\circ\pi_{4}^{6}\circ\sigma^{\lbrack l]}\circ
\Phi(P(x+1),\varphi_{1}(x))$ for some analytic functions $\varphi_{1}
,\varphi_{2}$ yet to be defined, which essentially translate the value of
$x\in\mathbb{N}$ into the coding (\ref{Eq:ConfigEncoding}) of its unary
representation (case of $\varphi_{1}$) and, reciprocally, converts the coding
of the unary representation back to the number encoded by this representation
(note again that $x\in\mathbb{N}$ may not be equal to the number $\bar{x}
\in\mathbb{N}$ encoding the symbolic representation -- unary, binary, etc. --
of $x$ given by (\ref{Eq:ConfigEncoding})). Since $\left\vert x-n\right\vert
\leq1/5$ and $P$ is assumed to be increasing, it follows that $x+1>n\geq0$ and
$P(x+1)\geq P(n)$. Hence, if $\left\vert x-n\right\vert \leq1/5$ implies that
$\left\vert \varphi_{1}(x)-\varphi_{1}(n)\right\vert \leq1/5$, we get that
$\Phi(P(x+1),\varphi_{1}(x))$ will return the coding of the output of $M$ with
the input encoding the number $x\in\mathbb{N}$ (note that although the
relation (\ref{Eq:ODE_bound}) is in general valid only in intervals of the
format $[j,j+1/2]$ with $j\in\mathbb{N}$, but since we have assumed that the
image of an halting configuration is itself, it follows from the results of
\cite{GCB08} that (\ref{Eq:ODE_bound}) is valid for all times $[j+1/2,j+1]$
after the Turing machine has halted. See also Remark \ref{Remark:halting}). We
now only have to define the functions $\varphi_{1}$ and $\varphi_{2}$. Let us
now first turn our attention to $\varphi_{1}$. Note that given some
$n\in\mathbb{N}$, the number $2^{n}-1$ will represent $n$ in unary when using
the coding (\ref{Eq:ConfigEncoding}) (taking $k=2$, since $\gamma(B)=0$ by
definition, and by taking $\gamma(1)=1$). Hence it makes sense to take
$\varphi_{1}(n)=2^{n}-1$. However, we cannot take $\varphi_{1}(x)$ to be
$2^{x}-1$, because in that case we cannot ensure that $\left\vert
x-n\right\vert \leq1/5$ implies $\left\vert \varphi_{1}(x)-\varphi
_{1}(n)\right\vert \leq1/5$. To avoid this problem, we improve the accuracy of
$x$ using the function $\Psi$ from Lemma \ref{Lemma:Psi}, obtaining an
improved estimate $\bar{x}$ satisfying $\left\vert 2^{\bar{x}}-2^{n}
\right\vert \leq1/5$. We now determine the accuracy improvement needed to
achieve this objective. Note that the exponential function $2^{x}$ is strictly
increasing and thus, by the mean value theorem, we have
\[
\left\vert 2^{\bar{x}}-2^{n}\right\vert \leq2^{\max(\bar{x},n)}\ln2\left\vert
x-n\right\vert <2^{x+1}\left\vert \bar{x}-n\right\vert .
\]
Hence, if we have $\left\vert \bar{x}-n\right\vert \leq2^{-(x+4)}$, we get
$\left\vert 2^{\bar{x}}-2^{n}\right\vert \leq1/5$. This is achieved if
$\bar{x}=\Psi(x,x+2)$, due to Lemma \ref{Lemma:Psi} and from the property that
$\left\vert x-n\right\vert \leq1/5<2^{-2}$. Hence we can take $\varphi
_{1}(x)=2^{\Psi(x,x+2)}-1$.

We now proceed with a similar reasoning for $\varphi_{2}$. We first note that
if $n\in\mathbb{N}$ codes the exact output of $M$ according to
(\ref{Eq:ConfigEncoding}), and thus represents in unary some number
$i\in\mathbb{N}$, we will have $n=2^{i}-1$ as we have already seen. This
implies that $i=\log_{2}(n+1)$. Now we have to analyze again the effect of
replacing $n$ by some real value $x$ satisfying $\left\vert x-n\right\vert
\leq1/5$. By the mean value theorem, we have (note also that $n\geq0$ since
$n\in\mathbb{N}$)
\begin{align*}
\left\vert \log_{2}(x+1)-\log_{2}(n+1)\right\vert  &  \leq\frac{1}{\ln
2(\min(x,n)+1)}\left\vert (x+1)-(n+1)\right\vert \\
&  \leq\frac{1}{\ln2(n+4/5)}\left\vert x-n\right\vert \\
&  \leq\frac{5}{4\ln2}\left\vert x-n\right\vert \\
&  <2\left\vert x-n\right\vert .
\end{align*}
Therefore, to ensure that $\left\vert x-n\right\vert \leq1/5$ implies that
$\left\vert \log_{2}(x+1)-\log_{2}(n+1)\right\vert \leq1/5$, it is enough to
take $\varphi_{2}(x)=\log_{2}(\Psi(x,2)+1)$ (using Lemma \ref{Lemma:Psi})\ or
$\varphi_{2}(x)=\log_{2}(\sigma(x)+1)$ (using Lemma \ref{Lemma:sigma} and
noting, as mentioned in \cite[Remark 6]{GCB08}, that we can take
$\lambda_{1/4}=0.4\pi-1\approx0.2566371$).

Proceeding similarly for the case of $J_{2,2}$, we conclude that $\Omega
_{2,2}(x)=\varphi_{2}\circ\pi_{4}^{6}\circ\sigma^{\lbrack l]}\circ
\Phi_{M^{\prime}}(P_{M^{\prime}}(x+1),\varphi_{1}(x))$, where $M^{\prime}$ is
a TM machine computing $J_{2,2}$ which is similar to $M$, with the difference
that in Step 3 of the pseudo-algorithm above we take \textquotedblleft If
$I_{2}(i,j)=x$, then output $j$\textquotedblright. The results for $k>2$
follow inductively.
\end{proof}

\section{Analytic one-dimensional maps robustly simulate Turing
machines\label{Sec:Turing-map}}

We now present one of the main results of this paper. Let $M$ be a one-tape
Turing machine and let $(y_{1},y_{2},q)\in\mathbb{N}^{3}$ be the encoding of a
configuration as given in Section \ref{Sec:Dimension3} and
(\ref{Eq:ConfigEncoding}). In what follows each configuration $(y_{1}
,y_{2},q)$ is encoded in the single value
\[
c=C(y_{1},y_{2},q)=I_{3}(y_{1},y_{2},q)\in\mathbb{N}\text{.}
\]
Thus we can consider that, under this new encoding, the transition function of
a Turing machine is a map $\psi:\mathbb{N}\rightarrow\mathbb{N}$.

\begin{theorem}
\label{Th:one-dim-Turing-map}Let $\psi:\mathbb{N}\rightarrow\mathbb{N}$ be the
transition function of a Turing machine $M$, under the encoding described
above, and let $0\leq\delta<1/5$. Then there is an analytic function
$g_{M}:\mathbb{R}\rightarrow\mathbb{R}$ that robustly simulates $M$ in the
following sense: for all $g$ such that $\left\Vert g-g_{M}\right\Vert
\leq\delta$, and for all $\bar{x}_{0}\in\mathbb{R}$ satisfying $\left\vert
\bar{x}_{0}-x_{0}\right\vert \leq1/5,$ where $x_{0}\in\mathbb{N}$ represents
some configuration, one has for all $j\in\mathbb{N}$
\begin{equation}
\left\vert g^{[j]}(\bar{x}_{0})-\psi^{\lbrack j]}(x_{0})\right\vert \leq1/5.
\label{Eq:map-iteration}
\end{equation}

\end{theorem}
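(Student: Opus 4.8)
**The plan is to build $g_M$ by "unfolding" the three-dimensional robust map $f_M$ from Theorem~\ref{Th:Main} through the encoding $C = I_3$.** The idea is that one step of $\psi:\mathbb{N}\to\mathbb{N}$ should be realized as: decode the single integer $c$ into its three components $(y_1,y_2,q)$, apply $f_M$ componentwise, then re-encode via $I_3$. Concretely, I would set
\[
g_M(x) = \Upsilon_3\bigl(f_M(\Omega_{3,1}(x),\Omega_{3,2}(x),\Omega_{3,3}(x))\bigr),
\]
where $\Upsilon_3$ is the robust analytic re-encoding from Proposition~\ref{Prop:many-to-one}, and the $\Omega_{3,i}$ are the robust analytic decodings from Proposition~\ref{Prop:one-to-many}. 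All three ingredients are analytic and closed-form (in the sense of the theorem's term list, modulo the $\log$ appearing in $\Omega_{k,i}$ — see the obstacle below), so $g_M$ is analytic; and on exact integer configurations $x_0 = C(y_1,y_2,q)$ it reproduces $\psi$ exactly, giving the base case $j=0$ of \eqref{Eq:map-iteration}.

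\textbf{The core of the argument is an induction on $j$ tracking the $1/5$-error bound through one application of $g$ (not $g_M$).} Assume $|g^{[j]}(\bar x_0) - \psi^{[j]}(x_0)| \le 1/5$; write $x = g^{[j]}(\bar x_0)$ and let $n = \psi^{[j]}(x_0) = C(y_1,y_2,q) \in \mathbb{N}$, so $|x-n|\le 1/5$. By Proposition~\ref{Prop:one-to-many}, $|\Omega_{3,i}(x) - J_{3,i}(n)| \le 1/5$ for each $i$, i.e. the decoded point $(\Omega_{3,1}(x),\Omega_{3,2}(x),\Omega_{3,3}(x))$ is within $1/5$ (in max-norm) of the genuine configuration $(y_1,y_2,q)\in\mathbb{N}^3$. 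Now the key point is that $\Upsilon_3 \circ f_M$ is itself a robust extension of $I_3\circ\psi_3$ (where $\psi_3$ is the three-dimensional transition function), because Theorem~\ref{Th:Main} gives $1/5$-robustness of $f_M$ with $\varepsilon$ there taken appropriately (one needs $\varepsilon = 1/5 < 1/2$, which is allowed, and then $\|f_M(\text{anything within }1/5) - \psi_3(\cdot)\| \le 1/5$), after which Proposition~\ref{Prop:many-to-one} contracts the resulting $1/5$-ball around $\psi_3(y_1,y_2,q)\in\mathbb{N}^3$ back to within $1/5$ of $I_3(\psi_3(y_1,y_2,q)) = \psi(n)$. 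Finally I must absorb the perturbation: we iterate $g$, not $g_M$, with $\|g-g_M\|\le\delta<1/5$. So I want the composite $g_M$ to have a little room to spare — either I arrange the constants in the $\Omega$'s and $\Upsilon_3$ and in the choice of $\varepsilon$ in Theorem~\ref{Th:Main} so that $|g_M(x) - \psi(n)| \le 1/5 - \delta$ whenever $|x-n|\le 1/5$, or (cleaner) I note that $\Upsilon_3$ actually contracts strictly (property~2 gives $\le \|x-y\|$, but the $\Psi$-based construction in its proof gives a genuine $e^{-y}$ factor), so there is slack to absorb any $\delta<1/5$. Then $|g(x)-\psi(n)| \le |g_M(x)-\psi(n)| + \delta \le 1/5$, closing the induction.

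\textbf{The main obstacle is making the error bookkeeping genuinely tight rather than merely "$\le 1/5$ composed with $\le 1/5$ gives $\le 1/5$".} Each of the three stages (decode, apply $f_M$, re-encode) is only guaranteed to \emph{not expand} the $1/5$-ball, not to strictly contract it, so naively chaining them leaves zero margin for the perturbation $\delta$. I expect to resolve this by exploiting the strict contraction actually available: Theorem~\ref{Th:Main} can be invoked with its internal $\delta$-parameter so that $f_M$ maps a $1/5$-ball around a configuration into an arbitrarily small ball around the next configuration, and then $\Upsilon_3$ preserves smallness; composing with the $\Omega_{3,i}$'s (whose error, traced through their proofs, can likewise be driven below any prescribed threshold by increasing the accuracy arguments of the internal $\Psi$'s) I can guarantee $|g_M(x)-\psi(n)|$ is, say, $\le 1/10$, leaving room for $\delta<1/5$... — wait, $1/10 + 1/5 > 1/5$, so more care is needed: I should instead target $|g_M(x)-\psi(n)|\le 1/5-\delta$ directly, which is possible since the contraction factors are tunable and $\delta<1/5$ is fixed in advance. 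A secondary, more cosmetic issue is that $\Omega_{k,i}$ as constructed uses $\log_2$, which is not on the advertised term list $\{+,-,\times,\sin,\cos,\arctan,\text{constants}\}$; since the theorem statement here only asks for an analytic $g_M$ (not a closed-form one in that restricted sense) this is harmless, but I would remark on it, or alternatively replace $\log_2(\cdot+1)$ by a closed-form analytic surrogate that agrees on the relevant integers and is robust there.
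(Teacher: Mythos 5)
Your construction of the unperturbed simulator is exactly the paper's: the paper first sets $\bar g_M(x)=\Upsilon_3\circ f_M(\Omega_{3,1}(x),\Omega_{3,2}(x),\Omega_{3,3}(x))$ and, as you do, obtains the weaker property that iterating $\bar g_M$ itself keeps the error $\le 1/5$. The genuine gap is in how you absorb the map perturbation $\delta$. You correctly identify that you need $|g_M(x)-\psi(n)|\le 1/5-\delta$, but both mechanisms you offer for this are unsound as stated. First, $\Upsilon_3$ is not a strict contraction in any usable sense: property 2 of Proposition \ref{Prop:many-to-one} only guarantees non-expansion, and in its proof the $e^{-y}$ gain coming from $\Psi$ is precisely spent compensating the polynomial amplification of $I_3$ (the chain of estimates ends in ``$\le\varepsilon$'', with no uniform factor $<1$); moreover, even a fixed uniform contraction factor would not suffice, because $1/5-\delta$ can be arbitrarily small as $\delta$ approaches $1/5$, so the slack must be tunable as a function of $\delta$. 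Second, the internal $\delta$-parameter of Theorem \ref{Th:Main} governs perturbations of the map, not the size of the output ball: with $\varepsilon=1/5$ the conclusion is only that the image of a $1/5$-ball lands in a $1/5$-ball; shrinking $\varepsilon$ does shrink the output ball but simultaneously shrinks the admissible input ball, so it cannot be fed the merely $1/5$-accurate outputs of the $\Omega_{3,i}$ (and Proposition \ref{Prop:one-to-many}, as stated, only delivers accuracy $1/5$ --- improving it means reopening its proof, not citing it).

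The paper closes exactly this gap with a one-line device you did not invoke, although it is supplied for this purpose: post-compose with the uniform integer-contraction $\sigma$ of Lemma \ref{Lemma:sigma}. Choosing $l$ with $\lambda_{1/4}^{l}/5<1/5-\delta$ and setting $g_M=\sigma^{[l]}\circ\bar g_M$ gives $|g_M(\bar x_0)-\psi(x_0)|\le 1/5-\delta$ whenever $|\bar x_0-x_0|\le 1/5$, whence $|g(\bar x_0)-\psi(x_0)|\le\delta+(1/5-\delta)=1/5$, and your induction on $j$ then closes as you describe. Your alternative of retuning the internal accuracies of the $\Omega_{3,i}$ and the $\varepsilon$ of Theorem \ref{Th:Main} could probably be made to work, but it amounts to redoing those proofs, whereas the $\sigma^{[l]}$ trick uses the cited results as black boxes. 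Your side remark on $\log_2$ is harmless: the theorem only requires $g_M$ to be analytic, and $\log$ is in any case admitted in the paper's notion of closed form.
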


\begin{proof}
Most of the work to prove this theorem was already done in Section
\ref{Sec:auxiliary}. Let us first define a function $\bar{g}_{M}
:\mathbb{R}\rightarrow\mathbb{R}$ that robustly simulates $M$ in a weaker
sense that just the input can be perturbed and not $\bar{g}_{M}$ itself. More
specifically, let us define a function $\bar{g}_{M}:\mathbb{R}\rightarrow
\mathbb{R}$ with the following property:\ for all $\bar{x}_{0}\in\mathbb{R}$
satisfying $\left\vert \bar{x}_{0}-x_{0}\right\vert \leq1/5,$ where $x_{0}
\in\mathbb{N}$ represents some configuration, one has for all $j\in
\mathbb{N}$
\begin{equation}
\left\vert \bar{g}_{M}^{[j]}(\bar{x}_{0})-\psi^{\lbrack j]}(x_{0})\right\vert
\leq1/5. \label{Eq:aux3}
\end{equation}
To achieve this purpose, let $f_{M}$ be the corresponding 3-dimensional map
simulating $M$ obtained via Theorem \ref{Th:Main} with $\varepsilon=1/5$. Then
we take
\[
\bar{g}_{M}(x)=\Upsilon_{3}\circ f_{M}(\Omega_{3,1}(x),\Omega_{3,2}
(x),\Omega_{3,3}(x)).
\]
It then follows from Theorem \ref{Th:Main}, Proposition \ref{Prop:many-to-one}
, and Proposition \ref{Prop:one-to-many} that property (\ref{Eq:aux3}) is
satisfied. We now only have to take care of the perturbations to $\bar{g}_{M}
$.
Let $\sigma$ be the function defined in Lemma \ref{Lemma:sigma}. Let
$j\in\mathbb{N}$ be some integer such that $0<\lambda_{1/4}^{j}/5<1/5-\delta$
and take
\[
g_{M}(x)=\sigma^{\lbrack j]}\circ\bar{g}_{M}(x).
\]
Then, by property (\ref{Eq:aux3}) and Lemma \ref{Lemma:sigma}, if $\bar{x}
_{0}\in\mathbb{R}$ satisfies $\left\vert \bar{x}_{0}-x_{0}\right\vert
\leq1/5,$ where $x_{0}\in\mathbb{N}$ represents some configuration, one has
\[
\left\vert g_{M}(\bar{x}_{0})-\psi(x_{0})\right\vert \leq1/5-\delta.
\]
If $\left\Vert g-g_{M}\right\Vert \leq\delta$, then we conclude that
\begin{align*}
\left\vert g(\bar{x}_{0})-\psi(x_{0})\right\vert  &  \leq\left\vert g(\bar
{x}_{0})-g_{M}(\bar{x}_{0})\right\vert +\left\vert g_{M}(\bar{x}_{0}
)-\psi(x_{0})\right\vert \\
&  \leq\delta+(1/5-\delta)\\
&  \leq1/5.
\end{align*}
By using this last inequality and by iterating $g$ and $\psi$, we conclude
that the property (\ref{Eq:map-iteration}) holds.
\end{proof}

\begin{remark}
In the statement of Theorem \ref{Th:one-dim-Turing-map}, we could have picked
some fixed $\varepsilon>0$ satisfying $\delta<\varepsilon\leq1/5$ and, instead
of assuming that $\left\vert \bar{x}_{0}-x_{0}\right\vert \leq1/5$, we could
have assumed that $\left\vert \bar{x}_{0}-x_{0}\right\vert \leq\varepsilon$
and required that $\left\vert g^{[j]}(\bar{x}_{0})-\psi^{\lbrack j]}
(x_{0})\right\vert \leq\varepsilon$ for condition (\ref{Eq:map-iteration}). To
see this it would be enough to compose $g_{M}$ with $\sigma^{\lbrack l]}$,
where $\sigma$ is given by Lemma \ref{Lemma:sigma} and $l\in\mathbb{N}$ is
such that $\lambda_{1/4}^{l}/4\leq\varepsilon-\delta$, with $\lambda
_{1/4}=0.4\pi-1\approx0.2566371$.
\end{remark}

\section{Analytic two-dimensional ODEs can robustly simulate Turing
machines\label{Sec:ODE-simulation}}

In this section we construct an analytic two-dimensional ODE that robustly
simulates Turing machines in the sense of Theorem \ref{Th:Simulation}. To
prove this result, we simulate the iteration of the one-dimensional analytic
function provided by Theorem \ref{Th:one-dim-Turing-map} using a
two-dimensional analytic ODE. Then it follows from Theorem
\ref{Th:one-dim-Turing-map} that this ODE will simulate a TM. The approach is
similar to that used in \cite{GCB08}. More precisely, the following theorem is
proved in this section.

\begin{theorem}
\label{Th:two-dim-ODE-Turing}Let $\psi:\mathbb{N}\rightarrow\mathbb{N}$ be the
transition function of a Turing machine $M$, under the encoding described in
Section \ref{Sec:Turing-map}, and let $0\leq\delta<2/5$. Then there exist:

\begin{itemize}
\item $\eta>0$ satisfying $\eta<2/5<1/2$, which can be computed from $\delta$; and

\item an analytic function $g_{M}:\mathbb{R}^{3}\rightarrow\mathbb{R}^{2}$
\end{itemize}

\noindent such that the ODE $z^{\prime}=g_{M}(t,z)$ robustly simulates $M$ in
the following sense: for all $g$ satisfying $\left\Vert g-g_{M}\right\Vert
\leq\delta<2/5$ and for all $x_{0}\in\mathbb{N}$ which encodes a configuration
according to the encoding described above, if $\bar{x}_{0},\bar{y}_{0}
\in\mathbb{R}$ satisfy the conditions $\left\Vert \bar{x}_{0}-x_{0}\right\Vert
\leq1/5$ and $\left\Vert \bar{y}_{0}-x_{0}\right\Vert \leq1/5$, then the
solution $z(t)$ of
\[
z^{\prime}=g(t,z),\qquad z(0)=(\bar{x}_{0},\bar{y}_{0})
\]
satisfies, for all $j\in\mathbb{N}_{0}$ and for all $t\in\lbrack j,j+1/2]$,
\[
\left\Vert z_{2}(t)-\psi^{\lbrack j]}(x_{0})\right\Vert \leq\eta,
\]
where $z(t)\equiv(z_{1}(t),z_{2}(t))\in\mathbb{R}^{2}$.
\end{theorem}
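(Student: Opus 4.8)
The plan is to encode the iteration of the one-dimensional analytic map $g_M$ from Theorem~\ref{Th:one-dim-Turing-map} into the flow of a two-dimensional analytic ODE, following the classical ``target-chasing'' technique used in \cite{GCB08} (and ultimately going back to Branicky/Campagnolo). We keep two coordinates $z=(z_1,z_2)$: $z_2$ holds the ``current'' configuration value and $z_1$ is a scratch variable that, over each half-unit time interval, is driven close to $g_M(z_2)$; then over the next half-unit interval the roles are swapped so that $z_2$ is driven close to the (new) value held in $z_1$. Over one full unit of time the pair therefore advances from (approximately) $\psi^{[j]}(x_0)$ to (approximately) $\psi^{[j+1]}(x_0)$, which is exactly the bound claimed in the statement.

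Concretely, I would first fix a target-chasing building block: a scalar ODE of the form $w' = c\,\phi(t)\,(\theta(t)-w)^{3}$ (or the smooth-switching variants used in \cite{GCB08}), where $\phi$ is a nonnegative analytic ``clock'' that is large on the active half-interval and small elsewhere (built from $\sin$ and the auxiliary contracting machinery of Section~\ref{Sec:auxiliary}), and $\theta(t)$ is the analytic target. Using a cube (odd power $\geq 3$) rather than a linear term is the standard trick that makes the approach rate fast enough that, starting within $1/5$ of the target, after half a unit of time the error is driven below any prescribed threshold while the analyticity of the right-hand side is preserved. I would run two such blocks in the two coordinates with clocks $90^\circ$ out of phase: on $t\in[j,j+1/2]$ the target for $z_1$ is $g_M(\mathrm{round}\text{-ish}(z_2))$ — more precisely $g_M$ post-composed with enough iterates of $\sigma$ from Lemma~\ref{Lemma:sigma} to reabsorb accumulated error — while $z_2$ is (nearly) held fixed; on $t\in[j+1/2,j+1]$ the clocks swap and $z_2$ chases $z_1$. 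Since $g_M$ is analytic and $\sigma,\Psi$ are analytic, and products/compositions of analytic functions are analytic, the resulting $g_M:\mathbb{R}^3\to\mathbb{R}^2$ is analytic and expressible in the required closed form.

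The error bookkeeping is the routine-but-careful part: one shows by induction on $j$ that at each integer (and, with a slightly weaker constant, on each $[j,j+1/2]$) the active coordinate is within some fixed $\eta<2/5$ of $\psi^{[j]}(x_0)$. The two error sources are (a) the passive coordinate drifting slightly during the half-interval when its clock is merely ``small'' rather than zero, and (b) the active coordinate not reaching its target exactly in finite time. Source (a) is controlled by choosing the contraction/clock constants so the drift is $o(\eta)$; source (b) is controlled by the cubic approach rate plus a bounded number of applications of $\sigma$ (Lemma~\ref{Lemma:sigma}) baked into the target, exactly as $\sigma^{[l]}$ is used in Proposition~\ref{Prop:one-to-many}. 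For robustness to perturbations $\|g-g_M\|\le\delta$: since $g_M$ already contracts errors strictly inside $1/5$ (because $g_M=\sigma^{[j]}\circ\bar g_M$ in Theorem~\ref{Th:one-dim-Turing-map}), and the target-chasing dynamics are themselves contracting toward the target, a $\delta$-perturbation of the vector field only shifts the reached value by $O(\delta)$ per step; choosing $\eta$ so that $\eta+O(\delta)<2/5$ and verifying this is stable under iteration closes the argument. The continuous-time version of Theorem~\ref{Th:Main}/Theorem~\ref{Th:one-dim-Turing-map} from \cite{GCB08} provides the template for all the constant-chasing.

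The main obstacle I anticipate is getting the \emph{dimension down to two} while keeping everything \emph{analytic and robust} simultaneously: the naive Branicky construction wants an extra coordinate for the clock phase, and the naive way to get a self-contained analytic clock (a function that is genuinely $0$ on half the interval) fails because a nonzero analytic function cannot vanish on an interval — this is the very same obstruction flagged for $\overline\Psi$ in the proof of Lemma~\ref{Lemma:Psi}. The resolution is to use a clock that is merely \emph{exponentially small} (not zero) on the inactive half-interval, built from $\sin$ composed with the kind of $\arcsin$-based smoothing in $\Psi$, and then to show the residual drift it induces in the passive coordinate is dominated by the contraction margin $2/5-\eta$. Threading that quantitative estimate through the induction, with the perturbation $\delta$ also present, is where the real care is needed; once the clock-smallness bound and the cubic-approach bound are pinned down, the rest is the same as the three-dimensional map case combined with the one-dimensional reduction already established.
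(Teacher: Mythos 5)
Your overall architecture is the same as the paper's: iterate the one-dimensional map of Theorem \ref{Th:one-dim-Turing-map} with two coordinates chasing each other's targets via cubic targeting equations (Lemmas \ref{Lem:Target} and \ref{Lem:Target:pertub}) on alternating half-unit intervals, replace the exact-zero clock by an analytic nearly-zero one, and interpose $\sigma^{[l]}$ (Lemma \ref{Lemma:sigma}) to reabsorb accumulated error before feeding values back into $g_M$. However, there is one genuine gap: your clock is a function of $t$ alone (your building block is ``$w^{\prime}=c\,\phi(t)(\theta(t)-w)^{3}$'' with $\phi$ exponentially small on the inactive half-interval, and your stated resolution of the analyticity obstruction is again a clock that is merely exponentially small in $t$). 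This does not control the passive coordinate's drift, because the mismatch entering the cube is unbounded along the simulation: on $[j,j+1/2]$ the passive variable sits near $\psi^{[j]}(x_{0})$ while the active one moves toward $\psi^{[j+1]}(x_{0})$, so the cubed difference can be of order $\left\vert \psi^{[j+1]}(x_{0})-\psi^{[j]}(x_{0})\right\vert ^{3}$, which grows without bound as the encoded configurations grow. A fixed exponentially small factor times an unbounded cube gives no uniform bound, so ``choosing the contraction/clock constants so the drift is $o(\eta)$'' cannot be done once and for all; the induction over $j$ breaks at this point.

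The missing idea, which is the crux of the paper's construction (\ref{Eq:Simulation})--(\ref{SistemaCont}), is to make the clock \emph{state-dependent}: the second argument of $\Psi$ in $\phi_{1},\phi_{2}$ is taken to be $\tfrac{c_{i}}{\gamma}(\text{mismatch})^{4}+\tfrac{c_{i}}{\gamma}+10$, so that on the inactive half-interval $\phi_{i}$ is at most $\min\bigl(\gamma\bigl(c_{i}\left\vert \text{mismatch}\right\vert ^{3}\bigr)^{-1},1/10\bigr)$ (using $\left\vert x\right\vert ^{3}\leq x^{4}+1$), whence $\left\vert z_{i}^{\prime}\right\vert \leq\gamma+\delta$ pointwise and the passive drift over a half-interval is at most $(\gamma+\delta)/2$ no matter how large the target jump is. With this modification your error bookkeeping closes exactly as in the paper: the active coordinate lands within $2\gamma+\delta/2\leq1/5$ of its target by Lemma \ref{Lem:Target:pertub}, the passive coordinate stays within $\eta=(\gamma+\delta)/2+1/5<2/5$, and $\sigma^{[l]}$ brings it back below $\gamma<1/5$ before $g_M$ is applied, so the argument iterates. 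Without the state-dependent damping the drift estimate, and hence the whole induction, fails.
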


The remaining of this section is devoted to the proof of Theorem
\ref{Th:two-dim-ODE-Turing}. We first present the main ideas in \cite{GCB08}
for simulating the iteration of a map defined \emph{over the integers}, which
admits a robust analytic real extension, using an analytic ODE. We begin with
a construction that uses an analytic ODE to approximate a value $b\in
\mathbb{R}$ in a finite (given) amount of time with some (given) accuracy.
This construction will be needed to derive the ODE simulating the iteration of
the map. Consider the following basic ODE
\begin{equation}
y^{\prime}=c(b-y)^{3}\phi(t), \label{Eq:ODE_target}
\end{equation}
which was already studied in \cite{Bra05}, \cite{CMC00}, \cite[Section
7]{GCB08}. The ODE can be easily solved by separating variables, which gives
rise to the following result.

\begin{lemma}
[\cite{GCB08}]\label{Lem:Target}Consider a point $b\in\mathbb{R}$ (the
\emph{target}), some $\gamma>0$ (the \emph{targeting error}), time instants
$t_{0}$ (\emph{departure time}) and $t_{1}$ (\emph{arrival time}), with
$t_{1}>t_{0}$, and a function $\phi:\mathbb{R\rightarrow R}$ with the property
that $\phi(t)\geq0$ for all $t\geq t_{0}$ and $\int_{t_{0}}^{t_{1}}
\phi(t)dt>0$. Then the IVP defined by (\ref{Eq:ODE_target}) (the
\emph{targeting equation}) with the initial condition $y(t_{0})=y_{0}$ and
\begin{equation}
c\geq\frac{1}{2\gamma^{2}\int_{t_{0}}^{t_{1}}\phi(t)dt}
\label{Eq:Target_Def_c}
\end{equation}
has the property that $\left\vert y(t)-b\right\vert <\gamma$ for $t\geq t_{1}
$, independently of the initial condition $y_{0}\in\mathbb{R}$.
\end{lemma}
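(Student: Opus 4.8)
The plan is to analyze the ODE \eqref{Eq:ODE_target} directly by separation of variables, since for fixed $b$ the equation $y' = c(b-y)^3\phi(t)$ is separable. Writing $u = y - b$, we get $u' = -cu^3\phi(t)$, so $u^{-3}\,du = -c\phi(t)\,dt$, and integrating from $t_0$ to $t$ yields
\[
-\tfrac12 u(t)^{-2} + \tfrac12 u(t_0)^{-2} = -c\int_{t_0}^{t}\phi(s)\,ds,
\]
hence $u(t)^{-2} = u(t_0)^{-2} + 2c\int_{t_0}^t\phi(s)\,ds$. This is valid as long as $u(t_0)\neq 0$; the case $y_0 = b$ is trivial since then $y\equiv b$. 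Solving for $|u(t)|$ gives
\[
|y(t)-b| = \frac{1}{\sqrt{\,u(t_0)^{-2} + 2c\int_{t_0}^t \phi(s)\,ds\,}} \le \frac{1}{\sqrt{2c\int_{t_0}^t \phi(s)\,ds}},
\]
where I dropped the nonnegative term $u(t_0)^{-2}$ and used $\phi\ge 0$ on $[t_0,\infty)$.

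Next I would observe that for $t\ge t_1$ the monotonicity $\int_{t_0}^t\phi(s)\,ds \ge \int_{t_0}^{t_1}\phi(s)\,ds$ holds (again because $\phi\ge0$), so
\[
|y(t)-b| \le \frac{1}{\sqrt{2c\int_{t_0}^{t_1}\phi(s)\,ds}}.
\]
Then plugging in the hypothesis $c \ge \big(2\gamma^2\int_{t_0}^{t_1}\phi(t)\,dt\big)^{-1}$ from \eqref{Eq:Target_Def_c} gives $2c\int_{t_0}^{t_1}\phi(s)\,ds \ge 1/\gamma^2$, whence $|y(t)-b| \le \gamma$ for all $t\ge t_1$. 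To obtain the strict inequality $<\gamma$ claimed in the statement, I would note that either $u(t_0)^{-2}>0$ contributes a strictly positive extra term, or $\int_{t_0}^{t_1}\phi > 0$ strictly (which is assumed), making the bound strict; a small amount of care distinguishes $\le$ from $<$ but it is routine.

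The only genuine subtlety — and the step I would flag as the main obstacle, though it is a mild one — is justifying that the solution $y(t)$ exists for all $t\ge t_0$ and does not blow up or cross $b$ in finite time. Since $u' = -cu^3\phi(t)$ with $\phi\ge0$, the quantity $|u|$ is nonincreasing on $[t_0,\infty)$, so the solution stays bounded (by $|u(t_0)|$) and hence is globally defined forward in time; moreover $u(t)$ never changes sign, since $u\equiv 0$ is the only solution through a zero and solutions are unique (the right-hand side is locally Lipschitz in $y$). This guarantees the separation-of-variables computation is valid on all of $[t_0,\infty)$ and that $\sqrt{u(t_0)^{-2}+\cdots}$ is well-defined. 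Everything else is direct substitution.
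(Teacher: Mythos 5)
Your proof is correct and follows essentially the same route the paper indicates: the lemma is quoted from \cite{GCB08} with the remark that it follows by separating variables in (\ref{Eq:ODE_target}), which is exactly your computation, including the standard uniqueness argument showing $y-b$ never vanishes so the division by $(y-b)^3$ is legitimate. Nothing further is needed.
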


However, since we wish the ODE simulating Turing machines to be robust to
perturbations, we have to analyze a perturbed version of (\ref{Eq:ODE_target}).

\begin{lemma}
[\cite{GCB08}]\label{Lem:Target:pertub} Consider a point $b\in\mathbb{R}$ (the
\emph{target}), some $\gamma>0$ (the \emph{targeting error}), time instants
$t_{0}$ (\emph{departure time}) and $t_{1}$ (\emph{arrival time}), with
$t_{1}>t_{0}$, and a function $\phi:\mathbb{R\rightarrow R}$ with the property
that $\phi(t)\geq0$ for all $t\geq t_{0}$ and $\int_{t_{0}}^{t_{1}}
\phi(t)dt>0$. Let $\rho,\delta\geq0$ and let $\bar{b},E:\mathbb{R\rightarrow
R}$ be functions with the property that $\left\vert \overline{b}
(t)-b\right\vert \leq\rho$ and $\left\vert E(t)\right\vert \leq\delta$ for all
$t\geq t_{0}$. Then the IVP defined by
\begin{equation}
z^{\prime}=c(\overline{b}(t)-z)^{3}\phi(t)+E(t), \label{Eq:Perturbed_target}
\end{equation}
with the initial condition $z(t_{0})=\bar{z}_{0}$, where $c$ satisfies
(\ref{Eq:Target_Def_c}), has the property that $\left\vert z(t_{1}
)-b\right\vert <\rho+\gamma+\delta(t_{1}-t_{0})$, independently of the initial
condition $\bar{z}_{0}\in\mathbb{R}$.
\end{lemma}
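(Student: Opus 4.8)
The plan is to compare the solution $z$ of the perturbed equation~(\ref{Eq:Perturbed_target}) with the solution $y$ of the unperturbed targeting equation~(\ref{Eq:ODE_target}) (with target $b$, same $c$, same $\phi$, and with a judiciously chosen initial condition), and to control the gap between them using a Gr\"onwall-type estimate together with the sign structure of the cubic. First I would set $w(t)=z(t)-y(t)$ and observe that $y$ already satisfies $|y(t_{1})-b|<\gamma$ by Lemma~\ref{Lem:Target}, regardless of $y(t_{0})$; so it suffices to bound $|w(t_{1})|$ by $\rho+\delta(t_{1}-t_{0})$. I would choose $y(t_{0})=\bar z_{0}$ so that $w(t_{0})=0$, which kills the contribution of the (irrelevant) initial data.

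The heart of the matter is the differential inequality for $w$. Writing
\[
w' = z'-y' = c\phi(t)\bigl[(\bar b(t)-z)^{3}-(b-y)^{3}\bigr] + E(t),
\]
I would split $(\bar b(t)-z)^{3}-(b-y)^{3}$ as
$\bigl[(\bar b(t)-z)^{3}-(b-z)^{3}\bigr] + \bigl[(b-z)^{3}-(b-y)^{3}\bigr]$.
The first bracket is bounded in absolute value by a constant times $\rho$ (using $|\bar b(t)-b|\le\rho$ and the factorization $a^{3}-c^{3}=(a-c)(a^{2}+ac+c^{2})$), but this is where a subtlety appears: that bound involves $z$, which is a priori unbounded. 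The key observation that saves the argument is the \emph{monotonicity/sign} of the cubic term: the map $u\mapsto -(b-u)^{3}$ is nondecreasing, so the second bracket, once multiplied by $c\phi(t)\ge0$ and paired against $w=z-y$, is $\le 0$ in the sense that $c\phi(t)\bigl[(b-z)^{3}-(b-y)^{3}\bigr]\cdot\mathrm{sgn}(w)\le 0$. Consequently, estimating $\tfrac{d}{dt}|w|\le \bigl(\text{sign}(w)\bigr)w'$, the troublesome dissipative term only helps, and we are left with
\[
\frac{d}{dt}|w(t)| \le c\phi(t)\bigl|(\bar b(t)-z)^{3}-(b-z)^{3}\bigr| + |E(t)|.
\]
At this point one must still handle the first term; I would absorb it into the targeting mechanism itself by the standard trick used in~\cite{GCB08}: replace $b$ in the comparison solution by $\bar b$ pointwise is not quite legitimate since $\bar b$ varies, so instead one notes that $|(\bar b(t)-z)^{3}-(b-z)^{3}|$ contributes, after integrating against $c\phi$, an amount that the choice of $c$ in~(\ref{Eq:Target_Def_c}) converts into an extra $\gamma$-sized term, while the dissipation drives the rest to within $\gamma$; the cleanest route, and the one I expect the authors take, is to prove directly that $|z(t_{1})-\bar b(t_{1})|<\gamma$ by applying Lemma~\ref{Lem:Target}'s separation-of-variables computation to the frozen-target equation and treating $E$ and the variation of $\bar b$ as the only genuine perturbations, picking up $+\delta(t_{1}-t_{0})$ from $\int_{t_0}^{t_1}|E|$ and nothing from the $\bar b$-variation because $|\bar b - b|\le\rho$ enters only through the final comparison $|z(t_1)-b|\le|z(t_1)-\bar b(t_1)|+\rho$.

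The main obstacle, then, is the a priori unboundedness of $z$: one cannot naively apply Gr\"onwall because the Lipschitz "constant" of the cubic is quadratic in $z$. The resolution is to exploit that the perturbed vector field still points \emph{toward} $\bar b(t)$ whenever $|z|$ is large (the cubic dominates the bounded perturbation $E$), so $z$ cannot escape a fixed compact set depending only on $\bar z_{0}$, $b$, $\rho$, $\gamma$; on that compact set the Lipschitz bound is uniform and the Gr\"onwall/monotonicity argument above goes through, yielding $|z(t_{1})-b|<\rho+\gamma+\delta(t_{1}-t_{0})$ independently of $\bar z_{0}$ exactly as in the unperturbed case. I would organize the write-up as: (1) reduce to $w(t_0)=0$; (2) establish an a priori bound trapping $z$ in a compact interval using the inward-pointing cubic; (3) run the monotonicity-plus-Gr\"onwall estimate on that interval; (4) conclude by the triangle inequality through $\bar b(t_1)$.
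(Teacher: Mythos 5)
There is a genuine gap, and it sits exactly at the two places where the constant $\rho$ has to be produced. First, the Gr\"onwall route cannot give it: after your (correct) observation that the dissipative bracket $c\phi(t)\bigl[(b-z)^{3}-(b-y)^{3}\bigr]$ has a good sign, you are left with the cross term $c\,\phi(t)\bigl\vert(\bar{b}(t)-z)^{3}-(b-z)^{3}\bigr\vert$, which after integration contributes on the order of $c\,\rho\,(z-b)^{2}\int_{t_{0}}^{t_{1}}\phi$. The lemma must hold for \emph{every} $c$ satisfying (\ref{Eq:Target_Def_c}), in particular arbitrarily large $c$, and even at the minimal admissible $c$ one has $c\int\phi\geq 1/(2\gamma^{2})$, so this contribution is at least of order $\rho(\rho+\gamma)^{2}/\gamma^{2}$ and grows with $c$; no compactness trick removes the factor of $c$, so this route cannot yield the clean ``$+\rho$''. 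Second, the fallback you pivot to --- prove $\vert z(t_{1})-\bar{b}(t_{1})\vert<\gamma+\delta(t_{1}-t_{0})$ and finish by the triangle inequality at $\bar{b}(t_{1})$ --- is false as stated: $\bar{b}$ is an arbitrary function with $\vert\bar{b}(t)-b\vert\leq\rho$, so it may equal $b+\rho$ on all of $[t_{0},t_{1})$ (dragging $z$ near $b+\rho$) and equal $b-\rho$ at $t=t_{1}$, making $\vert z(t_{1})-\bar{b}(t_{1})\vert$ close to $2\rho+\gamma$. The $\rho$ cannot be routed through the value of $\bar{b}$ at the arrival time; it must enter as an enlargement of the target band around the fixed point $b$. (A minor further wrinkle: $\phi$ may vanish on subintervals, so the cubic does not dominate $E$ pointwise; any trapping statement already needs the additive allowance $\delta(t-t_{0})$.)

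The argument that works is a one-sided barrier/comparison with the shifted targets $b\pm\rho$, which is essentially what \cite{GCB08} does. To get the upper bound $z(t_{1})<b+\rho+\gamma+\delta(t_{1}-t_{0})$: whenever $z(t)>b+\rho$ one has $z-\bar{b}(t)\geq z-(b+\rho)>0$, hence $z'\leq -c\bigl(z-(b+\rho)\bigr)^{3}\phi(t)+\delta$. If $z$ stays above $b+\rho+\gamma$ on all of $[t_{0},t_{1}]$, comparison with the unperturbed targeting equation with target $b+\rho$ (the separation-of-variables computation behind Lemma~\ref{Lem:Target}) plus the accumulated drift $\int_{t_{0}}^{t_{1}}\vert E\vert\leq\delta(t_{1}-t_{0})$ forces $z(t_{1})<b+\rho+\gamma+\delta(t_{1}-t_{0})$; if instead $z$ drops below $b+\rho+\gamma$ at some time $s$, then afterwards, while $z>b+\rho$, the cubic term is nonpositive and $z'\leq\delta$, so $z(t_{1})\leq b+\rho+\gamma+\delta(t_{1}-s)$. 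The lower bound is symmetric. Note that $\rho$ enters only as a shift of the attractor band and picks up no factor of $c$ --- this is the structural point your Gr\"onwall/triangle route misses; your sign observation for the cubic is correct and is indeed why no exponential factor appears, but the decisive device is the barrier at $b\pm\rho$, not a comparison against $\bar{b}(t_{1})$ or against the unperturbed solution started at $\bar{z}_{0}$.
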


Proceeding along the lines of the argument presented in \cite{GCB08}, we now
show how the map given by Theorem \ref{Th:one-dim-Turing-map} can be iterated
by a 2-dimensional ODE. Although our objective is to obtain an analytic
function $g_{M}$ defining an ODE $z^{\prime}=g_{M}(t,z)$ which simulates a
given Turing machine $M$, in a first step we iterate the map given by Theorem
\ref{Th:one-dim-Turing-map} by using a non-analytic ODE.

Following the approach provided in \cite[p.~37]{Cam02b}, let $\theta
:\mathbb{R}\rightarrow\mathbb{R}$ be the $C^{\infty}$ function defined by
\begin{equation}
\theta(x)=0\text{ if }x\leq0,\text{ \ \ }\theta(x)=e^{-\frac{1}{x}}\text{ if
}x\geq0. \label{Eq:theta}
\end{equation}
Next we define the $C^{\infty}$ function $v:\mathbb{R}\rightarrow\mathbb{R}$
given by
\begin{equation}
v(0)=0,\text{ }v^{\prime}(x)=\bar{c}\theta(-\sin2\pi x), \label{Def:r}
\end{equation}
where
\[
\bar{c}=\left(  \int_{0}^{1}\theta(-\sin2\pi x)dx\right)  ^{-1}=\left(
\int_{1/2}^{1}e^{\frac{1}{\sin2\pi x}}dx\right)  ^{-1}
\]
The function $v$ has the property that $v(x)=n$, whenever $x\in\lbrack
0,n+1/2]$. We now get the following lemma.

\begin{lemma}
\label{Lem:r}The $C^{\infty}$ function $r:\mathbb{R}\rightarrow\mathbb{R}$
defined by $r(x)=v(x+1/4)$ has the property that $r(x)=n$, whenever
$x\in\lbrack n-1/4,n+1/4]$, for all integers $n$.
\end{lemma}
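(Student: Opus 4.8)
The plan is to deduce Lemma \ref{Lem:r} directly from the already-established properties of the functions $\theta$ and $v$ defined in (\ref{Eq:theta}) and (\ref{Def:r}), together with the stated fact that $v(x)=n$ whenever $x\in[0,n+1/2]$. Since $r$ is defined simply as the shift $r(x)=v(x+1/4)$, and $v$ is $C^{\infty}$ (being the antiderivative of the $C^{\infty}$ function $\bar c\,\theta(-\sin 2\pi x)$), $r$ is manifestly $C^{\infty}$ as a composition of $v$ with the translation $x\mapsto x+1/4$.

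The heart of the argument is to verify the "plateau" property $r(x)=n$ for $x\in[n-1/4,n+1/4]$. First I would establish this for $n=0$: if $x\in[-1/4,1/4]$ then $x+1/4\in[0,1/2]$, and since $v(x)=0$ for $x\in[0,1/2]$ (the $n=0$ case of $v$'s stated property, or directly: $v(0)=0$ and $v'(x)=\bar c\,\theta(-\sin 2\pi x)=0$ on $[0,1/2]$ because $-\sin 2\pi x\le 0$ there), we get $r(x)=v(x+1/4)=0$. For general integer $n\ge 1$, if $x\in[n-1/4,n+1/4]$ then $x+1/4\in[n-1/4+1/4,\,n+1/4+1/4]=[n,n+1/2]$; but I actually want $v$ constant equal to $n$ on this interval, and the stated property gives $v(x)=n$ on $[0,n+1/2]$, hence in particular on $[n,n+1/2]$, so $r(x)=n$. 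The case $n\le -1$ is symmetric: one checks from the definition that $v$ is odd (since $\theta(-\sin 2\pi x)$ is even, its antiderivative vanishing at $0$ is odd), so $v(x)=n$ for $x\in[n-1/2,0]$ when $n$ is a negative integer, i.e.\ on $[n-1/2,n+1/2]\supseteq[n,n+1/2]$ after translating, giving $r(x)=n$ on $[n-1/4,n+1/4]$.

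There is no real obstacle here — the lemma is essentially a restatement of the construction of $v$ with a quarter-integer shift so that the plateaus become centered at the integers rather than running from $0$. The only point requiring a line of care is the behaviour for negative $n$, which was not made explicit in the description of $v$; I would handle it by noting the oddness of $v$ (or, equivalently, by observing that $v'(x)=\bar c\,\theta(-\sin 2\pi x)$ is supported on $\bigcup_{k\in\mathbb Z}[k+1/2,k+1]$ and has integral exactly $1$ over each such unit-length gap between consecutive plateaus, so $v$ increases by exactly $1$ across each interval $[k+1/2,k+1]$ and is flat elsewhere, making all its values at plateaus integers with $v\equiv 0$ near $0$). Assembling these observations yields $r(x)=n$ on $[n-1/4,n+1/4]$ for every $n\in\mathbb Z$, which is the claim.
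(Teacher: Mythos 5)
Your argument is essentially the one the paper leaves implicit: shift by $1/4$ and invoke the plateau property of $v$, so the conclusion and the overall route are fine. Two cautions, though. First, the paper's displayed property ``$v(x)=n$ whenever $x\in[0,n+1/2]$'' is evidently a typo for $x\in[n,n+1/2]$ (read literally it is self-contradictory), so leaning on it verbatim, as you do for $n\geq 1$, is shaky; the parenthetical at the end of your proposal is the correct derivation and should be the actual proof: $v'(x)=\bar c\,\theta(-\sin 2\pi x)$ vanishes on every interval $[k,k+1/2]$ and, by periodicity and the normalization of $\bar c$, has integral exactly $1$ over every interval $[k+1/2,k+1]$ (these ramps have length $1/2$, not $1$ as you wrote), so with $v(0)=0$ one gets $v\equiv n$ on $[n,n+1/2]$ for every $n\in\mathbb{Z}$, positive or negative, and then $r(x)=v(x+1/4)=n$ on $[n-1/4,n+1/4]$. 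Second, the oddness claim you offer as the primary fix for negative $n$ is false: $\theta(-\sin 2\pi(-x))=\theta(\sin 2\pi x)$, which is not $\theta(-\sin 2\pi x)$, so $v'$ is not even and $v$ is not odd (for instance $v\equiv-1$ on $[-1,-1/2]$, while $v(3/4)$ lies strictly in $(0,1)$, so $v(-3/4)\neq -v(3/4)$). Since the ramp/plateau computation already covers all integers $n$, simply delete the oddness remark and promote the computation; the proof is then complete and matches what the paper takes as immediate.
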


Note that the function $r$ can be seen as a function that returns the integer
part of a real number around a $1/4$-vicinity of an integer.

We now consider the ODE

\begin{equation}
\left\{
\begin{array}
[c]{rl}
z_{1}^{\prime} & =\tilde{c}(\tilde{f}(r(z_{2}))-z_{1})^{3}\theta(\sin2\pi
t),\\
z_{2}^{\prime} & =\tilde{c}(r(z_{1})-z_{2})^{3}\theta(-\sin2\pi t),
\end{array}
\right.  \label{iteration}
\end{equation}
where $\tilde{f}:\mathbb{R}\rightarrow\mathbb{R}$ is an extension of the
function $f:\mathbb{N}\rightarrow\mathbb{N}$, $z_{1}(0)=z_{2}(0)=x_{0}
\in\mathbb{N}$ and $\tilde{c}$ is a constant yet to be defined. We will next
show that (\ref{iteration}) iterates the map $f$ near integers. Its behavior
is depicted in Fig.\ \ref{fig:iteration-exp} when iterating the exponential
function $2^{x}$. Suppose that $t\in\lbrack0,1/2]$. Then $z_{2}^{\prime}(t)=0$
and thus $z_{2}(t)=x_{0}$ and $r(z_{2})=x_{0}$. In this manner, the first
equation of (\ref{iteration}) behaves like the targeting equation
(\ref{Eq:ODE_target}), where $b=\tilde{f}(r(z_{2}))=f(x_{0})$, $t_{0}=0$,
$t_{1}=1/2$, and $\phi(t)=\theta_{j}(\sin2\pi t)$ and $\tilde{c}$ has to
satisfy the condition (\ref{Eq:Target_Def_c}) for $c$. Now note that $\sin2\pi
t\geq1/\sqrt{2}$ and thus $-1/\sin(2\pi t)\geq-\sqrt{2}$ when $t\in
\lbrack1/8,3/8]$, which implies that
\begin{align*}
\int_{0}^{1/2}e^{-\frac{1}{\sin2\pi t}}dt  &  \geq\int_{1/4}^{2/8}e^{-\frac
{1}{\sin2\pi t}}dt\\
&  \geq\frac{1}{4}e^{-\sqrt{2}}\text{.}
\end{align*}
This implies that
\begin{align*}
\frac{1}{2\gamma^{2}\int_{0}^{1/2}\theta(\sin2\pi t)dt}  &  =\frac{1}
{2\gamma^{2}\int_{0}^{1/2}e^{-\frac{1}{\sin2\pi t}}dt}\\
&  \leq\frac{1}{2\gamma^{2}\int_{0}^{1/2}e^{-\frac{1}{\sin2\pi t}}dt}\\
&  \leq\frac{2e^{\sqrt{2}}}{\gamma^{2}}.
\end{align*}
Therefore, due to Lemma \ref{Lem:Target} and (\ref{Eq:Target_Def_c}), if we
take $\tilde{c}\geq2e^{\sqrt{2}}/\gamma^{2}$ the first equation of
(\ref{iteration}) becomes a targetting equation on the time interval
$[0,1/2]$\ associated to a targetting error $\gamma$. In particular, if we
pick $\gamma=1/5$, then we can pick $\tilde{c}=206$ such that
(\ref{Eq:Target_Def_c}) holds and thus
\begin{figure}
    \begin{center}
        \includegraphics[width=6cm]{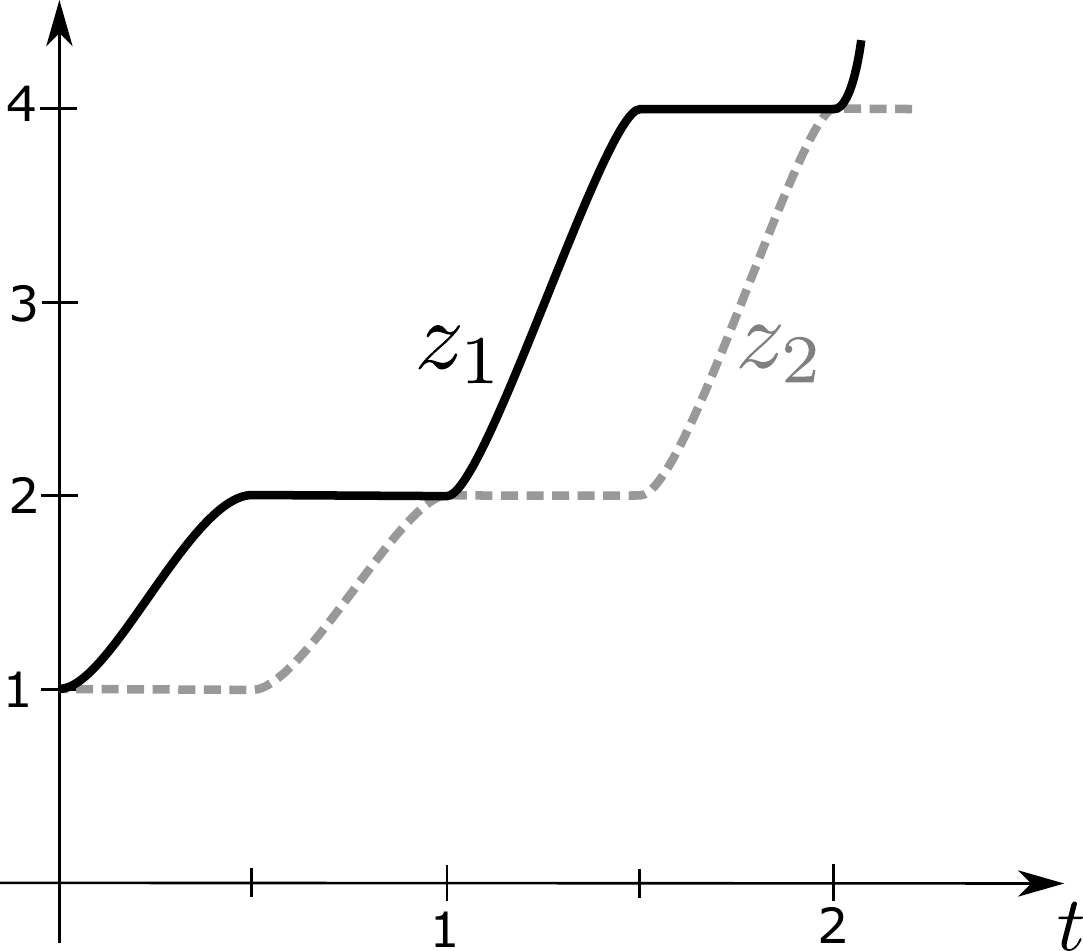}
        \caption{Iterating the exponential function $2^x$ with an ODE.}
        \label{fig:iteration-exp}
    \end{center}
\end{figure}
\[
\left\vert b-z_{1}(1/2)\right\vert =\left\vert f(x_{0})-z_{1}(1/2)\right\vert
\leq1/5\text{.}
\]
On the time interval $[1/2,1]$, the roles of $z_{1}$ and $z_{2}$ are switched:
we will have $z_{1}^{\prime}(t)=0$ which implies that $z_{1}(t)=z_{1}(1/2)$
for all $t\in\lbrack1/2,1]$. We thus conclude that $r(z_{1}(t))=f(x_{0})$ for
$t\in\lbrack1/2,1]$ and therefore the second equation of (\ref{iteration})
behaves like the targeting equation (\ref{Eq:ODE_target}) with $b=r(z_{1}
(t))=f(x_{0})$, $t_{0}=1/2$, $t_{1}=1$, and $\phi(t)=\theta_{j}(-\sin2\pi t)$
and $\tilde{c}=206$. Again, using Lemma \ref{Lem:Target} and similar arguments
as in the previous case, we conclude that
\[
\left\vert b-z_{2}(1)\right\vert =\left\vert f(x_{0})-z_{2}(1)\right\vert
\leq1/5\text{.}
\]
In the following time interval $[1,3/2]$, the cycle repeats itself and we have
$z_{2}^{\prime}(t)=0$ and thus $\tilde{f}(r(z_{2}))=f(f(x_{0}))=f^{[2]}
(x_{0})$. Using a similar reasoning, we conclude that for all $k\in
\mathbb{N}_{0}$ we have $z_{2}(t)=f^{[k]}(x_{0})$ for all $t\in\lbrack
k,k+1/2]$, (assuming $f^{[0]}(x)=x$) and $z_{1}(t)=f^{[k+1]}(x_{0})$ for all
$t\in\lbrack k+1/2,k+1]$.

We thus have shown how to iterate (the extension of) a discrete function
$f:\mathbb{N}\rightarrow\mathbb{N}$ with an ODE. Nonetheless, the ODE is still
not analytic as required. Remark that if the ODE is analytic, then
$z_{1}^{\prime}$ and $z_{2}^{\prime}$ cannot be $0$ in half-unit intervals,
since it is well-known that if an analytic function ($z_{1}^{\prime}$ and
$z_{2}^{\prime}$ in our case) takes the value zero in a non-empty interval,
then this function has to be identically equal to $0$ on all its domain.
Therefore, instead of requiring that $z_{1}^{\prime}$ and $z_{2}^{\prime}$
take the value $0$ in alternating half-unit intervals, we require that these
functions take values \emph{very close} to zero. Since the map $g_{M}$ given
by Theorem \ref{Th:one-dim-Turing-map} is robust to perturbations on its
input, this will ensure that the whole simulation of $g_{M}$ with a
two-dimensional ODE can still be performed, even if $z_{1}$ and $z_{2}$ are
not strictly constant in the half-intervals $[k+1/2,k+1]$ and $[k,k+1/2]$,
respectively. However, we still have to ensure that $\left\vert z_{1}^{\prime
}(t)\right\vert $ and $\left\vert z_{2}^{\prime}(t)\right\vert $ are
sufficiently small in the half-unit intervals of interests to guarantee that
the iteration can be carried faithfully. To better understand how this can be
achieved, we have to analyze the effects of introducing perturbations in
(\ref{iteration}), with the help of Lemma \ref{Lem:Target:pertub} since now
the \textquotedblleft targets\textquotedblright\ will be slightly perturbed.

Proceeding as in \cite{GCB08}, the non-analytic function $\theta_{j}(\sin2\pi
t)$ in the first equation of (\ref{iteration}) is replaced by an analytic
periodic function with period 1 which is close to zero when $t\in
\lbrack1/2,1]$. As shown in \cite{GCB08}, this can be done by considering the
function $s$ defined by
\begin{equation}
s(t)=\frac{1}{2}\left(  \sin^{2}(2\pi t)+\sin(2\pi t)\right)  .
\label{Eq:Def_s}
\end{equation}
which ranges between $0$ and $1$ in $[0,1/2]$ (and, in particular, between
$4/5$ and $1$ when $x\in\lbrack0.17,0.33]$), and between $-\frac{1}{8}$ and
$0$ on the time interval $[1/2,1]$. Then we take the analytic function
$\phi:\mathbb{R}^{2}\rightarrow\lbrack0,1]$ defined by
\[
\phi(t,y)=\Psi(s(t),y),
\]
we conclude that $\int_{0}^{1/2}\phi(t,y)dt>4/5\times(0.33-0.17)=0.128>0$
(assuming that $y\geq5)$ and $\left\vert \phi(t,y)\right\vert <e^{-y}/8$ for
all $t\in\lbrack1/2,1]$ (i.e.~$y$ allows us to provide an error bound for
$z_{1}^{\prime}(t)$ in the time interval $[1/2,1]$). Since $\phi$ has period
$1$ on $t$, we conclude that $\int_{k}^{k+1/2}\phi(t,y)dt>0.128>0$ and
$\left\vert \phi(t,y)\right\vert <e^{-y}/8$ for all $t\in\lbrack k+1/2,k+1]$,
where $k\in\mathbb{N}$ is arbitrary and $y\geq5$. Therefore $\phi$ satisfies
the assumptions of the function $\phi$ in Lemma \ref{Lem:Target:pertub} on the
time interval $[0,1/2]$.

We can now proceed with the main construction that simulates the
iteration of the map given by Theorem \ref{Th:one-dim-Turing-map}
with an analytic ODE. Take $\gamma>0$ to be a value such that
$2\gamma+\delta/2\leq1/5$, and let $g_{M}$ be the map given by
Theorem \ref{Th:one-dim-Turing-map} (use as value for $\delta$ in
the statement of the Theorem \ref{Th:one-dim-Turing-map} the value
$\eta/2<1/5$, where $\eta=\left(  \gamma+\delta\right) /2+1/5<2/5$).
Consider the ODE $z^{\prime}=h_{M}(t,z)$ given by
\begin{align}
z_{1}^{\prime}  &  =c_{1}(z_{1}-\sigma^{\lbrack l]}\circ g_{M}\circ
\sigma^{\lbrack l]}(z_{2}))^{3}\,\phi_{1}(t,z_{1},z_{2}),
\label{Eq:Simulation}\\
z_{2}^{\prime}  &  =c_{2}(z_{2}-\sigma^{\lbrack l]}(z_{1}))^{3}\,\phi
_{2}(t,z_{1},z_{2}),\nonumber
\end{align}
where $z(t)=(z_{1}(t),z_{2}(t))\in\mathbb{R}^{2}$, with initial conditions
$z_{1}(0)=\bar{x}_{0}$, $z_{2}(0)=\bar{y}_{0}$, where $\bar{x}_{0},\bar{y}
_{0}\in\mathbb{R}$ are approximations of some initial configuration $x_{0}$
satisfying $\left\vert \bar{x}_{0}-x_{0}\right\vert \leq1/5$ and $\left\vert
\bar{y}_{0}-x_{0}\right\vert \leq1/5$, $l\in\mathbb{N}$ is such that
$\sigma^{\lbrack l]}(\eta)\leq\gamma<1/5$ (see Lemma \ref{Lemma:sigma}), and
\begin{align}
\phi_{1}(t,z_{1},z_{2})  &  =\phi\left(  t,\tfrac{c_{1}}{\gamma}(z_{1}
-\sigma^{\lbrack l]}\circ g_{M}\circ\sigma^{\lbrack l]}(z_{2}))^{4}
+\tfrac{c_{1}}{\gamma}+10\right)  ,\nonumber\\
\phi_{2}(t,z_{1},z_{2})  &  =\phi\left(  -t,\tfrac{c_{2}}{\gamma}(z_{2}
-\sigma^{\lbrack l]}(z_{1}))^{4}+\tfrac{c_{2}}{\gamma}+10\right)  ,
\label{SistemaCont}
\end{align}
and $c_{1},c_{2}$ are constants associated to a targeting error of value
$\gamma$ (i.e.~they are chosen such as the constant $c$ in
(\ref{Eq:Target_Def_c}) and by noting that $\int_{0}^{1/2}\phi(t,y)>0.128$, we
can take $c_{1},c_{2}=4\gamma^{-2}$). Note that, since $\phi$ satisfies the
assumptions of function $\phi$ in Lemma \ref{Lem:Target:pertub}, the same will
happen to $\phi_{1}$ and $\phi_{2}$. The ODE (\ref{Eq:Simulation}) simulates
the Turing machine $M$ similarly to the ODE (\ref{iteration}), by iterating
$g_{M}$. However (\ref{SistemaCont}) has a more complicated expression, since
it has to deal with the fact that $z_{1}^{\prime}(t)$ and $z_{2}^{\prime}(t)$
are not exactly zero in half-unit intervals.

Since we want that the ODE $z^{\prime}=h_{M}(t,z)$ to robustly simulate $M$,
let us assume that the right hand-side of the equations in
(\ref{Eq:Simulation}) can be subject to an error of absolute value not
exceeding $\delta$. To start the analysis of the behavior of
(\ref{Eq:Simulation}), let us first consider the time interval $[0,1/2]$.
Since$\ \left\vert x\right\vert ^{3}\leq x^{4}+1$ for all $x\in\mathbb{R}$, we
conclude that $\phi_{2}$ is less than $\min(\gamma(c_{2}\left\vert {z}
_{2}-\sigma^{\lbrack n_{2}]}({z}_{1})\right\vert ^{3})^{-1},1/10)$. This
implies, together with the assumption that $z_{2}^{\prime}$ in
(\ref{SistemaCont}) is perturbed by an amount not exceeding $\delta$, that
$\left\vert z_{2}^{\prime}(t)\right\vert \leq\gamma+\delta$ for all
$t\in\lbrack0,1/2]$ which implies that $\left\vert z_{2}(t)-z_{2}
(0)\right\vert \leq(\gamma+\delta)/2$ for all $t\in\lbrack0,1/2]$. Since the
initial condition $\bar{x}_{0}$ satisfies $\left\vert \bar{x}_{0}
-x_{0}\right\vert \leq1/5$ where $x_{0}\in\mathbb{N}$, we conclude that
\[
\left\vert {z}_{2}(t)-x_{0}\right\vert \leq\frac{\gamma+\delta}{2}
+1/5=\eta<\frac{2}{5}\text{ \ \ \ for all }t\in\lbrack0,1/2]
\]
which implies that
\[
\left\vert \sigma^{\lbrack l]}(z_{2}(t))-x_{0}\right\vert \leq\gamma<\frac
{1}{5}\text{ \ \ \ for all }t\in\lbrack0,1/2].
\]
Due to Theorem \ref{Th:one-dim-Turing-map}, we conclude that
\[
\left\vert g_{M}\circ\sigma^{\lbrack l]}(z_{2}(t))-\psi(x_{0})\right\vert
\leq\frac{1}{5}\text{ \ \ \ for all }t\in\lbrack0,1/2].
\]
Since $\sigma^{\lbrack l]}(\eta)\leq\gamma$ and $\eta>1/5$, we get that
$\left\vert \sigma^{\lbrack l]}\circ g_{M}\circ\sigma^{\lbrack l]}
(z_{2}(t))-g_{M}(x_{0})\right\vert <\gamma$ for all $t\in\lbrack0,1/2]$. Then
the behavior of $z_{1}$ is given by Lemma \ref{Lem:Target:pertub} and
\begin{equation}
\left\vert {z}_{1}\left(  \frac{1}{2}\right)  -\psi(x_{0})\right\vert
<2\gamma+\delta/2\leq\frac{1}{5}. \label{Eq:aux4}
\end{equation}

In the next half-unit interval $[1/2,1]$ the roles of ${z}_{1}$ and ${z}_{2}$
are switched as before and one concludes, using similar arguments to those
used in the time interval $[0,1/2]$ that $\left\vert z_{1}^{\prime
}(t)\right\vert \leq\gamma+\delta$ for $t\in\lbrack1/2,1]$. Therefore
$\left\vert z_{1}(t)-z_{1}(1/2)\right\vert \leq(\gamma+\delta)/2$ for all
$t\in\lbrack1/2,1]$. This inequality together with (\ref{Eq:aux4}) yields that
\[
\left\vert {z}_{1}(t)-\psi(x_{0})\right\vert <1/5+(\gamma+\delta)/2=\eta\text{
\ \ \ for all }t\in\lbrack1/2,1].
\]
Since $\sigma^{\lbrack l]}(\eta)\leq\gamma$, we conclude that $\left\vert
\sigma^{\lbrack l]}({z}_{1}(t))-\psi(x_{0})\right\vert \leq\gamma$ and Lemma
\ref{Lem:Target:pertub} gives us
\begin{equation}
\left\vert {z}_{2}\left(  1\right)  -\psi(x_{0})\right\vert <2\gamma
+\delta/2\leq\frac{1}{5}. \label{Eq:aux5}
\end{equation}
On the time interval $[1,3/2]$, the roles of $z_{1}$ and $z_{2}$ are again
switched. There we will have $\left\vert z_{2}^{\prime}(t)\right\vert
\leq\gamma+\delta$ for all $t\in\lbrack1,3/2]$ which implies that $\left\vert
z_{2}(t)-z_{2}(1)\right\vert \leq(\gamma+\delta)/2$ for all $t\in
\lbrack1/2,1]$. This inequality together with (\ref{Eq:aux5})\ gives us
\[
\left\vert {z}_{2}(t)-\psi(x_{0})\right\vert <1/5+(\gamma+\delta)/2=\eta\text{
\ \ \ for all }t\in\lbrack1,3/2].
\]
We thus conclude that this analysis can be repeated in subsequent time
intervals and thus that for all $j\in\mathbb{N}$, if $t\in\lbrack j,j+\frac
{1}{2}]$ then $\left\vert z_{2}(t)-\psi^{\lbrack j]}(x_{0})\right\vert
\leq1/5$. This concludes the proof.

\begin{remark}
\label{Remark:halting}From the proof of Theorem \ref{Th:two-dim-ODE-Turing},
we conclude that if the Turing machine halts after $n_{0}$ steps with
configuration $c_{h}$ and if we assume that $\psi(c_{h})=c_{h}$, then for any
$n\in\mathbb{N}$ satisfying $n\geq n_{0}+1$, we have
\[
\left\vert {z}_{2}\left(  n\right)  -c_{h}\right\vert \leq\frac{1}{5}.
\]
Furthermore, on the half-unit time interval $[n,n+1/2]$, we will get that
\[
\left\vert \sigma^{\lbrack l]}({z}_{1}(t))-c_{h}\right\vert <\gamma<1/5
\]
for $t\in\lbrack n,n+1/2]$. Assuming that $\delta=0$ in Theorem
\ref{Th:two-dim-ODE-Turing}, we then conclude from (\ref{Eq:Simulation}) that
$z_{2}(t)$ starts its trajectory from a value $1/5$-near to $c_{h}$ and will
monotically converge to a value $\sigma^{\lbrack l]}({z}_{1}(t))$ which,
although may change with time, will never leave an $1/5$-vicinity of $c_{h}$
for $t\in\lbrack n,n+1/2]$. This implies that
\begin{equation}
\left\vert {z}_{2}\left(  t\right)  -c_{h}\right\vert \leq\frac{1}{5}
\label{Eq:error}
\end{equation}
for all $t\in\lbrack n,n+1/2]$. Since (\ref{Eq:error}) holds on $[n,n+1/2]$ as
we have seen from the proof of Theorem \ref{Th:two-dim-ODE-Turing}, we
conclude that (\ref{Eq:error}) holds for $t\geq n_{0}+1/2$.
\end{remark}

\section{Simulating Turing machines over compact sets\label{Sec:Compact}}

So far all our simulations of Turing machines are carried out over
the non-compact set $\mathbb{R}^{n}$. In this section we show that
it is possible to simulate Turing machines with ODEs on the
2-dimensional compact sphere
$\mathbb{S}^{2}=\{x\in\mathbb{R}^{3}:\left\Vert x\right\Vert =1\}$.
The technique used in the construction is based on a similar result
proved in \cite{CMP21}. It is shown in \cite{CMP21} that there is a
computable polynomial vector field on the sphere
$\mathbb{S}^{n}=\{x\in\mathbb{R}^{n+1}:\left\Vert x\right\Vert =1\}$
simulating a universal Turing machine; in other words, the
polynomial vector field is Turing complete, where $n\geq17$ is
arbitrary. The underlying idea of the construction presented in
\cite{CMP21} is to map a vector field defined in $\mathbb{R}^{n}$ to
$\mathbb{S}^{n}$ using the stereographic projection, and to remove
the singularity at the north pole by using a suitable
reparametrization on the polynomial vector fields. We show in this
section that the dimension can be lowered from $n\geq 17$ to $n=2$
at the cost of using a non-polynomial $C^{\infty}$ vector field.

The notion introduced in the following definition will be used
throughout this section.

\begin{definition}
Let $f:\mathbb{R}^{n+k}\rightarrow\mathbb{R}^{j}$, where $n,k,j\in
\mathbb{N}_{0}$, with $j\geq1$. Given an expression $f(x_{1},\ldots
,x_{n},y_{1},\ldots,y_{k})$, we say that $f$ is bounded by a constant on the
variables $x_{1},\ldots,x_{n}$ and bounded by a polynomial on the variables
$y_{1},\ldots,y_{k}$ if
\[
\left\Vert f(x_{1},\ldots,x_{n},y_{1},\ldots,y_{k})\right\Vert \leq
p(y_{1},\ldots,y_{k})
\]
for all $(x_{1},\ldots,x_{n},y_{1},\ldots,y_{k})\in\mathbb{R}^{n+k}$ in the
domain of $f$.
\end{definition}

For example $g(t,x)=x\sin(2\pi t)$ is bounded by a constant on $t$ and
polynomially bounded on $x$ and $h(t)=\sin(2\pi t)$ is bounded by a constant
on $t$ (to simplify notation, in this latter case where $h$ is not
polynomially bounded on any other variables, we will just say that $h$ is
bounded by a constant). Some functions which are bounded by a constant include
$\sin$, $\cos$, $\arctan$.

The next result shows when a Turing universal vector field can be
defined on $\mathbb{S}^{n}$.

\begin{theorem}
\label{Th:compact_simulation}Let
\begin{equation}
y^{\prime}=f(t,y) \label{Eq:ODE}
\end{equation}
be a $C^{1}$-computable ODE simulating a Turing machine $M$, where
$f:\mathbb{R}^{n+1}\rightarrow\mathbb{R}^{n}$ is a $C^{\infty}$
function with the property that both $f$ and all its partial
derivatives $\frac
{\partial^{\left\vert k\right\vert }f}{\partial t^{k_{0}}\partial y_{1}
^{k_{1}}\ldots\partial y_{n}^{k_{n}}}$ are bounded by a constant on
$t$ and are polynomially bounded on the variables
$x_{1},\ldots,x_{n}$, assuming that the argument of $f$ is $(t,x)\in\mathbb{R}^{n+1}$, where
$k_{0},k_{1},\ldots,k_{n}\in\mathbb{N}_{0}$,
$k=(k_{0},k_{1},\ldots,k_{n})$, and $\left\vert k\right\vert
=k_{0}+k_{1}+\ldots+k_{n}$. Then from $f$ one can compute  a
$C^{\infty}$ vector field $F$ defined on $\mathbb{S}^{n}$ that also
simulates $M$.
\end{theorem}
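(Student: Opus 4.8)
The plan is to transport the dynamics of \eqref{Eq:ODE} to $\mathbb{S}^n$ by stereographic projection and then kill the resulting singularity at the north pole by a $C^\infty$ reparametrization, following the scheme of \cite{CMP21} but replacing the polynomial reparametrization used there by a \emph{flat} one; this is exactly what allows the sphere dimension to be lowered (to $n=2$ in the case of Theorem \ref{Th:two-dim-ODE-Turing}). Write $N=(0,\ldots,0,1)\in\mathbb{S}^n$ and let $\pi:\mathbb{R}^n\rightarrow\mathbb{S}^n\setminus\{N\}$ be the inverse stereographic projection, a diffeomorphism given by $\pi(y)=\bigl(2y,\|y\|^2-1\bigr)/(1+\|y\|^2)$. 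For each fixed $t$, push the vector field $y\mapsto f(t,y)$ forward by $\pi$ to obtain a time-dependent vector field $\tilde F(t,\cdot)=\pi_{\ast}f(t,\cdot)$ on $\mathbb{S}^n\setminus\{N\}$. By construction $\pi$ conjugates $y'=f(t,y)$ to $x'=\tilde F(t,x)$, so on $\mathbb{S}^n\setminus\{N\}$ the field $\tilde F$ already simulates $M$, with the encoding of configurations transported through $\pi$; the only problem is that $\tilde F$ need not extend continuously (let alone smoothly) over $N$.

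Next I would quantify the singularity at $N$ using the growth hypotheses. Along $x=\pi(y)$ one computes $1-x_{n+1}=2/(1+\|y\|^2)$ and hence $\|x-N\|^2=4/(1+\|y\|^2)$, so $\|y\|\asymp\|x-N\|^{-1}$ as $x\to N$, while $\|D\pi(y)\|\asymp(1+\|y\|^2)^{-1}\asymp\|x-N\|^2$. Since $\|f(t,y)\|\leq p(y)=O(\|y\|^{\deg p})$ \emph{uniformly in} $t$ — this is precisely the hypothesis that $f$ is bounded by a constant on $t$ and polynomially bounded on the space variables — it follows that $\|\tilde F(t,x)\|=O(\|x-N\|^{\,2-\deg p})$ near $N$, i.e.\ a blow-up of at most polynomial order in $\|x-N\|^{-1}$. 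Differentiating $\tilde F(t,x)=D\pi(\pi^{-1}(x))\,f(t,\pi^{-1}(x))$ and repeatedly applying the chain and product rules, together with the assumption that \emph{every} partial derivative of $f$ is also bounded by a constant on $t$ and polynomially bounded on the space variables, one obtains that each partial derivative $\partial^{|k|}\tilde F/\partial t^{k_0}\partial x_1^{k_1}\cdots\partial x_n^{k_n}$ likewise blows up at most polynomially in $\|x-N\|^{-1}$ near $N$, uniformly in $t$.

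Then I would reparametrize. Using the $C^\infty$ function $\theta$ from \eqref{Eq:theta}, build a standard $C^\infty$ bump $\mu:\mathbb{S}^n\rightarrow[0,1]$ that equals $1$ on $\{\|x-N\|\geq\varepsilon\}$, is strictly positive on $\mathbb{S}^n\setminus\{N\}$, and near $N$ agrees with $\theta(\|x-N\|^2)$, so that $\mu$ and all of its partial derivatives vanish at $N$ faster than any power of $\|x-N\|$. Set $F(t,x)=\mu(x)\,\tilde F(t,x)$ for $x\neq N$ and $F(t,N)=0$. In any derivative of $F$, computed by the Leibniz rule, every term carries at least one factor which is a derivative of $\mu$ (flat at $N$), the remaining factors being derivatives of $\tilde F$ (polynomial blow-up); hence every derivative of $F$ extends continuously by $0$ at $N$, so $F$ is a $C^\infty$ time-dependent vector field on all of $\mathbb{S}^n$, and it is computable from $f$ since $\pi$, $\theta$, and the operations above are. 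Outside the $\varepsilon$-neighborhood of $N$ one has $F=\tilde F$, so the simulation of $M$ is untouched there; inside, multiplication by the positive factor $\mu$ only rescales the speed of the flow, and one checks, as in \cite{CMP21}, that the resulting $F$ still simulates $M$.

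The step I expect to be the main obstacle is the $C^\infty$ extension of $F$ across the pole $N$: one must control not merely $\tilde F$ but simultaneously all of its higher partial derivatives (in $t$ and in the space variables) as $x\to N$, and it is for this reason that the theorem asks every partial derivative of $f$ — not just $f$ itself — to be bounded by a constant on $t$ and polynomially bounded on the space variables. Choosing a reparametrization factor $\mu$ that is \emph{flat} at $N$, rather than one vanishing to a fixed finite order (as a polynomial vector field, of the kind used in \cite{CMP21}, would force), is exactly what lets a single $\mu$ absorb the blow-up of $\tilde F$ and of all of its derivatives at once, independently of $\deg p$, and this is what removes the need for the large ambient dimension of \cite{CMP21}.
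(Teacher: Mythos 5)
Your overall route---stereographic projection followed by damping with a factor that is flat at the north pole, together with the quantitative observation that the polynomial blow-up (in $\left\Vert x-N\right\Vert ^{-1}$) of $\pi_{\ast}f$ and of all its derivatives is absorbed by a flat factor---is essentially the paper's route, and your singularity estimates are fine. However, there is a genuine gap in the final step, where you set $F(t,x)=\mu(x)\tilde{F}(t,x)$ and assert that multiplication by the positive factor $\mu$ ``only rescales the speed of the flow.'' That assertion is valid only for autonomous fields. Here $f$, hence $\tilde{F}$, is genuinely time-dependent (the field supplied by Theorem \ref{Th:2Dcompact-extension} contains $\sin(2\pi t)$), and for a non-autonomous equation the solutions of $x^{\prime}=\mu(x)\tilde{F}(t,x)$ are in general \emph{not} time-reparametrized solutions of $x^{\prime}=\tilde{F}(t,x)$: rescaling the spatial speed without also rescaling the time argument fed into $\tilde{F}$ changes the orbits, not merely their parametrization (already for constant $\mu=c\neq1$, the curve $x(ct)$ solves $z^{\prime}=c\tilde{F}(ct,z)$, not $z^{\prime}=c\tilde{F}(t,z)$). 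Nor does the remark that $F=\tilde{F}$ outside an $\varepsilon$-neighborhood of $N$ rescue the simulation: the integer encodings of configurations are unbounded in $\mathbb{R}^{n}$, so their images under $\pi$ accumulate at $N$, and the trajectories relevant to the simulation enter the region where $\mu<1$ for all but finitely many configurations. This is exactly the point at which the paper introduces the auxiliary system \eqref{Eq:ODE_parameterization}: it adds the reparametrized time as a state variable $\tau$ with $\tau^{\prime}=K(\bar{x})>0$ and feeds $\tau$ (not the external time) into $f$, so that by Picard-Lindel\"{o}f uniqueness $\bar{x}(t)=x(\tau(t))$, i.e.\ one genuinely obtains the same curves up to reparametrization. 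Your construction needs the same device, e.g.\ replace $F(t,x)=\mu(x)\tilde{F}(t,x)$ by the system $s^{\prime}=\mu(x)$, $x^{\prime}=\mu(x)\tilde{F}(s,x)$ on $\mathbb{R}\times\mathbb{S}^{n}$.

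A second, smaller omission: ``simulates $M$'' requires knowing at which times the $j$th configuration can be read off, and after reparametrization these become $\tau^{-1}(a_{j})$, $\tau^{-1}(b_{j})$. The paper proves that $\tau^{-1}$ is computable from $f$ (it solves the IVP $(\tau^{-1})^{\prime}=1/K(x(t))$, $\tau^{-1}(0)=0$, and $C^{1}$-computable IVPs have computable solutions), so the simulation remains effective. Your ``one checks, as in \cite{CMP21}'' leaves precisely this unchecked, and it does require an argument, because the flat damping factor makes the slow-down near $N$ (equivalently, for large configurations) super-polynomial.
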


\begin{proof}
We recall that the (inverse) stereographic projection $\varphi:\mathbb{R}
^{n}\rightarrow\mathbb{S}^{n}\subseteq\mathbb{R}^{n+1}$ is given by
\[
\varphi(x_{1},x_{2},\ldots,x_{n})=\left(  \frac{r^{2}-1}{1+r^{2}},\frac
{2x_{1}}{1+r^{2}},\frac{2x_{2}}{1+r^{2}},\ldots,\frac{2x_{n}}{1+r^{2}}\right)
\]
where $r^{2}=x_{1}^{2}+x_{2}^{2}+\ldots+x_{n}^{2}$. Suppose that
$f(t,x)=(f_{1}(t,x),\ldots,f_{n}(t,x))$. Then we can write the vector field
defined by $f$ on $\mathbb{R}^{n}$ as
\[
f=\sum_{i=1}^{n}f_{i}\frac{\partial}{\partial x_{i}}.
\]
We recall that if $g:M\rightarrow N$ is a $C^{1}$ map between two manifolds
$M=\mathbb{R}^{n}$ and $N=\mathbb{R}^{k}$, then for each $p\in M$ the map $g$
induces a linear map $g_{\ast}:T_{p}M\rightarrow T_{g(p)}N$ from the tangent
space $T_{p}M$ of $M$ at $p$ to the tangent space of $N$ at $g(p)$. We also
recall that $(\left.  \partial/\partial x_{1}\right\vert _{p},\ldots,\left.
\partial/\partial x_{n}\right\vert _{p})$ forms a basis for $T_{p}M\ $and,
similarly, if $(\bar{x}_{1},\ldots,\bar{x}_{k})$ are coordinates for
$N=\mathbb{R}^{k}$, then $(\left.
\partial/\partial\bar{x}_{1}\right\vert _{g(p)},\ldots,\left.
\partial/\partial\bar{x}_{k}\right\vert _{g(p)})$ forms a basis for
$T_{g(p)}N$. Moreover, the matrix that (locally) defines the linear
map $g_{\ast}$, relative to the bases $(\left.  \partial/\partial
x_{1}\right\vert _{p},\ldots,\left.  \partial/\partial
x_{n}\right\vert _{p})$ and $(\left.
\partial/\partial\bar{x}_{1}\right\vert _{g(p)},\ldots,\left.
\partial/\partial\bar{x}_{k}\right\vert _{g(p)})$,\ is the Jacobian of $g$. In
the case of the map $\varphi$, and if we take
$(y_{0},y_{1},\ldots,y_{n})$ as coordinates for $\mathbb{R}^{n+1}$,
we obtain the following (note that the variable $t$ in
the expression of $f$ can be seen as a fixed parameter):
\begin{align}
\varphi_{\ast}\left(  f\right)   &  =\sum_{i=1}^{n}f_{i}\varphi_{\ast}\left(
\frac{\partial}{\partial x_{i}}\right) \nonumber\\
&  =\sum_{i=1}^{n}f_{i}\sum_{j=0}^{n}\frac{\partial\varphi_{j}}{\partial
x_{i}}\frac{\partial}{\partial y_{j}}\nonumber\\
&  =\sum_{i=1}^{n}f_{i}\cdot\left(  (1-y_{0})y_{i}\frac{\partial}{\partial
y_{0}}+(1-y_{0}-y_{i}^{2})\frac{\partial}{\partial y_{i}}-\sum
_{\substack{j=0\\j\notin\{0,i\}}}^{n}y_{i}y_{j}\frac{\partial}{\partial y_{j}
}\right)  , \label{Eq:phi_star}
\end{align}
where $f_{i}$ is evaluated at $(t,\varphi^{-1}(y_{0},y_{1},\ldots
,y_{n}))=\left(  t,\frac{y_{1}}{1-y_{0}},\ldots,\frac{y_{n}}{1-y_{0}}\right)
$. This implies that $\varphi_{\ast}\left(  f\right)  $ is a vector field of
class $C^{\infty}$, except at the north pole $y_{NP}=(1,0,\ldots,0)$ of
$\mathbb{S}^{n}$ where it is not defined. Let us now consider the following
ODE
\begin{align}
\tau^{\prime}  &  =K(\bar{x})\label{Eq:ODE_parameterization}\\
\bar{x}^{\prime}  &  =f(\tau,\bar{x})K(\bar{x})\nonumber
\end{align}
where $K:\mathbb{R}^{n}\rightarrow\mathbb{R}$ is such that $K(x)>0$ for any
$x\in\mathbb{R}^{n}$ and $\tau(0)=0$, $\bar{x}(0)=x_0$. Since $\tau^{\prime}(t)>0$, $\tau$ is
strictly increasing and thus $\tau$ admits an inverse $\tau^{-1}$. Next we
note that if $\tilde{x}=x\circ\tau$, where $x$ is a solution of (\ref{Eq:ODE})
and $\tau(0)=0$, we have that
\begin{align}
\tilde{x}^{\prime}(t)  &  =\left(  x(\tau(t))\right)  ^{\prime}\nonumber\\
&  =f(\tau(t),x(\tau(t)))\tau^{\prime}(t)\nonumber\\
&  =f(\tau(t),\tilde{x}(t))K(\bar{x}(t)). \label{Eq:ODE_parameterization_aux}
\end{align}
It is not difficult to see that if $(\tau,\bar{x})$ is a solution for
(\ref{Eq:ODE_parameterization}), then $\bar{x}$ is also a solution to
(\ref{Eq:ODE_parameterization_aux}). Since the solution of the ODE
(\ref{Eq:ODE_parameterization}) is unique, by the Picard-Lindel\"{o}f theorem,
we conclude that $\bar{x}(t)=\tilde{x}(t)=x(\tau(t))$. Furthermore, since
$\tau^{\prime}(t)>0$ for any $t\in\mathbb{R}$, we conclude that any solution
curve of (\ref{Eq:ODE}) with initial condition $y(0)=x_0$ also provides a solution curve for the last $n$ components of the solution of
(\ref{Eq:ODE_parameterization}) with initial condition $\tau(0)=0$, $\bar{x}(0)=x_0$, up to some time reparametrization, and vice versa.

Thus, by taking $K(x)=e^{-\frac{2}{1+r^{2}}}=e^{-\frac{2}{1+x_{1}^{2}
+\ldots+x_{n}^{2}}}$, we conclude that the solution curves of
\begin{align}
\tau^{\prime}  &  =e^{-\frac{2}{1+r^{2}}}\nonumber\\
x^{\prime}  &  =e^{-\frac{2}{1+r^{2}}}f(\tau,x)=h(t,x)
\label{Eq:ODE_new_simulation}
\end{align}
and of (\ref{Eq:ODE}) are the same, up to a time reparametrization $\tau$
given by the ODE $\tau^{\prime}=e^{-\frac{2}{1+r^{2}}}>0$. Note that the
right-hand side of (\ref{Eq:ODE_new_simulation}) is formed by $C^{1}
$-computable functions, which means that the solution $(\tau,x)$ to
(\ref{Eq:ODE_new_simulation}) is also computable, since the solution to a
$C^{1}$-computable system is also computable \cite{GZB07}, \cite{CG08}. Hence,
when simulating Turing machines, if the result of the $n$th step of the
computation of the Turing machine being simulated by (\ref{Eq:ODE}) can be
read in the time interval $[a_{n},b_{n}]$, then the result of the $n$th step
when the simulation is performed by (\ref{Eq:ODE_new_simulation}) can be read
on the time interval $[\tau^{-1}(a_{n}),\tau^{-1}(b_{n})]$. Note that
$\tau^{-1}$ can be computed from $\tau$ and hence from $f$. Indeed, we know
that the derivative of $\tau^{-1}$ is given by
\begin{align*}
\left(  \tau^{-1}(a)\right)  ^{\prime}  &  =\frac{1}{\tau^{\prime}(\tau
^{-1}(a))}\\
&  =\frac{1}{K(\bar{x}(\tau^{-1}(a)))}\\
&  =\frac{1}{K(x(\tau\circ\tau^{-1}(a)))}\\
&  \frac{1}{K(x(a))}.
\end{align*}
Hence $\tau^{-1}$ can be obtained as the solution of the initial-value problem
(IVP) defined by $\left(  \tau^{-1}(t)\right)  ^{\prime}=1/K(x(t))$,
$\tau^{-1}(0)=0$. Since the right-hand side of the ODE defining this IVP is
computable from $x$ and hence from $f$, we conclude that $\tau^{-1}$ is
computable from $f$ \cite{GZB07}, \cite{CG08}. In particular, if $f$ is
computable, then so is $\tau^{-1}$.

We now have (we can again assume that $t$ is a fixed parameter for
$h$,
where $h$ is given by (\ref{Eq:ODE_new_simulation}))
\[
\varphi_{\ast}\left(  h\right)  =e^{-(1-y_{0})}\varphi_{\ast}\left(  f\right)
.
\]
From (\ref{Eq:phi_star}) we conclude that (note that $\left\vert
y_{i}\right\vert \leq1$)
\begin{align*}
\left\Vert \varphi_{\ast}\left(  h\right)  (y)\right\Vert  &  =\left\Vert
e^{-(1-y_{0})}\varphi_{\ast}\left(  f\right)  (y)\right\Vert \\
&  =\left\Vert e^{-(1-y_{0})}\right\Vert \left\Vert \varphi_{\ast}\left(
f\right)  (y)\right\Vert \\
&  =\left\Vert e^{-(1-y_{0})}\right\Vert \left\Vert \sum_{i=1}^{n}f_{i}\left(
t,\frac{y_{1}}{1-y_{0}},\ldots,\frac{y_{n}}{1-y_{0}}\right)  \right.  \cdot\\
&  \cdot\left.  \left(  (1-y_{0})y_{i}\frac{\partial}{\partial y_{0}}
+(1-y_{0}-y_{i}^{2})\frac{\partial}{\partial y_{i}}-\sum
_{\substack{j=0\\j\notin\{0,i\}}}^{n}y_{i}y_{j}\frac{\partial}{\partial y_{j}
}\right)  \right\Vert \\
&  \leq e^{-(1-y_{0})} \cdot6n\cdot p\left(  \frac{y_{1}}{1-y_{0}}
,\ldots,\frac{y_{n}}{1-y_{0}}\right)  .
\end{align*}
This latter result implies that
\begin{equation}
\lim_{\substack{y\rightarrow y_{NP}\\y\in\mathbb{S}^{n}-\{y_{NP}\}}
}\varphi_{\ast}\left(  h\right)  (y)=0. \label{Eq:limit}
\end{equation}
Similarly, if we assume that
\[
\varphi_{\ast}\left(  h\right)  (y)=e^{-(1-y_{0})}\sum_{i=0}^{n}
s_{i}(t,y)\frac{\partial}{\partial y_{i}}
\]
where $s_{i}$ are functions, from (\ref{Eq:phi_star}) and from the assumption
that all the partial derivatives of $f$ of the form $\frac{\partial^{|k|}
f}{\partial t^{k_{0}}\partial x_{1}^{k_{1}}\ldots\partial x_{n}^{k_{n}}}$ are
polynomially bounded on $x_{1},\ldots,x_{n}$, we can conclude that each
partial derivative of $s_{i}$ is polynomially bounded by some polynomial
$\tilde{p}\left(  \frac{y_{1}}{1-y_{0}},\ldots,\frac{y_{n}}{1-y_{0}}\right)
$, which implies that
\[
\lim_{\substack{y\rightarrow y_{NP}\\y\in\mathbb{S}^{n}-\{y_{NP}\}}
}\frac{\partial^{|k|}\varphi_{\ast}\left(  h\right)  (y)}{\partial t^{k_{0}
}\partial y_{1}^{k_{1}}\ldots\partial y_{n}^{k_{n}}}=0.
\]
Therefore we can extend $\varphi_{\ast}\left(  h\right)  (y)$ to a
$C^{\infty }$ vector field $\tilde{g}$ defined on the entire sphere
$\mathbb{S}^{n}$ if we assume that the value of $\tilde{g}$ and of
its partial derivatives is $0$ at the north pole
$y_{NP}=(1,0,\ldots,0)$ of $\mathbb{S}^{n}.$ We thus have defined a
$C^{\infty}$ vector field $\tilde{g}$ on the entire sphere
$\mathbb{S}^{n}$, and $\tilde{g}$ is Turing universal.
\end{proof}

With the help of the above theorem, we may hope we could just make
use of the vector field $g_{M}$ of Theorem
\ref{Th:two-dim-ODE-Turing} as the vector field $f$ in Theorem
\ref{Th:compact_simulation} to prove that there is a Turing
universal vector field in $\mathbb{S}^{2}$. However, there are
several problems with this approach as listed below: (1) the
approach requires that $g_{M}(t, x_1, x_2)$ and all of its partial
derivatives are bounded by a constant on $t$ and are polynomially
bounded on $x_{1}$ and $x_{2}$. A major problem in this respect is
that $g_{M}$ uses in its expression the function $\Psi$ defined in
Lemma \ref{Lemma:Psi} that does not necessarily have polynomially
bounded derivatives because the function $\arcsin$ is used in the
definition of $\Psi$ and the derivative of $\arcsin$ is not
polynomially bounded as its argument approaches $-1$ or $1$. (2) The
expression of $g_{M}$ relies on the expression of the function
$\bar{g}_{M}$
given by Theorem \ref{Th:Simulation}. Thus one must show that $\bar{g}
_{M}(t,x_{1},x_{2},x_{3},y_{1},y_{2},y_{3})$ is bounded polynomially
on $x_{1},x_{2},x_{3},y_{1},y_{2},y_{3}$. (3) The argument of the functions $\Omega_{k,i}
:\mathbb{R\rightarrow R}$ from Proposition \ref{Prop:one-to-many} is
provided as the initial condition of an ODE. It is not
straightforward to analyze the dependence of $\Omega_{k,i}$ and of
its derivatives on its argument.

In the following we present the solutions to the listed problems.
First we note that in Theorem \ref{Th:compact_simulation} the vector
field is no longer required to be analytic (it only has to be
$C^{\infty}$). Therefore we can substitute the function $\Psi$
defined in Lemma \ref{Lemma:Psi} by the function
$r:\mathbb{R\rightarrow R}$ defined in Lemma \ref{Lem:r}. We recall
that the function $r$ has the property that $r(x)=n$, whenever
$x\in\lbrack n-1/4,n+1/4]$, for all integers $n$. Thus if we take
$\Psi(x,y)=r(x)$, the properties stated for $\Psi$ in Lemma
\ref{Lemma:Psi} remain true. Therefore, we can replace $\Psi$ with
$r$ when defining the vector field $g_{M}$ of Theorem
\ref{Th:two-dim-ODE-Turing}. In this case, the properties stated in
Theorem \ref{Th:two-dim-ODE-Turing}\ remain true with $g_M$ being a
$C^{\infty}$ function rather than an analytic function. However, we
gain the advantage that $r$ and its derivatives are polynomially
bounded as we shall show now. Indeed, it follows from its definition
that $\left\vert r(x)\right\vert \leq\left\vert x\right\vert +1\leq
x^{2}+2$ (recall that $\left\vert x\right\vert \leq x^{2}+1$). For
the derivatives of $\theta$, we note that if $a(x)=e^{-1/x}$, then
it is readily seen by induction that for each $n\in\mathbb{N}_{0}$
there is a polynomial $P_{n}$ such that
$a^{(n)}(x)=P_{n}(1/x)e^{-1/x}$, with $a^{(0)}(x)=a(x)$ (and thus
$P_{0}=1$). This implies that $\lim_{x\rightarrow0^{+}}a^{(n)}(x)=0$
and $\lim_{x\rightarrow+\infty}a^{(n)}(x)=K_{n}$, where $K_{n}$ is
the constant term of $P_{n}$. Therefore, by definition of the limit,
there exists $b_{n},\varepsilon_{n}>0$ such that $\left\vert
a^{(n)}(x)\right\vert \leq1$ whenever $x\in(0,\varepsilon_{n}]$ and
$\left\vert a^{(n)}(x)\right\vert \leq K_{n}+1$ whenever $x\geq
b_{n}$. Now set $M_{n}=\max_{x\in \lbrack\varepsilon,M]}\left\vert
a^{(n)}(x)\right\vert \mathbb{\in R}$. Then $\left\vert
a^{(n)}(x)\right\vert \leq\max(1,K_{n}+1,M_{n})$ for all
$x\in(0,+\infty)$, which implies that $\theta$ in (\ref{Eq:theta})
as well as its derivatives are polynomially bounded on
$[0,+\infty)$. Consequently, the function $v$ from (\ref{Def:r}) as
well as the derivatives of $v$ are polynomially bounded following
Lemmas \ref{Lem:Arithm} and \ref{Lem:Comp} to be presented in a
moment. Lemma \ref{Lem:r} together with Lemmas \ref{Lem:Arithm} and
\ref{Lem:Comp} then imply that $r$ as well as its derivatives are
polynomially bounded.

Some notations are in order for the statements of the next two
lemmas. Given
multi-indexes $\alpha=(\alpha_{1},\ldots,\alpha_{n}),\beta=(\beta_{1}
,\ldots,\beta_{n})\in\mathbb{N}_{0}^{n}$ and $x=(x_{1},\ldots,x_{n}
)\in\mathbb{R}^{n}$, let
\begin{align*}
\left\vert \alpha\right\vert  &  =\alpha_{1}+\ldots+\alpha_{n}\\
\alpha+\beta &  =(\alpha_{1}+\beta_{1},\ldots,\alpha_{n}+\beta_{n})\\
\alpha!  &  =(\alpha_{1}!)\cdot(\alpha_{2}!)\cdot\ldots\cdot(\alpha_{n}!)\\
\beta &  \leq\alpha\text{ iff }\beta_{1}\leq\alpha_{1},\ldots,\beta_{n}
\leq\alpha_{n}\\
\binom{\alpha}{\beta}  &  =\binom{\alpha_{1}}{\beta_{1}}\ldots\binom
{\alpha_{n}}{\beta_{n}}=\frac{\alpha!}{\beta!(\alpha-\beta)!}\text{ for }
\beta\leq\alpha\\
D_{x}^{\alpha}  &  =\frac{\partial^{\left\vert \alpha\right\vert }}{\partial
x_{1}^{\alpha_{1}}\ldots\partial x_{n}^{\alpha_{n}}}\text{ \ \ (}D_{x}
^{0}\text{ is the identity operator)}\\
x^{\alpha}  &  =x_{1}^{\alpha_{1}}\ldots x_{n}^{\alpha_{n}}
\end{align*}

\begin{lemma}
\label{Lem:Arithm}Suppose that $f,g:\mathbb{R}^{n}\rightarrow\mathbb{R}$ are
$C^{\infty}$ functions which are bounded by a constant on the variables
$x_{1},\ldots,x_{i}$ and are bounded by a polynomial on the variables
$x_{i+1},\ldots,x_{n}$, as well as all their partial derivatives. Then:

\begin{enumerate}
\item The $C^{\infty}$ function $f\pm g:\mathbb{R}^{n}\rightarrow\mathbb{R}$
defined by $(f+g)(x)=f(x)+g(x)$ is bounded by a constant on the variables
$x_{1},\ldots,x_{i}$ and bounded by a polynomial on the variables
$x_{i+1},\ldots,x_{n}$, as well as all its partial derivatives.

\item The $C^{\infty}$ function $f\times g:\mathbb{R}^{n}\rightarrow
\mathbb{R}$ defined by $(f\times g)(x)=f(x)\cdot g(x)$ is bounded by a
constant on the variables $x_{1},\ldots,x_{i}$ and bounded by a polynomial on
the variables $x_{i+1},\ldots,x_{n}$, as well as all its partial derivatives.
\end{enumerate}
\end{lemma}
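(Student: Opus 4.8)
The plan is to reduce everything to linearity of differentiation, the multivariate Leibniz rule, and the elementary fact that polynomials in a fixed set of variables form a ring. Recall what the hypothesis gives us: for $f$ and every multi-index $\alpha\in\mathbb{N}_{0}^{n}$ there is a polynomial $p_{\alpha}$ in the variables $x_{i+1},\ldots,x_{n}$ \emph{alone} with $\left\vert D_{x}^{\alpha}f(x)\right\vert\leq p_{\alpha}(x_{i+1},\ldots,x_{n})$ for all $x$ in the domain, and likewise a polynomial $q_{\alpha}$ for $g$; the phrase ``bounded by a constant on $x_{1},\ldots,x_{i}$'' means precisely that these bounding polynomials do not involve $x_{1},\ldots,x_{i}$.

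First I would settle part 1. Since differentiation is linear, $D_{x}^{\alpha}(f+g)=D_{x}^{\alpha}f+D_{x}^{\alpha}g$ for every $\alpha$, and hence
\[
\left\vert D_{x}^{\alpha}(f+g)(x)\right\vert\leq p_{\alpha}(x_{i+1},\ldots,x_{n})+q_{\alpha}(x_{i+1},\ldots,x_{n}),
\]
whose right-hand side is again a polynomial in $x_{i+1},\ldots,x_{n}$ with no dependence on $x_{1},\ldots,x_{i}$. As $\alpha$ is arbitrary, and as $f+g$ is plainly $C^{\infty}$, this proves part 1; the case $f-g$ is identical.

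Next, for the product I would invoke the multivariate Leibniz rule in the multi-index notation fixed just above,
\[
D_{x}^{\alpha}(f\times g)=\sum_{\beta\leq\alpha}\binom{\alpha}{\beta}\,(D_{x}^{\beta}f)\,(D_{x}^{\alpha-\beta}g),
\]
and bound each factor using the hypothesis to get
\[
\left\vert D_{x}^{\alpha}(f\times g)(x)\right\vert\leq\sum_{\beta\leq\alpha}\binom{\alpha}{\beta}\,p_{\beta}(x_{i+1},\ldots,x_{n})\,q_{\alpha-\beta}(x_{i+1},\ldots,x_{n}).
\]
The right-hand side is a \emph{finite} sum of products of polynomials in $x_{i+1},\ldots,x_{n}$, hence itself a polynomial in those variables and independent of $x_{1},\ldots,x_{i}$. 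Since $\alpha$ ranges over all multi-indices and $f\times g$ is $C^{\infty}$, part 2 follows.

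There is no genuine obstacle here: the whole content is the finiteness of the Leibniz expansion together with closure of the polynomial ring under sums and products. The only point deserving a word of care is purely notational --- keeping the multi-index bookkeeping in the Leibniz formula consistent with the definitions of $D_{x}^{\alpha}$ and $\binom{\alpha}{\beta}$ recorded above, and observing that the resulting bounding polynomials genuinely omit $x_{1},\ldots,x_{i}$, which is automatic because those of $f$ and $g$ do so by hypothesis.
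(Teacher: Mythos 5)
Your proposal is correct and follows essentially the same route as the paper's proof: linearity of $D_{x}^{\alpha}$ for the sum/difference and the multivariate Leibniz rule for the product, with the bounds combined using closure of polynomials in $x_{i+1},\ldots,x_{n}$ under sums and products. You simply spell out the bounding step slightly more explicitly than the paper does.
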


\begin{proof}
Let $\alpha=(\alpha_{1},\ldots,\alpha_{n})$ be a multi-index. For point 1, we
note that
\[
\frac{\partial^{\left\vert \alpha\right\vert }}{\partial x^{\alpha}}(f\pm
g)(x)=\frac{\partial^{\left\vert \alpha\right\vert }f(x)}{\partial x^{\alpha}
}\pm\frac{\partial^{\left\vert \alpha\right\vert }g(x)}{\partial x^{\alpha}}
\]
and thus the result follows immediately from the assumption.

For the product $f\times g$, the claim follows directly from the
general Leibniz rule for multivariate functions (see
e.g.~\cite[Proof of Lemma 2.6]{CS96}):
\[
\frac{\partial^{\left\vert \alpha\right\vert }}{\partial x^{\alpha}}(f\times
g)(x)=\sum_{0\leq\beta\leq\alpha}\binom{\alpha}{\beta}\frac{\partial
^{\left\vert \beta\right\vert }f(x)}{\partial x^{\beta}}\frac{\partial
^{\left\vert \alpha-\beta\right\vert }g(x)}{\partial x^{\alpha-\beta}}.
\]
\end{proof}

\begin{lemma}
\label{Lem:Comp}Suppose that $f:\mathbb{R}^{j}\rightarrow\mathbb{R}^{n}$ and
$g:\mathbb{R}^{k}\rightarrow\mathbb{R}^{j}$ are $C^{\infty}$ functions with
the following properties:

\begin{enumerate}
\item $f$ and its partial derivatives are polynomially bounded on its
arguments $z_{1},\ldots,z_{j}$;

\item $g$ and its partial derivatives are bounded by a constant on the
variables $x_{1},\ldots,x_{i}$ and bounded by a polynomial on the variables
$x_{i+1},\ldots,x_{k}$.
\end{enumerate}

Then $f\circ g$ is a $C^{\infty}$ function with the property that
$f\circ g$ as well as all its partial derivatives are bounded by a
constant on the variables $x_{1},\ldots,x_{i}$ and by a polynomial
on the variables $x_{i+1},\ldots,x_{k}$.
\end{lemma}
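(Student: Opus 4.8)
The plan is to prove Lemma~\ref{Lem:Comp} by reducing it to the Fa\`a di Bruno formula for higher-order partial derivatives of a composition, combined with the two facts already available to us: first, that polynomials composed with polynomials are polynomials (so polynomial bounds are stable under composition), and second, Lemma~\ref{Lem:Arithm}, which says that sums and products of functions having the ``bounded by a constant on $x_1,\dots,x_i$, polynomially bounded on $x_{i+1},\dots,x_k$'' property retain that property. The key observation is that a generic term in the multivariate Fa\`a di Bruno expansion of $D^{\alpha}_x(f\circ g)$ is a finite sum of finite products, each product consisting of one partial derivative of $f$ (evaluated at $g(x)$) times several partial derivatives of the components of $g$ (evaluated at $x$); hence each such term is manifestly of the form ``composition of a polynomially-bounded-on-all-arguments function $f$ with $g$'' times ``products of functions of the good mixed type,'' and we just need to check both pieces behave well.

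First I would record the $C^{\infty}$ claim, which is immediate since the composition of $C^{\infty}$ maps is $C^{\infty}$. Then I would set up notation for the Fa\`a di Bruno formula: for a multi-index $\alpha$ with $|\alpha|=m$,
\[
D^{\alpha}_x(f\circ g)(x)=\sum_{\lambda} c_{\lambda}\,\big(D^{\mu(\lambda)}_z f\big)(g(x))\,\prod_{\ell}\big(D^{\nu_\ell(\lambda)}_x g_{j_\ell(\lambda)}\big)(x),
\]
where $\lambda$ ranges over a finite index set depending only on $\alpha$, each $\mu(\lambda)$ is a multi-index in $\mathbb{N}_0^j$ with $|\mu(\lambda)|\le m$, each $\nu_\ell(\lambda)\in\mathbb{N}_0^k$, the product is finite, and $c_\lambda$ are combinatorial constants. (I would cite a standard reference for this formula, e.g.\ the multivariate Fa\`a di Bruno identity, in the same spirit as the Leibniz-rule citation already used in Lemma~\ref{Lem:Arithm}.) By hypothesis~1, every factor $\big(D^{\mu}_z f\big)$ is polynomially bounded in all $j$ of its arguments; since those arguments are $g_1(x),\dots,g_j(x)$, and each $g_r$ is by hypothesis~2 bounded by a constant on $x_1,\dots,x_i$ and by a polynomial $p_r(x_{i+1},\dots,x_k)$ on the rest, substituting these into a polynomial in $j$ variables again yields a function bounded by a constant on $x_1,\dots,x_i$ and by a polynomial on $x_{i+1},\dots,x_k$. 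By hypothesis~2 again, each factor $\big(D^{\nu_\ell}_x g_{j_\ell}\big)(x)$ has the same mixed-boundedness property. Finally, Lemma~\ref{Lem:Arithm} (applied finitely many times, once for each product and once for the outer sum) shows the whole expression $D^{\alpha}_x(f\circ g)$ is bounded by a constant on $x_1,\dots,x_i$ and by a polynomial on $x_{i+1},\dots,x_k$. Taking $\alpha=0$ handles $f\circ g$ itself, completing the proof.

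The main obstacle I anticipate is the single technical point that must be argued carefully rather than waved through: that substituting ``constant-on-$x_1,\dots,x_i$, polynomial-on-$x_{i+1},\dots,x_k$'' functions into a polynomial in several variables preserves that property. This is true but needs a word, because a polynomial in the $g_r$'s produces cross terms mixing the ``constant'' block and the ``polynomial'' block; one must note that a bound of the form $p(g_{i+1}(x),\dots)$ with the earlier $g_r$'s bounded by constants simply absorbs those constants into the (enlarged) polynomial $p$. Everything else is bookkeeping: the Fa\`a di Bruno expansion is a \emph{finite} sum of \emph{finite} products with constant coefficients, so no analytic estimates beyond what Lemma~\ref{Lem:Arithm} already provides are needed, and no growth in the degree of the bounding polynomials causes trouble since we never iterate the construction infinitely.
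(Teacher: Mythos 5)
Your proposal is correct and follows essentially the same route as the paper: both rest on the multivariate Fa\`a di Bruno formula for $D^{\alpha}_x(f\circ g)$, with the conclusion extracted from the hypotheses on $f$ and $g$ (the paper's own write-up is in fact terser, while you spell out the substitution-of-bounds step and the repeated use of Lemma \ref{Lem:Arithm} explicitly). No gaps.
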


\begin{proof}
\label{Lem-Faa}To prove this theorem, we will use a multivariate
version of the Fa\`{a} di Bruno formula which allows us to compute
the higher order partial derivatives of the composition of
multivariate functions $f$ and $g$. Some notational matter is in
order first.  Let
$\alpha=(\alpha_{1},\ldots,\alpha_{n})\in\mathbb{N}_{0}^{n}$ be a
multi-index. Then following the approach of \cite{Ma09}, let us
assume that $\chi^{0}=1$ regardless of whether $\chi$ is a number or
a differential operator. We say
that a multi-index $\alpha$ can be decomposed into $s$ parts $p_{1}
,\ldots,p_{s}\in\mathbb{N}_{0}^{n}$ with multiplicities $m_{1},\ldots,m_{s}
\in\mathbb{N}_{0}^{d}$ if the decomposition $\alpha=\left\vert m_{1}
\right\vert p_{1}+\ldots+\left\vert m_{s}\right\vert p_{s}$ holds and all
parts are different. In this case the total multiplicity is defined as
$m=m_{1}+\ldots+m_{s}$. The list $(s,p,m)$ is called a $d$-decomposition, or
simply just a decomposition, of $\alpha$. Then, assuming that $z=f\circ g(x)$
and $y=g(x)$, we have
\[
\frac{\partial^{\left\vert \alpha\right\vert }}{\partial x^{\alpha}}f\circ
g(x)=\alpha!\sum_{(s,p,m)\in\mathcal{D}}\frac{\partial^{\left\vert
m\right\vert }}{\partial y^{m}}f(g(x))\prod_{k=1}^{s}\frac{1}{m_{k}!}\left(
\frac{1}{p_{k}!}\frac{\partial^{\left\vert p_{k}\right\vert }}{\partial
x^{p_{k}}}g(x)\right)  f\circ g(x)
\]
where $\mathcal{D}$ is the set of all decompositions of $\alpha$. From this
later formula and the the hypothesis on $f$, $g$ we conclude that $f\circ g$
is a $C^{\infty}$ function with the property that $f\circ g$ as well as all
its partial derivatives are bounded by a constant on the variables
$x_{1},\ldots,x_{i}$ and are bounded by a polynomial on the variables
$x_{i+1},\ldots,x_{k}$.
\end{proof}

Since the function $f_{M}$ of Theorem \ref{Th:Main} can be written
using only the following terms: variables, polynomial-time
computable constants, $+$, $-$, $\times$, $\sin$, $\cos$, $\arctan$,
we conclude from Lemmas \ref{Lem:Arithm} and \ref{Lem:Comp} that
$f_{M}$ as well as all its partial derivatives are bounded by a
polynomial. Furthermore, as the function $g_{M}$ from Theorem
\ref{Th:one-dim-Turing-map} is obtained using the functions
$\Upsilon_{3},f_{M},\Omega_{3,1},\Omega_{3,2},\Omega_{3,3},\sigma$,
again by Lemmas \ref{Lem:Arithm} and \ref{Lem:Comp} it suffices to
show that
$\Upsilon_{3},\Omega_{3,1},\Omega_{3,2},\Omega_{3,3},\sigma$ as well
as their partial derivatives are (or can be made) bounded by a
polynomial. The case of the function $\sigma$ in Lemma
\ref{Lemma:sigma} is immediate from Lemmas \ref{Lem:Arithm} and
\ref{Lem:Comp}. The case of $\Upsilon_{3}$ can be treated by
replacing $\Psi(x,y)$ in the expression of $\Upsilon_{3}$ given in
the proof of Proposition \ref{Prop:many-to-one} by the $C^{\infty}$
function $r$, as explained above. Then it follows immediately that
$\Upsilon_{3}$ maintains its properties given by Proposition
\ref{Prop:many-to-one}, except analiticity ($\Upsilon_{3}$ will only
be $C^{\infty}$) and, meanwhile, Lemmas \ref{Lem:Arithm} and
\ref{Lem:Comp} imply that $\Upsilon_{3}$ and its partial derivatives
are polynomially bounded.

The situation for $\Omega_{3,1},\Omega_{3,2},\Omega_{3,3}$ is more subtle, as
the arguments to these functions are passed as initial conditions of an ODE.
What we are going to do is to create new $C^{\infty}$ functions $\overline{\Omega}
_{3,1},\overline{\Omega}_{3,2},\overline{\Omega}_{3,3}$ such that
these new functions maintain the useful properties of
$\Omega_{3,1},\Omega_{3,2},\Omega_{3,3}$ on the one hand and, on the
other hand, the new functions together with their partial
derivatives are polynomially bounded. Once this is done, the old
functions $\Omega_{3,1},\Omega_{3,2},\Omega_{3,3}$  can then be
replaced by the new functions
$\overline{\Omega}_{3,1},\overline{\Omega}_{3,2},\overline{\Omega}_{3,3}$
in the expression of $g_{M}$ in the proof of Theorem
\ref{Th:one-dim-Turing-map}. The function $g_M$ as well as its
partial derivatives are now polynomially bounded on all variables
except the time $t$. But since the time variable only appears inside
the functions $\phi_1$ and $\phi_2$ defined by (\ref{SistemaCont})
in the format of $\sin (2\pi t)$ as defined in (\ref{Eq:Def_s}), and
$\sin (2\pi t)$ and its derivatives are obviously bounded by a
constant, it follows that the theorem below holds true.

\begin{theorem}
\label{Th:2Dcompact-extension}Let $\psi:\mathbb{N}\rightarrow\mathbb{N}$ be
the transition function of a Turing machine $M$, under the encoding described
in Section \ref{Sec:Turing-map}. Then there exist $0<\eta<2/5$ and a
$C^{\infty}$ function $g_{M}:\mathbb{R}^{3}\rightarrow\mathbb{R}^{2}$ such
that the ODE $z^{\prime}=g_{M}(t,z)$ simulates $M$ in the following sense: for
all $x_{0}\in\mathbb{N}$ which encodes a configuration according to the
encoding described above, if $\bar{x}_{0},\bar{y}_{0}\in\mathbb{R}$ satisfy
the conditions $\left\Vert \bar{x}_{0}-x_{0}\right\Vert \leq1/5$ and
$\left\Vert \bar{y}_{0}-x_{0}\right\Vert \leq1/5$, then the solution $z(t)$
of
\[
z^{\prime}=g_{M}(t,z),\qquad z(0)=(\bar{x}_{0},\bar{y}_{0})
\]
satisfies, for all $j\in\mathbb{N}_{0}$ and for all $t\in\lbrack j,j+1/2]$,
\[
\left\Vert z_{2}(t)-\psi^{\lbrack j]}(x_{0})\right\Vert \leq\eta,
\]
where $z(t)\equiv(z_{1}(t),z_{2}(t))\in\mathbb{R}^{2}$. Furthermore
$g_{M}(t,z_{1},z_{2})$ and its partial derivatives are polynomially bounded on
$z_{1}$ and $z_{2}$ and bounded by a constant on $t$.
\end{theorem}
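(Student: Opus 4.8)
The plan is to keep the two-dimensional ODE of Theorem~\ref{Th:two-dim-ODE-Turing} essentially unchanged, performing only the two substitutions already anticipated above: replace the analytic function $\Psi$ of Lemma~\ref{Lemma:Psi} by the $C^{\infty}$ function $r$ of Lemma~\ref{Lem:r} wherever $\Psi$ occurs, and replace $\Omega_{3,1},\Omega_{3,2},\Omega_{3,3}$ of Proposition~\ref{Prop:one-to-many} by new $C^{\infty}$ functions $\overline{\Omega}_{3,1},\overline{\Omega}_{3,2},\overline{\Omega}_{3,3}$ that retain the needed error-correcting property while being, in addition, polynomially bounded together with all their partial derivatives. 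Since $r(x)=k$ exactly when $|x-k|\le 1/4$, the function $r$ satisfies the inequality $|r(x)-k|\le e^{-y}|x-k|$ invoked in the proof of Proposition~\ref{Prop:many-to-one}, so $\Upsilon_{3}$ keeps its properties; and the proofs of Theorems~\ref{Th:one-dim-Turing-map} and~\ref{Th:two-dim-ODE-Turing} use only the abstract approximation statements of $\Upsilon_{3},\Omega_{3,i},f_{M}$, never their explicit formulas. Hence, once $\overline{\Omega}_{3,i}$ is in hand, the simulation bound $\|z_{2}(t)-\psi^{[j]}(x_{0})\|\le\eta$ on $[j,j+1/2]$ follows verbatim as in Theorem~\ref{Th:two-dim-ODE-Turing} (taking $\delta=0$ there), the only change being that the new $g_{M}$ is $C^{\infty}$ rather than analytic.

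The crux of the proof, and the step I expect to be the main obstacle, is the construction of $\overline{\Omega}_{3,i}$. The original $\Omega_{3,i}$ of Proposition~\ref{Prop:one-to-many} is obtained by running the ODE of Theorem~\ref{Th:Simulation} for a time $P(x+1)$ from an initial condition built from $\varphi_{1}(x)=2^{\Psi(x,x+2)}-1$; both its values and its derivatives grow faster than any polynomial, so it is unusable here and must be rebuilt from scratch. I would fix a $C^{\infty}$ bump $\beta:\mathbb{R}\to[0,1]$ with $\beta(u)=1$ for $|u|\le 1/4$ and $\beta(u)=0$ for $|u|\ge 1/2$ (such a $\beta$ is readily built from the function $\theta$ of~(\ref{Eq:theta}) and may be taken computable; as it has compact support, $\beta$ and each of its derivatives are bounded by constants), and set
\[
\overline{\Omega}_{3,i}(x)=\sum_{n\in\mathbb{N}}J_{3,i}(n)\,\beta(x-n).
\]
Because the translates of the support of $\beta$ are pairwise disjoint, at most one summand is nonzero at any point, so this is a genuine $C^{\infty}$ function whose $k$th derivative is $\sum_{n}J_{3,i}(n)\,\beta^{(k)}(x-n)$. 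If $|x-n|\le 1/5$ for some $n\in\mathbb{N}$ then $\beta(x-n)=1$, so $\overline{\Omega}_{3,i}(x)=J_{3,i}(n)$ and the conclusion of Proposition~\ref{Prop:one-to-many} holds (in fact with equality). For the growth bound, the inequality $J_{3,i}(n)\le I_{3}(J_{3,1}(n),J_{3,2}(n),J_{3,3}(n))=n$ gives, for any $x$ in the support of the $n$th term, $|\overline{\Omega}_{3,i}(x)|\le n\,\|\beta\|_{\infty}\le(|x|+1)\,\|\beta\|_{\infty}$ and likewise $|\overline{\Omega}_{3,i}^{(k)}(x)|\le(|x|+1)\,\|\beta^{(k)}\|_{\infty}$; outside all supports both sides vanish. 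Thus $\overline{\Omega}_{3,i}$ and all its derivatives are polynomially bounded.

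Finally I would assemble the estimates. By the discussion preceding the theorem, $\theta$, $v$, $r$, $\sigma$, and $\Upsilon_{3}$ (with $\Psi$ replaced by $r$) are polynomially bounded together with all their derivatives; by Lemmas~\ref{Lem:Arithm} and~\ref{Lem:Comp} so is $f_{M}$ of Theorem~\ref{Th:Main}, being a finite composition of variables, polynomial-time computable constants, $+,-,\times,\sin,\cos,\arctan$ (each of $\sin,\cos,\arctan$ is bounded by a constant). Consequently the one-dimensional map $g_{M}$ of Theorem~\ref{Th:one-dim-Turing-map}, now built from $\Upsilon_{3},f_{M},\overline{\Omega}_{3,1},\overline{\Omega}_{3,2},\overline{\Omega}_{3,3},\sigma$ by composition, is polynomially bounded with all its derivatives by the same two lemmas. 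In the two-dimensional ODE~(\ref{Eq:Simulation})--(\ref{SistemaCont}) the variable $t$ enters only through $s(t)$ of~(\ref{Eq:Def_s}) inside $\phi_{1},\phi_{2}$, and $\sin(2\pi t)$ together with all its derivatives is bounded by a constant; so one more application of Lemmas~\ref{Lem:Arithm} and~\ref{Lem:Comp} shows that the resulting $g_{M}(t,z_{1},z_{2})$ and all of its partial derivatives are bounded by a constant on $t$ and polynomially bounded on $z_{1},z_{2}$. (The same bump-sum is $C^{1}$-computable, which is what Theorem~\ref{Th:compact_simulation} will later require.) This yields Theorem~\ref{Th:2Dcompact-extension}.
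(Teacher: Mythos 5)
Your proposal is correct, and it coincides with the paper's proof in all but the central step, where it takes a genuinely different route. Like the paper, you substitute $r$ for $\Psi$ everywhere, keep the architecture of Theorems \ref{Th:one-dim-Turing-map} and \ref{Th:two-dim-ODE-Turing} untouched (their proofs indeed use only the abstract approximation properties), and reduce the boundedness claims to Lemmas \ref{Lem:Arithm} and \ref{Lem:Comp} together with the observation that $t$ enters only through $\sin(2\pi t)$. The divergence is in how the polynomially bounded replacements for $\Omega_{3,i}$ are built: the paper constructs $\tilde{\Omega}_{2,i}$ as the coordinate $x_{2}$ of the auxiliary four-dimensional ODEs (\ref{Eq:J_2_1})--(\ref{Eq:J_2_2}), which re-enact the dovetailing enumeration diagonal by diagonal, then invokes Lemma \ref{Lem:BoundedODEs} to bound all derivatives of the solution, post-composes with $\sigma$, and passes from $k=2$ to $k=3$ inductively; you instead define $\overline{\Omega}_{3,i}(x)=\sum_{n}J_{3,i}(n)\beta(x-n)$ directly, obtaining the required interpolation property with zero error and the polynomial bound from $J_{3,i}(n)\le I_{3}(J_{3,1}(n),J_{3,2}(n),J_{3,3}(n))=n\le |x|+1$ on the support of the $n$th term. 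Your route is shorter and more elementary: it avoids Lemma \ref{Lem:BoundedODEs}, the analysis of (\ref{Eq:J_2_1})--(\ref{Eq:J_2_2}), and the induction on $k$, and the derivative bounds are immediate by termwise differentiation; what the paper's route buys is that the construction stays inside the ODE-simulation toolkit it has already developed, but since the theorem only needs a $C^{\infty}$, polynomially bounded (and, for the subsequent use in Theorem \ref{Th:compact_simulation}, $C^{1}$-computable) right-hand side, nothing is lost, and your parenthetical that the bump sum is $C^{1}$-computable is correct (near any point at most finitely many terms are relevant, and $J_{3,i}$ and $\beta$ are computable). One cosmetic remark: the translated closed supports of $\beta$ touch at the points $n+1/2$, so they are not literally pairwise disjoint, but since $\beta$ and all its derivatives vanish there, your conclusions (at most one nonzero summand, smoothness, termwise derivative bounds) are unaffected.
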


\begin{theorem}
Let $M$ be a Turing machine. Then one can compute from $f$ a $C^{\infty}$
vector field $F$ defined on $\mathbb{S}^{2}$ which also simulates $M$.
\end{theorem}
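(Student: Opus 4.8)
The plan is to deduce the statement immediately from Theorems~\ref{Th:2Dcompact-extension} and~\ref{Th:compact_simulation}. Given the Turing machine $M$, I would first apply Theorem~\ref{Th:2Dcompact-extension} to obtain a $C^{\infty}$ function $g_{M}:\mathbb{R}^{3}\rightarrow\mathbb{R}^{2}$ such that the ODE $z'=g_{M}(t,z)$ simulates $M$ in the sense recorded there and, crucially, such that $g_{M}(t,z_{1},z_{2})$ together with all of its partial derivatives is bounded by a constant on $t$ and polynomially bounded on $z_{1}$ and $z_{2}$. With $n=2$, this is exactly the hypothesis that Theorem~\ref{Th:compact_simulation} requires of its vector field $f$: the first coordinate $t$ plays the role of the parameter-like variable, and the remaining two coordinates play the role of $x_{1},\ldots,x_{n}$.

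Before invoking Theorem~\ref{Th:compact_simulation} I would check the one remaining hypothesis, namely that the ODE $z'=g_{M}(t,z)$ is $C^{1}$-computable. This is straightforward: $g_{M}$ is built by finitely many arithmetic operations and compositions from the functions $\Upsilon_{3}$, $f_{M}$, $\overline{\Omega}_{3,1},\overline{\Omega}_{3,2},\overline{\Omega}_{3,3}$, $\sigma$, and $r$ (the latter already used in place of $\Psi$), all of which are $C^{1}$-computable with computable derivatives; hence $g_{M}$ and its first-order partial derivatives are computable, so the ODE is $C^{1}$-computable. One also notes that $g_{M}$ is produced effectively from a description of $M$, so all the objects below depend computably on $M$.

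Finally, I would apply Theorem~\ref{Th:compact_simulation} with $f=g_{M}$ and $n=2$. Its proof pushes $g_{M}$ forward by the inverse stereographic projection $\varphi$, reparametrizes time by the positive factor $K(x)=e^{-2/(1+r^{2})}$ to cancel the growth of $\varphi_{\ast}(g_{M})$ near the north pole, uses the constant-on-$t$, polynomial-on-$z_{1},z_{2}$ bounds to conclude that the reparametrized field $\varphi_{\ast}(h)$ and all of its partial derivatives tend to $0$ at $y_{NP}$, and then extends by $0$ there to obtain a $C^{\infty}$ vector field $F$ on all of $\mathbb{S}^{2}$. Since the reparametrization $\tau$ is a strictly increasing computable diffeomorphism with computable inverse, the $j$th step of the computation of $M$, readable on some interval $[a_{j},b_{j}]$ for $z'=g_{M}(t,z)$, can be read on $[\tau^{-1}(a_{j}),\tau^{-1}(b_{j})]$ for the flow of $F$; thus $F$ simulates $M$ and is computable from $g_{M}$, hence from $M$. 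I do not anticipate a genuine obstacle here, since the substantive work was already carried out in Theorems~\ref{Th:2Dcompact-extension} and~\ref{Th:compact_simulation}; the only point demanding care is the bookkeeping that matches the variables and growth bounds of $g_{M}$ to the precise hypotheses of Theorem~\ref{Th:compact_simulation}, and this is immediate by the way Theorem~\ref{Th:2Dcompact-extension} was phrased.
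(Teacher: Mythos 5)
Your proposal is correct and follows exactly the paper's route: the paper proves this theorem in one line, as an immediate consequence of Theorems \ref{Th:compact_simulation} and \ref{Th:2Dcompact-extension}, which is precisely the deduction you carry out (with the hypothesis-checking made a bit more explicit).
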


\begin{proof}
Immediate from Theorems \ref{Th:compact_simulation} and
\ref{Th:2Dcompact-extension}.
\end{proof}

It remains to show that the new $C^{\infty}$ functions
$\overline{\Omega}_{3,1},\overline{\Omega}_{3,2},\overline{\Omega}_{3,3}$
can be constructed such that they retain the useful properties of $\Omega_{3,1},\Omega_{3,2}
,\Omega_{3,3}$, but these new functions and their partial
derivatives are polynomially bounded. We begin with a preliminary
lemma.

\begin{lemma}
\label{Lem:BoundedODEs}Let
$f:\mathbb{R}^{n+1}\rightarrow\mathbb{R}^{n}$ be a $C^{\infty}$
function such that $f$ and its partial derivatives are polynomially
bounded. Suppose that $x_{f}:\mathbb{R\rightarrow R}$ is the first
coordinate of a solution $x$ of the IVP
\[
x^{\prime}=f(t,x).
\]
If $x$ is polynomially bounded, then all the
derivatives of $x_{f}$ are polynomially bounded.
\end{lemma}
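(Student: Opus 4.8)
The plan is to prove the stronger statement that the whole solution vector $x=(x_1,\ldots,x_n)$ satisfies: $x^{(k)}$ is polynomially bounded for every $k\in\mathbb{N}_0$. Taking the first coordinate then gives the assertion for $x_f=x_1$. Since $f$ is $C^{\infty}$ the solution $x$ is $C^{\infty}$, so all the derivatives $x^{(k)}$ exist, and I would argue by induction on $k$. The base case $k=0$ is precisely the hypothesis that $x$ is polynomially bounded.

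For the inductive step, I would differentiate the identity $x'(t)=f(t,x(t))$ exactly $k$ times. Writing $g(t)=(t,x_1(t),\ldots,x_n(t))$, so that $x'(t)=(f\circ g)(t)$, and iterating the chain rule (equivalently, the Fa\`a di Bruno formula, cf.\ the proof of Lemma \ref{Lem:Comp}), one expresses $x^{(k+1)}(t)=\frac{d^{k}}{dt^{k}}f(t,x(t))$ as a finite sum of terms, each a product of (i) one partial derivative $\frac{\partial^{|\gamma|}f}{\partial t^{\gamma_0}\partial y_1^{\gamma_1}\ldots\partial y_n^{\gamma_n}}$ evaluated at $(t,x(t))$ with $|\gamma|\le k$, and (ii) finitely many factors $x_i^{(j)}(t)$ with $1\le j\le k$. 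Crucially, only $g',\ldots,g^{(k)}$ — hence only $x',\ldots,x^{(k)}$ — enter the $k$-th derivative of $f\circ g$, so no factor $x_i^{(k+1)}$ shows up on the right-hand side, and the induction does not become circular.

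It remains to bound each factor. Every factor of type (ii) is polynomially bounded by the induction hypothesis. For a factor of type (i), the hypothesis on $f$ supplies a polynomial $p_\gamma$ with $\left\Vert\frac{\partial^{|\gamma|}f}{\partial t^{\gamma_0}\partial y_1^{\gamma_1}\ldots\partial y_n^{\gamma_n}}(s,y)\right\Vert\le p_\gamma(s,y)$; substituting $s=t$, $y=x(t)$ and bounding $|t|$ and each $|x_i(t)|$ by a polynomial in $|t|$ (possible because $x$ is polynomially bounded) yields a bound by a polynomial in $|t|$. Since finite sums and products of polynomially bounded functions are again polynomially bounded (Lemma \ref{Lem:Arithm}), $x^{(k+1)}$ is polynomially bounded, completing the induction; in particular $x_f^{(k)}=x_1^{(k)}$ is polynomially bounded for all $k$.

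The main obstacle is the bookkeeping in the second step: one has to be certain both that the $k$-th derivative of $f(t,x(t))$ involves derivatives of $x$ only up to order $k$ and that the partial derivatives of $f$ occurring there have order at most $k$. Both facts are immediate from the Fa\`a di Bruno expansion, with the inner map $g$ having the identity as its first component and $x_1,\ldots,x_n$ as the remaining components. This is also precisely why one cannot simply invoke Lemma \ref{Lem:Comp} as a black box: that lemma requires the derivatives of the inner map (here the $x_i^{(j)}$) to be polynomially bounded already, which is exactly what is being proved. Unwinding the formula and inserting the induction hypothesis at each stage circumvents this, and everything else is routine.
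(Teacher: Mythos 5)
Your argument is correct, and it proves the statement by a slightly different organization of the same chain-rule induction. You do strong induction directly on the time-derivatives of the solution: you expand $x^{(k+1)}(t)=\frac{d^{k}}{dt^{k}}f(t,x(t))$ via the multivariate Fa\`a di Bruno formula into partial derivatives of the \emph{original} $f$ of order at most $k$, evaluated at $(t,x(t))$, multiplied by derivatives $x_i^{(j)}$ with $j\le k$, and then close using the induction hypothesis together with closure of polynomial boundedness under sums and products. The paper instead carries the induction at the level of auxiliary functions of the independent variables: it defines $f^{k}(t,y)$ recursively by $f^{k+1}=\partial_t f^{k}+\sum_j(\partial_{y_j}f^{k})\,f_j$, shows (via Lemmas \ref{Lem:Arithm} and \ref{Lem:Comp}) that each $f^{k}$ \emph{and all of its partial derivatives} are polynomially bounded, and only at the end evaluates $x^{(k)}(t)=f^{k}(t,x(t))$, which requires nothing more than the polynomial bound on $x$ itself. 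The trade-off: the paper's invariant is stronger (it must track all partial derivatives of the $f^{k}$) but it never needs bounds on $x',\ldots,x^{(k)}$ and avoids explicit Fa\`a di Bruno bookkeeping; your route avoids introducing the $f^{k}$ altogether but must verify the combinatorial facts that only partial derivatives of $f$ of order $\le k$ and derivatives of $x$ of order $\le k$ occur, which you do. Your observation that Lemma \ref{Lem:Comp} cannot be invoked as a black box with inner map $t\mapsto(t,x(t))$ — since the derivatives of $x$ are exactly what is being proved bounded — is accurate, and is precisely the circularity that the paper's recursion through the $f^{k}$ is designed to sidestep.
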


\begin{proof}
We show the result by induction on the order of the derivative by showing that
$x^{(k)}=f^{k}(t,x(t))$, where $f^{k}$ is a $C^{\infty}$ function which is
polynomially bounded, as well as all its partial derivatives. Then we will
conclude that $x^{(k)}$ is polynomially bounded as well as all its partial
derivatives by Lemma \ref{Lem:Comp}. The base case
\[
x^{\prime}(t)=f(t,x(t))
\]
is trivial. Let us now assume that $x^{(k)}=f^{k}(t,x(t))$, where $f^{k}$ is
polynomially bounded as well as all its partial derivatives. Then
\begin{align*}
x_{i}^{(k+1)}(t)  &  =\left(  f_{i}^{k}(t,x(t))\right)  ^{\prime}\\
&  =\frac{\partial f_{i}^{k}}{\partial t}(t,x(t))+\sum_{j=1}^{n}\frac{\partial
f_{i}^{k}}{\partial y_{j}}(t,x(t))\frac{dx_{j}}{dt}(t)\\
&  =\frac{\partial f_{i}^{k}}{\partial t}(t,x(t))+\sum_{j=1}^{n}\frac{\partial
f_{i}^{k}}{\partial y_{j}}(t,x(t))f_{j}^{k}(t,x(t))\\
&  =f_{i}^{k+1}(t,x(t)).
\end{align*}
By taking $f^{k+1}=(f_{1}^{k+1},\ldots,f_{n}^{k+1})$, we conclude that
$x^{(k+1)}=f^{k+1}(t,x(t))$ and by Lemmas \ref{Lem:Arithm} and \ref{Lem:Comp}
we conclude that $f^{k+1}$ is polynomially bounded as well as all its partial
derivatives, thus showing the result.
\end{proof}

To define new $C^{\infty}$ functions
$\overline{\Omega}_{3,1},\overline
{\Omega}_{3,2},\overline{\Omega}_{3,3}$ as in Proposition
\ref{Prop:one-to-many}, we recall that the key point of the proof of
Proposition \ref{Prop:one-to-many} was to consider the bijection
$I:\mathbb{N}^{2}\rightarrow\mathbb{N}$ given by (\ref{Eq:bijection-dim2-dim1}
) and to obtain real extensions of the components $J_{2,1}$ and $J_{2,2}$
which form the inverse function of $I$, i.e.~$I^{-1}(z)=(J_{2,1}
(z),J_{2,2}(z))$. Then the result would follow inductively when
obtaining extensions $\Omega_{k,i}$ from $J_{k,i}$ for $k>2$.
Subsequently, the only required modification is to obtain suitable
real extensions $\overline {\Omega}_{2,1},\overline{\Omega}_{2,2}$
of $J_{2,1},J_{2,2}$, respectively. We shall demand that if
$n\in\mathbb{N}_{0}$, then $\left\vert \overline{\Omega
}_{2,i}(z)-J_{2,i}(n)\right\vert \leq1/5$ whenever $\left\vert
z-n\right\vert \leq1/4$ for $i=1,2$, so that
$\overline{\Omega}_{2,i}$ has the same properties as of
$\Omega_{2,i}$ regarding Proposition \ref{Prop:one-to-many}, except
that $\overline{\Omega}_{2,i}$ is $C^{\infty}$ instead of analytic.
We also require that $\overline{\Omega}_{2,i}$ and its derivatives
are polynomially bounded. To obtain $\overline{\Omega}_{2,i}$, we
first construct a $C^{\infty}$ function $\tilde{\Omega}_{2,i}$ with
the property that $\left\vert
\tilde{\Omega}_{2,i}(z)-J_{2,i}(n)\right\vert \leq1/4$ whenever
$z\in\lbrack n,n+1/2]$ for $i=1,2$, and $\tilde{\Omega}_{2,i}$ as
well as its partial derivatives are polynomially bounded. By setting
$\overline{\Omega}_{2,i}(z)=\sigma
\circ\tilde{\Omega}_{2,i}(z+1/4)$, where $\sigma$ is given by Lemma
\ref{Lemma:sigma}, we conclude from the above and from Lemmas
\ref{Lem:Arithm} and \ref{Lem:Comp} that $\overline{\Omega}_{2,i}$
has the desired properties. Before defining $\tilde{\Omega}_{2,1}$
and $\tilde{\Omega}_{2,2}$, we note that $I$ is obtained by
dovetailing by enumerating the pairs in the diagonals below from the
(one element) diagonal starting on $(0,0)$ and then moving to the
next diagonal

\begin{center}
\begin{tikzcd}
    (0,0) & (0,1) \arrow[ld] & (0,2) \arrow[ld]& (0,3) \arrow[ld]& \ldots\\
    (1,0) & (1,1) \arrow[ld] & (1,2) \arrow[ld] & \ldots&  \\
    (2,0) & (2,1) \arrow[ld] & \ldots& & \\
    (3,0) & \ldots& & &
\end{tikzcd}
\end{center}

\noindent In other words, we have $I(0,0)=0,$ $I(0,1)=1$,
$I(1,0)=2$, $I(0,2)=3$, and so on. We note that the sum of the
coordinates in each diagonal is constant. From here we see that the
graphs for $J_{2,1}$ and $J_{2,2}$, provided in Figures
\ref{fig:J_2_1} and \ref{fig:J_2_2}, respectively, have certain
regularities which will be explored to obtain
$\tilde{\Omega}_{2,1}$ and $\tilde{\Omega}_{2,2}$.
\begin{figure}
    \begin{center}
        \includegraphics[width=12cm]{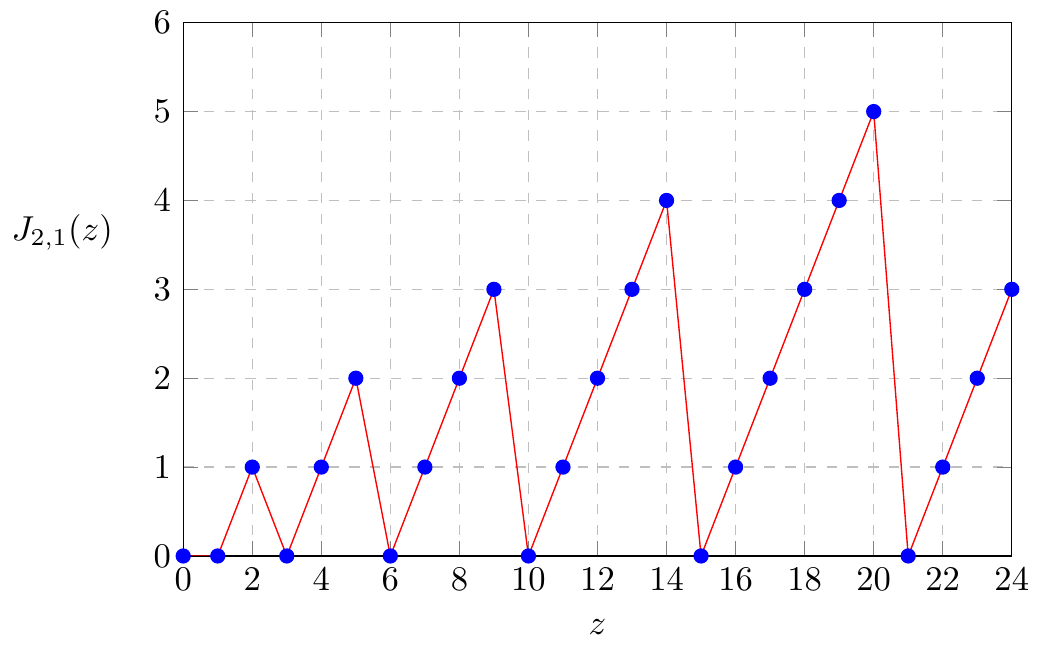}
        \caption{Graph of the function $J_{2,1}$.
Since the function is discrete, the image are only the blue points (the red line is given as a visualization helper).}
        \label{fig:J_2_1}
    \end{center}
\end{figure}
\begin{figure}
    \begin{center}
        \includegraphics[width=12cm]{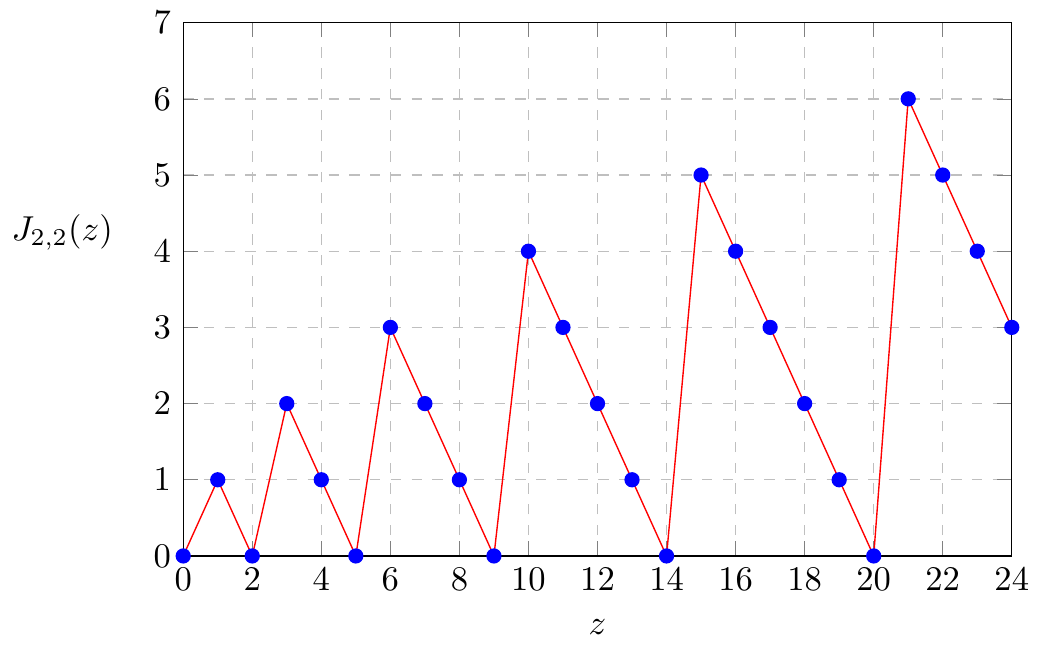}
        \caption{Graph of the function $J_{2,2}$.
Since the function is discrete, the image are only the blue points (the red line is given as a visualization helper).}
        \label{fig:J_2_2}
    \end{center}
\end{figure}
Let us start with the case of $\tilde{\Omega}_{2,1}$. We first analyze the
behavior of $J_{2,1}$. Let us suppose that the argument $z$ of $J_{2,1}(z)$
codes a pair $(0,n)$ at the start of the diagonal with sum $n$. Then
$J_{2,1}(z)=0$, $J_{2,1}(z+1)=1,$ \ldots, $J_{2,1}(z+n)=n$, $J_{2,1}
(z+n+1)=0$, $J_{2,1}(z+n+2)=1,$ \ldots\ Thus to simulate $J_{2,1}$ we need to
track the sum $s$ of the diagonal, and increase it by one when $J_{2,1}(z)$
reaches the value of $s$. On the next value $z+1$, we will have that
$J_{2,1}(z+1)$ will take the value $0$, and each time its argument $z$
increases by one, $J_{2,1}(z)$ also increases by one, until it reaches the
(new) value of the sum of the diagonal and the cycle repeats itself. We will
simulate this behavior with ODEs. Before showing how this can be done,
consider the auxiliary function $\xi:\mathbb{R\rightarrow R}$ defined as
\[
\xi^{\prime}(x)=\left\{
\begin{array}
[c]{ll}
0 & \text{if }x\leq1/4\\
c_{\xi}\theta(-(x-1/4)(x-3/4))\text{ \ \ } & \text{if }1/4<x<3/4\\
0 & \text{if }x\geq3/4
\end{array}
\right.
\]
where $\xi(x)=0$ and $c_{\xi}=\left(  \int_{1/4}^{3/4}\theta
(-(x-1/4)(x-3/4))dx\right)  ^{-1}$. It is not difficult to see that
$\theta(-(x-1/4)(x-3/4))>0$ when $1/4<x<3/4$. Hence we have that $\xi(x)=0$
whenever $x\leq1/4$, $0<\xi(x)<1$ when $1/4<x<3/4$ and $\xi(x)=1$ when
$x\geq3/4$. Furthermore $\xi$ is $C^{\infty}$ and $\xi$ and all its
derivatives are polynomially bounded due to Lemmas \ref{Lem:Arithm} and
\ref{Lem:Comp}.

Let us now present the ODE which will define $\tilde{\Omega}_{2,1}$
\begin{equation}
\left\{
\begin{array}
[c]{l}
x_{1}^{\prime}=\tilde{c}(\xi(r(s_{2})-r(x_{2}))(1+r(x_{2}))-x_{1})^{3}
\theta(\sin2\pi t)\\
x_{2}^{\prime}=\tilde{c}(r(x_{1})-x_{1})^{3}\theta(-\sin2\pi t)\\
s_{1}^{\prime}=\tilde{c}(r(s_{2})+\xi(r(x_{2})+1-r(s_{2}))-s_{1})^{3}
\theta(\sin2\pi t)\\
s_{2}^{\prime}=\tilde{c}(r(s_{1})-s_{1})^{3}\theta(-\sin2\pi t)
\end{array}
\right.  \label{Eq:J_2_1}
\end{equation}
with $x_{1}(0)=x_{2}(0)=s_{1}(0)=s_{2}(0)$ and $\tilde{c}=206$. The behavior
of the ODE (\ref{Eq:J_2_1})\ is similar to the one of (\ref{iteration}). The
variable updates are done on alternating time intervals. The variable $x_{2}$
stores the value of the function $J_{2,1}$ on time intervals with the format
$[k,k+1/2]$, i.e.~we will have $\left\vert z_{2}(t)-J_{2,1}\right\vert
\leq1/4$ whenever $t\in\lbrack k,k+1/2]$, with $k\in\mathbb{N}_{0}$. The
variable $s_{2}$ will give the current sum of the diagonal on time intervals
with the format $[k,k+1/2]$. We first update the variables $x_{1}$ and $s_{1}$
on time intervals $[k,k+1/2]$ to be able to use the \textquotedblleft
memorized\textquotedblright\ values of $x_{2}$ and $s_{2}$ when updating
$x_{1}$ and $s_{1}$. We note that $x_{1}$ must be increased by one unit from
its previous value (stored on $x_{2}$) until it reaches the value of the sum
of the diagonal, which is stored in $s_{2}$. On that moment we will have
$\xi(r(s_{2})-r(x_{2}))=0$ on the equation for $x_{1}^{\prime}$ (if the value
of $x_{2}$ is less than the value of the sum of the diagonal stored in $s_{2}
$, then $\xi(r(s_{2})-r(x_{2}))=1$ and $x_{1}$ is incremented by one) and
$x_{1}$ will be reset to the value $0$ starting the cycle again. The analysis
for $s_{1}$ is similar: its value will be essentially constant as long as
$\xi(r(x_{2})+1-r(s_{2}))=0$, which happens when $r(s_{2})-1\geq r(x_{2})$.
When $r(x_{2})=$ $r(s_{2})$, we will have $\xi(r(x_{2})+1-r(s_{2}))=1$ and
$s_{1}$ will be incremented by one from its previous value. We can see that
the ODE (\ref{Eq:J_2_1}) behaves as desired and that we can take
$\tilde{\Omega}_{2,1}(t)=x_{2}(t)$. Since the right-hand sides of
(\ref{Eq:J_2_1}) are polynomially bounded as well as their derivatives and
$(x_{1},x_{2},s_{1},s_{2})$ is also polynomially bounded, implying by Lemma
\ref{Lem:BoundedODEs} that $\tilde{\Omega}_{2,1}$ and all its derivatives are
polynomially bounded.

The case for $J_{2,2}$ is similar. Let us suppose that the argument $z$ of
$J_{2,2}(z)$ codes a pair $(0,n)$ at the start of the diagonal with sum $n$.
Then $J_{2,2}(z)=n$, $J_{2,2}(z+1)=n-1,$ \ldots, $J_{2,2}(z+n)=0$,
$J_{2,2}(z+n+1)=n+1$, $J_{2,2}(z+n+2)=n,$ \ldots\ Thus to simulate $J_{2,2}$
we need again to track the sum $s$ of the diagonal, but now we need to
increase it by one when $J_{2,1}(z)$ reaches the value $0$. On the next value
$z+1$, we will have that $J_{2,1}(z+1)$ will take the value $s+1$, and each
time its argument $z$ increases by one, $J_{2,1}(z)$ decreases by one, until
it reaches $0$ and the cycle repeats itself. This behavior can be simulated in
a similar way to (\ref{Eq:J_2_1}) by the following ODE
\begin{equation}
\left\{
\begin{array}
[c]{l}
x_{1}^{\prime}=\tilde{c}(\xi(x_{2})(r(x_{2})-1)+\xi(1-x_{2})(1+r(s_{2}
))-x_{1})^{3}\theta(\sin2\pi t)\\
x_{2}^{\prime}=\tilde{c}(r(x_{1})-x_{1})^{3}\theta(-\sin2\pi t)\\
s_{1}^{\prime}=\tilde{c}(\xi(x_{2})(r(s_{2})+\xi(1-x_{2})(r(s_{2}
)+1)-s_{1})^{3}\theta(\sin2\pi t)\\
s_{2}^{\prime}=\tilde{c}(r(s_{1})-s_{1})^{3}\theta(-\sin2\pi t)
\end{array}
\right.  \label{Eq:J_2_2}
\end{equation}

We can do an analysis to (\ref{Eq:J_2_2}) similar to the one of
(\ref{Eq:J_2_1}) to conclude that we can take $\tilde{\Omega}_{2,2}
(t)=x_{2}(t)$ on (\ref{Eq:J_2_2}). This concludes the proof of Theorem
\ref{Th:2Dcompact-extension}.

\section{Can one-dimensional ODEs simulate Turing
machines?\label{Sec:ODE-one-dimension}}

As we have seen in the previous section, analytic two-dimensional ODEs can
robustly simulate Turing machines. But what about one-dimensional ODEs? In
this section we show that no one-dimensional autonomous ODE can simulate a
universal Turing machine under some reasonable conditions.

First let us give a more precise meaning to the notion of an ODE simulating a
Turing machine. Let $M$ be a Turing machine. Since ODEs are defined on
$\mathbb{R}^{k}$, to simulate the Turing machine $M$ with an ODE we first need
to encode a configuration of $M$ as a point of $\mathbb{R}^{k}$. However,
since the coding of a configuration might not be unique, as it happens in the
previous sections, we map each configuration to a \emph{set of possible
encodings of that configuration}. Hence we have to consider a map $\chi$ which
maps configurations of $M$ into non-empty subsets of $\mathbb{R}^{k}$. Then
given a configuration $c_{M}$, any point of $\chi(c_{M})$ is assumed to
represent the configuration $c_{M}$. In this manner we can consider the case
when $c_{M}$ is represented by a single point in $\mathbb{R}^{k}$ (when
$\chi(c_{M})$ is a singleton) or when $c_{M}$ is represented by several points
of $\mathbb{R}^{k}$. For example, in Theorem \ref{Th:one-dim-Turing-map} we
have assumed that any point in a $1/5$-vicinity of $I_{3}(y_{1},y_{2},q)$,
where $y_{1}$ and $y_{2}$ are given by (\ref{Eq:ConfigEncoding}) represents
the configuration $c_{M}$ which is encoded by $I_{3}(y_{1},y_{2},q)$, i.e.
\[
\chi(c_{M})=\{x\in\mathbb{R}:\left\vert x-I_{3}(y_{1},y_{2},q)\right\vert
\leq1/5\}.
\]
Note that it makes sense to assume that if $c_{M}$ and $c_{M}^{\prime}$ are
distinct configurations, then $\chi(c_{M})\cap\chi(c_{M}^{\prime}
)=\varnothing$. However, this assumption may be too weak, since even
if $\chi(c_{M})\cap\chi(c_{M}^{\prime})=\varnothing$ nothing
prevents e.g.~that $\chi(c_{M})$ and $\chi(c_{M}^{\prime})$ are
fractal (e.g.~Cantor-like) sets which are intermingled and thus very
hard to separate in practice. To avoid such undesirable instances we
impose a natural separation-condition on $\chi$ so that
$\chi(c_{M})$ and $\chi(c_{M}^{\prime})$ are separated by disjoint
open subsets of $\mathbb{R}^k$. More precisely, let
$\{c_{i}\}_{i\in\mathbb{N}}$ denote all configurations of a given
Turing machine $M$ (recall that a Turing machine has at most
countably many configurations).
Then we assume that there are two computable maps
$a:\mathbb{N\rightarrow Q}^{k}$, $r:\mathbb{N\rightarrow Q}$ such
that for all $i\in\mathbb{N}$,
$\chi(c_{i})\subseteq\overline{B(a(i),r(i))}=\{x\in\mathbb{R}^{k}:\left\Vert
x-a(i)\right\Vert \leq r(i)\}$ and, moreover, if $i\neq j$ then
$\overline {B(a(i),r(i))}\cap\overline{B(a(j),r(j))}=\varnothing$.

Now we say that the
ODE
\begin{equation}
y^{\prime}=f(y), \label{Eq:simulation}
\end{equation}
where $f:\mathbb{R}^{k}\rightarrow\mathbb{R}^{k}$, simulates a
Turing machine with the coding $\chi$ if given an arbitrary
configuration $c_{0}$ of $M$ and some point $y_{0}\in\chi(c_{0})$
one has that the solution $y$ to (\ref{Eq:simulation}) with initial
condition $y(0)=y_{0}$ satisfies $y(n)\in\chi(\psi^{\lbrack
n]}(c_{0}))$ for all $n\in\mathbb{N}$, where $\psi$ is the
transition function of $M$. In the following, we show that no
one-dimensional ODE can simulate a universal Turing machine under
the separation-condition.

\begin{theorem}
Let $M$ be a universal Turing machine. Then no ODE $y^{\prime
}=f(y)$ can simulate $M$ in the sense explained above, where
$f:\mathbb{R\rightarrow R}$ is a computable function with only
isolated zeros.
\end{theorem}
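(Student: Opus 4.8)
The plan is to exploit the rigidity of one‑dimensional autonomous ODEs: every solution of $y'=f(y)$ is monotone in $t$, so a trajectory is an interval of $\mathbb{R}$ traversed in a single direction and can never return to a region it has already left. This rules out the recurrent behaviour that a universal machine must display.

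\textbf{Step 1 (monotonicity of scalar autonomous solutions).} First I would prove the classical lemma that if $f:\mathbb{R}\to\mathbb{R}$ is continuous and $y$ solves $y'=f(y)$ on an interval, then $y$ is monotone there. Since $y\in C^{1}$ (as $y'=f\circ y$ is continuous), a violation of monotonicity yields an interior extremum at some $t^{\ast}$ with value $m:=y(t^{\ast})$ and $f(m)=y'(t^{\ast})=0$; because $f$ has \emph{only isolated zeros}, $f$ has a constant sign on a one‑sided punctured neighbourhood of $m$, and following $y$ as it first attains the level $m$ and then leaves it forces $f$ to be simultaneously positive and negative on the same side of $m$ (equivalently, forces $y'$ to be unbounded near a point where $y'=0$), a contradiction. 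This is the only place the ``isolated zeros'' hypothesis is genuinely needed; in particular no Lipschitz/uniqueness assumption is used, which matters because a merely computable $f$ need not be Lipschitz.

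\textbf{Step 2 (the codes are linearly ordered along any orbit).} Suppose, for contradiction, that $y'=f(y)$ simulates $M$ with a coding $\chi$ and computable data $a,r$ as in the statement. Fix a configuration $c_{0}$, a point $y_{0}\in\chi(c_{0})\neq\varnothing$, and a solution $y$ with $y(0)=y_{0}$; by hypothesis $y$ is defined for all $t\geq0$ and $y(n)\in\chi(\psi^{[n]}(c_{0}))\subseteq B_{n}:=\overline{B(a(i_{n}),r(i_{n}))}$, where $\psi^{[n]}(c_{0})=c_{i_{n}}$ and the $B_{i}$ are pairwise disjoint closed intervals of $\mathbb{R}$. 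By Step 1 the solution $y$ is monotone, say non‑decreasing; then if $c_{n}\neq c_{n+1}$ the interval $B_{n}$ lies entirely to the left of $B_{n+1}$, while if $c_{n}=c_{n+1}$ (i.e. $c_{n}$ is a $\psi$‑fixed configuration) one has $B_{n}=B_{n+1}$ and the orbit's codes are constant from then on. In particular, \emph{no orbit of $M$ can be eventually periodic with period $\geq2$}.

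\textbf{Step 3 (contradiction with universality) and the main obstacle.} A universal machine can simulate a small Turing machine whose head bounces forever between two cells (a loop of period $2$); running $M$ on the code of that machine, $M$'s own computation is eventually periodic of some period $p\geq2$, so there is a configuration $c_{0}$ with $\psi^{[p]}(c_{0})=c_{0}$ and $c_{0},\dots,c_{p-1}$ pairwise distinct. Taking $y_{0}\in\chi(c_{0})$ and a solution $y$ with $y(0)=y_{0}$, both $y(0)$ and $y(p)$ lie in the interval $B_{0}$ (since $\psi^{[p]}(c_{0})=c_{0}$), so monotonicity gives $y(1)\in[\min\{y(0),y(p)\},\max\{y(0),y(p)\}]\subseteq B_{0}$; but $y(1)\in B_{1}$ and $B_{0}\cap B_{1}=\varnothing$ because $c_{1}\neq c_{0}$ — a contradiction, so no such $f$ exists. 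The delicate point is the input chosen in Step 3: one must know that a universal machine really does have a configuration on a cycle of length $\geq2$ — standard universal machines do, since they simulate looping machines faithfully enough that their own configurations become eventually periodic of period $\geq2$, and this is what the ``reasonable assumptions'' refer to. (An alternative is to convert Step 2 into a decision procedure for $M$'s halting problem: along any orbit the codes move strictly in one direction until they freeze at a halting configuration; but making that procedure terminate requires excluding the possibility that the orbit's intervals cluster, with $r(i_{n})\to0$, just below the halting interval, which is exactly where monotonicity and the computability of $f$ would have to be combined more carefully. The cycle argument above sidesteps this issue completely.)
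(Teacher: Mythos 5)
Your Steps 1 and 2 are correct and clean: solutions of a scalar autonomous ODE are monotone (in fact this holds for any continuous $f$, so the isolated-zeros hypothesis is not really what Step 1 needs), and monotonicity plus the disjointness of the balls $\overline{B(a(i),r(i))}$ does rule out any simulated orbit that revisits a configuration after having left it. The genuine gap is in Step 3. Your contradiction requires that $M$ possess a configuration $c_0$ with $\psi^{[p]}(c_0)=c_0$ for some $p\geq 2$ and $\psi(c_0)\neq c_0$, and this does not follow from universality. A universal machine simulating a two-cell bouncing machine need not itself become periodic: many universal constructions consume unbounded space even on bounded simulations (step counters, shifting or growing encodings of the simulated tape, tag-system style simulations), so no configuration ever recurs; and there exist Turing machines with no periodic configurations at all, so periodicity cannot be extracted from universality as such. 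Your remark that this is ``what the `reasonable assumptions' refer to'' misreads the paper: those assumptions are the computability of $f$, the isolated-zeros condition, and the computable separation data $a,r$ for the coding $\chi$ --- not any assumption about periodic orbits of $M$. A telling symptom is that your argument never uses the computability of $f$ or of $a,r$ at all, whereas these hypotheses are exactly what the theorem is calibrated to.

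The paper's proof takes a different route, which is essentially the one you sketch parenthetically and set aside: it derives a contradiction with the undecidability of the halting problem. Because halting configurations are fixed by $\psi$, $f$ must vanish somewhere in the ball $B_k$ around the (unique, after normalizing $M$) halting configuration; because $f$ is computable with isolated zeros, that zero --- and the zeros bounding the maximal interval $I$ of points that can flow into $B_k$ --- are computable reals. One then decides whether $M$ halts on $w$ by testing whether $a_{i_w}$ lies in $I$ (with finitely many hard-coded exceptional cases when an endpoint of $I$ meets some $B_j$), contradicting undecidability. Your cycle argument, where it applies, is more elementary and even dispenses with computability, but as written it proves a different statement (no scalar autonomous ODE can simulate a machine having a periodic non-fixed configuration) rather than the theorem, which quantifies over all universal machines. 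To repair it you would either have to prove that the particular universal $M$ at hand has a period-$\geq 2$ configuration (an extra hypothesis), or carry out the halting-problem reduction, i.e., the clustering issue you flagged is where the computability of $f$ and of the separation data must be brought in, as the paper does.
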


\begin{proof}
Let $M$ be a universal Turing machine. We may assume that it has
only one halting state and  it cleans its tape immediately before it
halts. This implies that $M$ has only one halting configuration
$c_{k}$, $k\in\mathbb{N}$; moreover, the problem of deciding whether
$M$ halts on input $w$, $w\in\Sigma^{\ast }=\{0,1\}^{\ast}$, is
undecidable.

Let us assume, by contradiction, that there is an ODE
(\ref{Eq:simulation}) which simulates $M$, where
$f:\mathbb{R}\rightarrow\mathbb{R}$ is a computable function. Then
$f$ must admit a zero in $B_{k}=\overline
{B(a(k),r(k))}=[a_{k}-r_{k},a_{k}+r_{k}]$ where $a_{k}=a(k)$ and
$r_{k}=r(k)$. Assume otherwise that this is not the case. Then since
computable functions are continuous, it must be either $f(x)<0$ for
all $x\in B_{k}$ or $f(x)>0$ for all $x\in B_{k}$. Moreover, since
$B_{k}$ is compact, it follows that $\min_{x\in B_{k}}\left\vert
f(x)\right\vert =\delta>0$. As a result, any solution starting on
$B_{k}$ must leave it in time $\leq2r_{k}/\delta$ and never return
to $B_{k}$ afterwards (note that a solution of (\ref{Eq:simulation})
is a continuous function which must move continuously along the real
line). But this is impossible because $\psi^{\lbrack
n]}(c_{k})=c_{k}$ for all $n\in\mathbb{N}$ and (\ref{Eq:simulation})
simulates $M.$ Hence $B_{k}$ must contain at least one zero $x_{k}$,
which is computable because $f$ is computable and the zeros of $f$
are isolated (it is well-known that isolated zeros of computable
functions are computable. See e.g.~\cite[Theorem 7.8]{BHW08}).

Let $w$ be some input with the property that $M$ halts on $w$, and
suppose that the initial configuration associated to $w$ is
$c_{i_{w}}$. Then $c_{i_{w}}\neq c_{k}$ (note that in an universal
Turing machine the initial state cannot be an halting state). Hence,
if $y_{0}\in\chi(c_{i_{w}})$, then $y_{0}\notin B_{k}$. Let us
assume without loss of generality that $y_{0}<a_{k}-r_{k}$. We note
that the solution of the IVP (\ref{Eq:simulation}) with $y(0)=y_{0}$
must reach $B_{k}$ because $M$ halts on input $w$. This implies that
$f(x)>0$ for all $x\in\lbrack y_{0},a_{k}-r_{k})$. There are two
cases to to be considered:

\begin{enumerate}
\item $f(x)>0$ for all $x\in(-\infty,a_{k}-r_{k})$. In this case, let
$I=(-\infty,a_{k}+r_{k}]$.

\item $f(\bar{x})=0$ for some $\bar{x}\in(-\infty,a_{k}-r_{k})$. In this
case,
we must have $\bar{x}<y_{0}$, which implies that  $d=\min\{\left\vert x-y_{0}
\right\vert :x\in\lbrack\bar{x},y_{0}]$ and $f(x)=0\}>0$. By
continuity of $f$ there is some $\tilde{x}\in\lbrack\bar{x},y_{0}]$
such that $f(\tilde {x})=0$ and $\left\vert
\tilde{x}-y_{0}\right\vert =d$. In this case, let
$I=(\tilde{x},a_{k}+r_{k}]$ (note that $\tilde{x}$ is computable
because it is an isolated zero of a computable function).
\end{enumerate}

If there is a word $w$ such that $M$ halts on input $w$ with the
property that there is some $z\in\chi(c_{i_{w}})$ satisfying
$a_{k}<z$, we repeat the above procedure on the half line
$[a_{k},+\infty)$ obtaining $I:=I\cup\lbrack a_{k}+r_{k},+\infty)$
provided that $f(x)<0$ for all $x\in(a_{k}+r_{k},+\infty)$ or
$I:=I\cup\lbrack a_{k}+r_{k},\tilde{x})$ provided that $f(x)=0$ for
some $x\in(a_{k}+r_{k},+\infty)$, where $\tilde{x}$ is obtained
similarly as in the previous case.

From the arguments above, we conclude that $M$ halts on word $w$ iff
$\chi(c_{i_{w}})\subseteq I$. We will use this fact to show that the
halting problem is decidable, a contradiction. Let us assume,
without loss of generality that $I=(\tilde{x}_{1},\tilde{x}_{2})$
(the cases where one or more extremities of $I$ are unbounded is
dealt with similarly). Suppose first that
for all $i\in\mathbb{N}$, $\{\tilde{x}_{1},\tilde{x}_{2}\}\cap B_{i}
=\varnothing$. Then to decide whether $M$ halts on input $w$ proceed as
follows. Given the initial configuration $c_{i_{w}}$ associated to input $w$,
test whether $a_{i_{w}}\in I$ by testing whether $\tilde{x}_{1}<a_{i_{w}
}<\tilde{x}_{2}$. Notice that, since $\{\tilde{x}_{1},\tilde{x}_{2}\}\cap
B_{i_{w}}=\varnothing$, this test can be done in finite time. If the test
suceeds, then accept $w$ otherwise reject it.

Let us now suppose that $\tilde{x}_{1}\in B_{j_{1}}$ and $\tilde{x}_{2}\in
B_{j_{2}}$ (the cases where: (i) $\tilde{x}_{1}\in B_{j_{1}}$, and for all
$i\in\mathbb{N}$ $\tilde{x}_{2}\notin B_{i}$ or (ii) $\tilde{x}_{2}\in
B_{j_{2}\text{,}}$ and for all $i\in\mathbb{N}$ $\tilde{x}_{1}\notin B_{i}$
could be treated similarly). We can then \textquotedblleft
wire\textquotedblright\ into the above algorithm for deciding the halting
problem the correct answers for the two special cases when $i_{w}=j_{1}$ or
$i_{w}=j_{2}$. More specifically, given the initial configuration $c_{i_{w}}$
associated to input $w$, test if $i_{w}=j_{1}$. If the test $i_{w}=j_{1}$
suceeds, then accept (reject) if $M$ (does not, respectively) halts starting
on configuration $c_{j_{1}}$. Otherwise, test if $i_{w}=j_{2}$. If the test
$i_{w}=j_{2}$ suceeds, then accept (reject) if $M$ (does not, respectively)
halts starting on configuration $c_{j_{2}}$. If both tests fail, test whether
$a_{i_{w}}\in I$ by testing whether $\tilde{x}_{1}<a_{i_{w}}<\tilde{x}_{2}$.
Notice that in this case it must be $\{\tilde{x}_{1},\tilde{x}_{2}\}\cap
B_{i_{w}}=\varnothing$, and thus this test can be done in finite time. If the
test suceeds, then accept $w$ otherwise reject it.

In other words, if the ODE (\ref{Eq:simulation}) simulates $M$, then we can
decide the halting problem, a contradiction.
\end{proof}\medskip

\textbf{Acknowledgments.} D.~Gra\c{c}a wishes to thank Marco Mackaaij and Nenad Manojlovi\'{c} for helpful discussions.

\bibliographystyle{alpha}
\bibliography{ContComp}

\begin{thebibliography}{CMPSP21}

\bibitem[AB01]{AB01}
E.~Asarin and A.~Bouajjani.
\newblock Perturbed {T}uring machines and hybrid systems.
\newblock In {\em Proc. 16th Annual IEEE Symposium on Logic in Computer
  Science}, pages 269--278, 2001.

\bibitem[BC08]{BC08}
O.~Bournez and M.~L. Campagnolo.
\newblock A survey on continuous time computations.
\newblock In S.B. Cooper, B.~L{\"o}we, and A.~Sorbi, editors, {\em New
  Computational Paradigms. Changing Conceptions of What is Computable}, pages
  383--423, New York, 2008. Springer-Verlag.

\bibitem[BGH13]{BGH13}
O.~Bournez, D.~S. Gra{\c{c}}a, and E.~Hainry.
\newblock Computation with perturbed dynamical systems.
\newblock {\em Journal of Computer and System Sciences}, 79(5):714--724, 2013.

\bibitem[BGP12]{BGP12}
O.~Bournez, D.~S. Gra\c{c}a, and A.~Pouly.
\newblock On the complexity of solving initial value problems.
\newblock In {\em Proc. 37h International Symposium on Symbolic and Algebraic
  Computation (ISSAC 2012)}, volume abs/1202.4407, 2012.

\bibitem[BHW08]{BHW08}
V.~Brattka, P.~Hertling, and K.~Weihrauch.
\newblock A tutorial on computable analysis.
\newblock In S.~B. Cooper, , B.~L{\"{o}}we, and A.~Sorbi, editors, {\em New
  Computational Paradigms: Changing Conceptions of What is Computable}, pages
  425--491. Springer, 2008.

\bibitem[Bou99]{Bou99}
O.~Bournez.
\newblock Achilles and the {Tortoise} climbing up the hyper-arithmetical
  hierarchy.
\newblock {\em Theoretical Computer Science}, 210(1):21--71, 1999.

\bibitem[BP21]{BP18}
O.~Bournez and A.~Pouly.
\newblock {\em A Survey on Analog Models of Computation}, pages 173--226.
\newblock Springer International Publishing, Cham, 2021.

\bibitem[Bra95]{Bra95}
M.~S. Branicky.
\newblock Universal computation and other capabilities of hybrid and continuous
  dynamical systems.
\newblock {\em Theoretical Computer Science}, 138(1):67--100, 1995.

\bibitem[Bra05]{Bra05}
M.~Braverman.
\newblock Computational complexity of euclidean sets: hyperbolic {J}ulia sets
  are poly-time computable.
\newblock In V.~Brattka, L.~Staiger, and K.~Weihrauch, editors, {\em Proc. 6th
  Workshop on Computability and Complexity in Analysis (CCA 2004)}, volume 120
  of {\em Electronic Notes in Theoretical Computer Science}, pages 17--30.
  Elsevier, 2005.

\bibitem[Cam02]{Cam02b}
M.~L. Campagnolo.
\newblock {\em Computational Complexity of Real Valued Recursive Functions and
  Analog Circuits}.
\newblock PhD thesis, Instituto Superior T\'{e}cnico/Universidade T\'{e}cnica
  de Lisboa, 2002.

\bibitem[CG08]{CG08}
P.~Collins and D.~S. Gra{\c{c}}a.
\newblock Effective computability of solutions of ordinary differential
  equations the thousand monkeys approach.
\newblock In V.~Brattka, R.~Dillhage, T.~Grubba, and A.~Klutsch, editors, {\em
  Proc. 5th International Conference on Computability and Complexity in
  Analysis (CCA 2008)}, volume 221 of {\em Electronic Notes in Theoretical
  Computer Science}, pages 103--114. Elsevier, 2008.

\bibitem[CMC00]{CMC00}
M.~L. Campagnolo, C.~Moore, and J.~F. Costa.
\newblock Iteration, inequalities, and differentiability in analog computers.
\newblock {\em Journal of Complexity}, 16(4):642--660, 2000.

\bibitem[CMPS21]{CMP21}
Robert Cardona, Eva Miranda, and Daniel Peralta-Salas.
\newblock {Turing Universality of the Incompressible Euler Equations and a
  Conjecture of Moore}.
\newblock {\em International Mathematics Research Notices}, 08 2021.
\newblock rnab233.

\bibitem[CMPSP21]{CMPP21}
R.~Cardona, E.~Miranda, D.~Peralta-Salas, and F.~Presas.
\newblock Constructing turing complete euler flows in dimension 3.
\newblock {\em Proceedings of the National Academy of Sciences}, 118(19), 2021.

\bibitem[CS96]{CS96}
G.~M. Constantine and T.~H. Savits.
\newblock A multivariate faa di bruno formula with applications.
\newblock {\em Trans. Amer. Math. Soc.}, 348(2), 1996.

\bibitem[GCB08]{GCB08}
D.~S. Gra{\c{c}}a, M.~L. Campagnolo, and J.~Buescu.
\newblock Computability with polynomial differential equations.
\newblock {\em Advances in Applied Mathematics}, 40(3):330--349, 2008.

\bibitem[GZB09]{GZB07}
D.~S. Gra{\c{c}}a, N.~Zhong, and J.~Buescu.
\newblock Computability, noncomputability and undecidability of maximal
  intervals of {IVP}s.
\newblock {\em Transactions of the American Mathematical Society},
  361(6):2913--2927, 2009.

\bibitem[KCG94]{KCG94}
P.~Koiran, M.~Cosnard, and M.~Garzon.
\newblock Computability with low-dimensional dynamical systems.
\newblock {\em Theoretical Computer Science}, 132:113--128, 1994.

\bibitem[KM99]{KM99}
P.~Koiran and C.~Moore.
\newblock Closed-form analytic maps in one and two dimensions can simulate
  universal {T}uring machines.
\newblock {\em Theoretical Computer Science}, 210(1):217--223, 1999.

\bibitem[Ko91]{Ko91}
K.-I Ko.
\newblock {\em Complexity Theory of Real Functions}.
\newblock Birkh{\"{a}}user, 1991.

\bibitem[Koi96]{Koi96}
P.~Koiran.
\newblock A family of universal recurrent networks.
\newblock {\em Theoretical Computer Science}, 168(2):473--480, 1996.

\bibitem[KP05]{KP05}
O.~Kurganskyy and I.~Potapov.
\newblock Computation in one-dimensional piecewise maps and planar
  pseudo-billiard systems.
\newblock In {\em Proc. 4th International Conference on Unconventional
  Computation (UC 2005)}, volume 3699 of {\em Lecture Notes in Computer
  Science}, pages 169--175. Springer, 2005.

\bibitem[Ma09]{Ma09}
T.-W. Ma.
\newblock Higher chain formula proved by combinatorics.
\newblock {\em The Electronic Journal of Combinatorics}, 16(1), 2009.

\bibitem[Mat93]{Mat93}
Y.~Matiyasevich.
\newblock {\em Hilbert's 10th Problem}.
\newblock The {MIT} Press, 1993.

\bibitem[MO98]{MO97}
W.~Maass and P.~Orponen.
\newblock On the effect of analog noise in discrete-time analog computations.
\newblock {\em Neural Computation}, 10(5):1071--1095, 1998.

\bibitem[Moo91]{Moo91}
C.~Moore.
\newblock Generalized shifts: unpredictability and undecidability in dynamical
  systems.
\newblock {\em Nonlinearity}, 4(2):199--230, 1991.

\bibitem[Mos10]{Mos10}
Y.~Moschovakis.
\newblock Kleene's amazing second recursion theorem.
\newblock {\em The Bulletin of Symbolic Logic}, 16:189--239, 2010.

\bibitem[Odi89]{Odi89}
P.~Odifreddi.
\newblock {\em Classical Recursion Theory}, volume~1.
\newblock Elsevier, 1989.

\bibitem[Sip12]{Sip12}
M.~Sipser.
\newblock {\em Introduction to the Theory of Computation}.
\newblock Cengage Learning, 3rd edition, 2012.

\bibitem[Tur37]{Tur36}
A.~M. Turing.
\newblock On computable numbers, with an application to the
  entscheidungsproblem.
\newblock {\em Proceedings of the London Mathematical Society},
  s2-42(1):230--265, 1937.

\end{thebibliography}

\end{document}